\documentclass[a4paper,10pt,reqno]{amsart}

\usepackage[english]{babel}
\usepackage[utf8]{inputenc}
\usepackage{amsmath,amssymb,amsfonts,amsthm,amscd}
\usepackage[mathscr]{eucal}
\usepackage{mathtools}

\usepackage{enumitem}
\setlist[itemize]{topsep=0pt,leftmargin=2em}
\setlist[enumerate]{topsep=0pt,leftmargin=2em}

\usepackage{tikz-cd}
\usepackage{hyperref}
\usepackage{booktabs,subcaption,dcolumn,multirow}
\newcolumntype{d}[1]{D..{#1}}

\let\oldtocsection=\tocsection
\let\oldtocsubsection=\tocsubsection

\renewcommand{\tocsection}[2]{\hspace{0em}\oldtocsection{#1}{#2}}
\renewcommand{\tocsubsection}[2]{\hspace{1em}\oldtocsubsection{#1}{#2}}

\newcommand{\R}{\mathbb{R}}
\newcommand{\C}{\mathbb{C}}

\newcommand{\df}{\mathrm{d}}
\newcommand{\Nil}{\mathrm{Nil}_3}
\DeclareMathOperator{\Iso}{Iso}

\DeclareMathOperator{\arctanh}{arctanh}
\DeclareMathOperator{\SO}{SO}
\DeclareMathOperator{\OO}{O}
\DeclareMathOperator{\SU}{SU}
\DeclareMathOperator{\SL}{SL}
\DeclareMathOperator{\trace}{tr}
\DeclareMathOperator{\adj}{adj}
\DeclareMathOperator{\Ric}{Ric}
\DeclareMathOperator{\Rot}{Rot}
\DeclareMathOperator{\Stab}{Stab}

\newcommand{\E}{\mathbb{E}}

\newcommand{\X}{\mathfrak{X}}
\newcommand{\id}{\mathrm{id}}

\newtheorem{theorem}{Theorem}[section]
\newtheorem{proposition}[theorem]{Proposition}
\newtheorem{corollary}[theorem]{Corollary}
\newtheorem{lemma}[theorem]{Lemma}

\theoremstyle{definition}
\newtheorem{definition}{Definition}[section]

\theoremstyle{remark}
\newtheorem{remark}[theorem]{Remark}
\newtheorem{example}[theorem]{Example}

\numberwithin{equation}{section}

\setlength{\parindent}{1em}
\linespread{1.1}

\title[Isometric immersions into unimodular metric Lie groups]{Isometric immersions into three-dimensional unimodular metric Lie groups}
\date{}

\author{Ildefonso Castro}
\address{Departamento de Matem\'aticas, Universidad de Ja\'en, 23071 Ja\'en SPAIN}
\email{icastro@ujaen.es}

\author{Jos\'e M. Manzano}
\address{Departamento de Matem\'aticas, Universidad de Ja\'en, 23071 Ja\'en SPAIN}
\email{jmprego@ujaen.es}

\author{Jos\'e S. Santiago}
\address{Departamento de Matem\'aticas, Universidad de Ja\'en, 23071 Ja\'en SPAIN}
\email{jssantia@ujaen.es}

\subjclass[2020]{Primary 53C42; Secondary 53A10}

\keywords{Isometric immersions, fundamental equations, metric Lie groups, left-invariant Gauss map, constant mean curvature, Lawson correspondence, Daniel correspondence}

\selectlanguage{english}

\begin{document}
	
	\begin{abstract}
	We study isometric immersions of surfaces into simply connected $3$-dimensional unimodular Lie groups endowed with either Riemannian or Lorentzian left-invariant metrics, assuming that Milnor's operator is diagonalizable in the Lorentzian case. We provide global models in coordinates for all these \emph{metric Lie groups} that depend analytically on the structure constants and establish some fundamental theorems characterizing such immersions. In this sense, we study up to what extent we can recover the immersion from (a) the tangent projections of the natural left-invariant ambient frame, (b) the left-invariant Gauss map, and (c) the shape operator. In particular, we show that an isometric immersion is determined by its left-invariant Gauss map up to certain well controlled \emph{angular companions}. We also classify totally geodesic surfaces and introduce four Lorentzian analogues of the Daniel correspondence within two families of Lorentzian homogeneous $3$-manifolds with $4$-dimensional isometry group. We also classify isometric immersions in $\mathbb{R}^3$ or $\mathbb{S}^3$ whose left-invariant Gauss maps differ by a direct isometry of $\mathbb{S}^2$. Finally, we show that Daniel's is the furthest extension of the classical Lawson correspondence for constant mean curvature surfaces in Riemannian unimodular metric Lie groups. 
	\end{abstract}
	
	\maketitle

\tableofcontents

\section{Introduction}

It is a foundational problem in classical submanifold theory to decide if a given Riemannian manifold $M$ admits an isometric immersion $\phi:M\to\overline{M}$ into another Riemannian manifold $\overline{M}$ of higher dimension. The usual way of addressing this question is by finding geometric \emph{fundamental data} that $M$ inherits from $\overline{M}$ via $\phi$, with the requirement that the existence of the immersion only depends on whether or not these data satisfy some differential equations on $M$, called \emph{fundamental equations}. The earliest result of the kind dates back to Bonnet, who proved that the intrinsic metric and the shape operator (i.e., the first and second fundamental forms) of a surface immersed in Euclidean space $\R^3$ are enough to recover the immersion up to rigid motions, being Gauss and Codazzi equations the fundamental equations in this case. This was later generalized to immersions with arbitrary codimension in ambient spaces of constant curvature, namely $\R^n$, $\mathbb{S}^n$, and $\mathbb{H}^n$ (e.g., see~\cite[\S7.C]{Spivak4} and references therein). 

In this work, we will focus on the case of surfaces isometrically immersed in $3$-manifolds. Apart from the already mentioned space forms, the problem is naturally well posed in homogeneous ambient spaces, since in that case the fundamental equations are autonomous. Building upon the previous work of Tenenblat~\cite{Ten71} in $\mathbb{R}^n$, Daniel~\cite{Daniel07,Daniel09} found fundamental equations for isometric immersions in $\mathbb{E}(\kappa,\tau)$-spaces, the most symmetric ambient $3$-manifolds after the space forms, being their isometry group $4$-dimensional. This family $\mathbb{E}(\kappa,\tau)$ depends on two parameters $\kappa,\tau\in\mathbb{R}$ with $\kappa-4\tau^2\neq 0$ and includes the Thurston geometries $\mathbb{H}^2\times\R$, $\mathbb{S}^2\times\R$, $\Nil$ and $\widetilde{\mathrm{SL}}_2(\R)$, plus the Berger spheres. The key difference with the case of space forms is that the shape operator does not seem to be sufficient to recover the immersion into $\mathbb{E}(\kappa,\tau)$; it is also necessary to prescribe the tangent component of the distinguished unit Killing field, together with the associated angle function. 

Milnor~\cite{Milnor} showed that every simply connected homogeneous Riemannian space, except for the product $\mathbb{S}^2(\kappa)\times\mathbb{R}$, is isometric to a \emph{metric Lie group}, that is, a simply connected 3-dimensional Lie group equipped with a left-invariant metric. We also refer to the work of Meeks--Pérez~\cite{MeeksPerez12} for a thorough survey on the geometry of metric Lie groups with applications to constant mean curvature (\textsc{cmc} for short) surfaces. These Riemannian metric Lie groups are split into two $3$-parameter families depending on whether the underlying Lie group is unimodular or not. The family $\mathbb{E}(\kappa,\tau)$ with $\tau\neq 0$ lies naturally within the unimodular case, although $\mathbb{E}(\kappa,\tau)$ also admits a non-unimodular structure when $\kappa<0$. There have been relevant recent works in surface theory in metric Lie groups, such as the classification of spheres with \textsc{cmc} by Meeks--Mira--Pérez--Ros~\cite{MMPR21,MMPR22}, the classification of polar actions by Domínguez-Vázquez--Ferreira--Otero~\cite{DFT25} or the classification of totally umbilical surfaces by Souam and the second author~\cite{MS}.

Our aim is to consolidate some foundations for a theory of isometric immersions into unimodular metric Lie groups. Although this family of ambient spaces depends on three structure constants $c_1,c_2,c_3\in\R$, we will also allow the left-invariant metric to be semi-Riemannian in order to obtain parallel results for Riemannian and Lorentzian homogeneous 3-manifolds, considering both spacelike and timelike surfaces in the latter. However, the classification of Lorentzian metric Lie groups is more involved than in the Riemannian case, as shown by Rahmani~\cite{Rahmani} (this work was later used by Calvaruso~\cite{Calvaruso06} to classify all homogeneous Lorentzian $3$-manifolds). For each choice of signs $\epsilon_1,\epsilon_2,\epsilon_3\in\{-1,1\}$, we will have a $3$-parameter family of semi-Riemannian $3$-manifolds $G$ which are algebraically similar to those in the Riemannian case. They will be called of \emph{diagonalizable type}, for there is a global orthonormal left-invariant frame $\{E_1,E_2,E_3\}$ in $G$ such that
\[[E_1,E_2]=c_3E_3,\qquad [E_2,E_3]=c_1E_1,\qquad [E_3,E_1]=c_2E_2,\]
where $\epsilon_i=\langle E_i,E_i\rangle$. These vector fields diagonalize the Ricci operator, see the proof of Proposition~\ref{prop:dim-iso-4-6}.

This family includes the Riemannian space forms $\R^3$ and $\mathbb{S}^3$, and in parallel the Minkowski space $\mathbb{L}^3$ and the universal cover of the Anti-de Sitter space $\mathbb{H}^3_1$. It also contains the Lorentzian version $\mathbb{L}(\kappa,\tau)$ of the family $\mathbb{E}(\kappa,\tau)$, see~\cite{Lee}. It is well known that both $\mathbb{E}(\kappa,\tau)$ and $\mathbb{L}(\kappa,\tau)$ are unit Killing submersions with constant bundle curvature $\tau$ over the complete Riemannian surface $\mathbb{M}^2(\kappa)$ of constant curvature $\kappa$, see also~\cite{Man14,DLM}. Interestingly, among Lorentzian metric Lie groups of diagonalizable type, we can find another new family $\widehat{\mathbb{L}}(\kappa,\tau)$ of spaces that admit a semi-Riemannian submersion with constant bundle curvature $\tau$ over the complete Lorentzian surface $\mathbb{M}^2_1(\kappa)$ with constant curvature $\kappa$. Indeed, $\widehat{\mathbb{L}}(\kappa,\tau)$ also admits a unitary (spacelike) Killing vector field tangent to the fibers of the submersion. After the submission of this work, we found this new family of spaces was discovered independently by Calvaruso and Pellegrino~\cite{CalPel25}. Note also that there are other Lorentzian metric Lie groups with isometry group of dimension 4, see~\cite[Rmk.~2.5]{CalPel25}.

In Section~\ref{sec:lie-groups}, we will discuss the preliminaries on metric Lie groups of diagonalizable type and classify those with isometry group of dimension $4$ or $6$ (see Proposition~\ref{prop:dim-iso-4-6}). Also, we give explicit metrics in coordinates for all these semi-Riemannian $3$-manifolds depending analytically on the structure constants $c_1,c_2,c_3$. This extends Daniel's models in~\cite{Daniel07} for $\mathbb{E}(\kappa,\tau)$, as well as the standard metrics given by Milnor~\cite{Milnor} in case the Lie group is a semidirect product $\R^2\rtimes_A\R$. Other than these, the Lie group is either $\SU(2)$ or $\widetilde{\SL}_2(\R)$, in which case we provide explicit isometries or cover maps to the usual matrix representations of these classical groups. Our models fail to be global only if the underlying Lie group is $\SU(2)\cong\mathbb{S}^3$.

Section~\ref{sec:fundamental} is devoted to the main result. Given a non-degenerate surface $\Sigma$ and a metric Lie group $G$, we begin by defining the fundamental data of an isometric immersion $\phi:\Sigma\to G$. They consist of the shape operator $S$ and the orientation $J$ induced by a unit normal $N$, the tangent projections $T_i=E_i^\top$ over $\Sigma$ and the angle functions $\nu_i=\langle E_i,N\rangle$. As compatibility equations, we have Gauss and Codazzi equations, some algebraic restrictions, and equations for the derivatives of $T_i$ and $\nu_i$, see Proposition~\ref{prop:compatibility}. However, this set of equations turns out to be very redundant and our first result will show that, given $T_1,T_2,T_3$ and $J$, one can define univocally $S$ and $\nu_1,\nu_2,\nu_3$, and the only needed equations are those for the derivatives of $T_1,T_2,T_3$, see the Theorem~\ref{thm:fundamental}. In case the immersion exists, it is unique up to left-translations in $G$. The proof of this result is inspired by Daniel's approach in~\cite{Daniel07,Daniel09}, which uses Cartan's moving frames. However, in order to get the immersion from just $T_1,T_2,T_3$ we only need to use Frobënius theorem once, not twice as we shall explain later. Piccione--Tausk~\cite{PicTau08} also present a related approach to problems of the same kind using $G$-structures but their work is not specialized for our type of fundamental data. To be more precise, our work mainly deals with finding a minimal subset of the fundamental data one has to prescribe in order to recover the immersion. Furthermore, we present three applications of Theorem~\ref{thm:fundamental}.

First, it is natural to ask whether or not the left-invariant Gauss map $g=(\nu_1,\nu_2,\nu_3)$ determines the isometric immersion. Actually, we will be always prescribing the metric and orientation $J$ in $\Sigma$, so we are asking if we can also prescribe $g$. There have been other approaches to this problem in the literature, e.g., the Weierstrass-type representation for \textsc{cmc} surfaces by Meeks--Mira--Pérez--Ros~\cite{MMPR22} or another representation assuming positivity of the Gauss curvature by Folha--Peñafiel~\cite{FolPen16}. After submitting this manuscript, we found that (in the Riemannian case) our result can be deduced from the Kenmotsu-type formula given Gálvez--Mira~\cite[Thm.~3.1]{GM16}, see Remark~\ref{rmk:kenmotsu}; however, our arguments provide a different geometric insight and also apply in the Lorentzian setting. More specifically, in Theorem~\ref{thm:angles}, some clever algebraic tricks allow us to recover $T_1,T_2,T_3$ and $S$ from $\nu_1,\nu_2,\nu_3$ up to certain choices of signs at points where the trace of $S$ is non-zero (indeed, we shall see that we are only able to determine the square of the mean curvature, not its sign). The existence of the immersion just relies only on the equations for the derivatives of $T_1,T_2,T_3$ obtained in the process for each possible choice of the mean curvature function, so the immersion might be not unique. 

In this sense, we introduce the notion of \emph{angular companions} as isometric immersions $\phi,\widetilde\phi:\Sigma\to G$ that have pointwise the same three angle functions. Although this problem seems to be overdetermined, we find that an immersion has an angular companion if and only if it satisfies a rather non-evident \textsc{pde} locally at points with non-zero mean curvature, see Equation~\eqref{eqn:spm}, in which case such a companion is locally unique. The Bonnet associate family of zero mean curvature surfaces in $\mathbb{R}^3$ or $\mathbb{L}^3$ are classical examples of angular companions, as well as twin $H$-immersions in the round sphere $\mathbb{S}^3$, see Examples~\ref{ex:bonnet} and~\ref{ex:twin}. Note the lesser known fact that Lawson correspondence~\cite{Lawson70} (see also Gro\ss{}e-Brauckmann~\cite{GB93}) preserves the angle functions while running over different space forms.

Restricting to the Riemannian case, we use the characterization of angular companions in order to give a complete classification of isometric immersions into $\R^3$ or $\mathbb{S}^3$ whose left-invariant Gauss maps into $\mathbb{S}^2$ differ by an orientation-preserving isometry of $\mathbb{S}^2$. This problem was already solved by Abe--Erbacher~\cite{AbeErb75} in $\mathbb{R}^3$, though our approach provides a more direct proof. In~~\cite{AbeErb75}, the case of $\mathbb{S}^3$ is left as an open question that is now settled by Corollary~\ref{coro:uniqueness-angles}. Essentially, we show that the Bonnet family in $\mathbb{R}^3$ and the twin $H$-immersions in $\mathbb{S}^3$ are the only non-trivial examples of this behaviour up to ambient isometries. It would be very interesting to extend this result and the classification of angular companions to the rest of $\mathbb{E}(\kappa,\tau)$-spaces, but some difficulties arise as shown by Examples~\ref{ex:cylinders} and~\ref{ex:twin}.

Our approach to this problem of prescribing $g$ turns out to find an obstruction when the target surface has $H=0$ and $\sum_{\alpha=1}^3c_\alpha\nu_\alpha^2=0$, in which case most equations degenerate. We analyze these conditions in Section~\ref{sec:Hzeta0} to find that they are equivalent to the fact that the immersion has constant angle functions, which is in turn equivalent to be a left coset of a $2$-dimensional Lie subgroup of $G$, see Proposition~\ref{prop:Hzeta0}. Indeed, we find that $H=0$ and $\sum_{i=1}^3c_\alpha\nu_\alpha^2=0$ are necessary and sufficient conditions to have an immersion with constant angles. This also leads us to classify totally geodesic surfaces, see Theorem~\ref{thm:tot-geodesic}, which generalizes the work of Tsukada~\cite{Tsukada} in the Riemannian case. It is worth citing the recent work of Calvaruso--Castrillón--Pellegrino~\cite{CalCasPel25} where totally umbilical surfaces are classified in $\Nil$ even in the non-diagonalizable case. It would be natural to extend these results to totally umbilical surfaces in all Lorentzian homogeneous $3$-manifolds (the Riemannian case was completely solved in~\cite{MS}).

As a second application of Theorem~\ref{thm:fundamental}, in Section~\ref{sec:bcv} we deal with the extension of Daniel's fundamental theorem to the Lorentzian setting. The idea is to prescribe the shape operator $S$ plus a minimal subset of $T_1,T_2,T_3,\nu_1,\nu_2,\nu_3$. Proposition~\ref{prop:M-LM} actually shows that this only makes sense if the dimension of the isometry group is at least $4$, which turns our attention to the spaces $\mathbb{E}(\kappa,\tau)$, $\mathbb{L}(\kappa,\tau)$ and $\widehat{\mathbb{L}}(\kappa,\tau)$. In Theorem~\ref{thm:dim4}, we will give a unified fundamental theorem in the spirit of Daniel's that works in the three scenarios for spacelike or timelike immersions. This leads to four Lorentzian analogues of the Daniel correspondence for spacelike and timelike surfaces in $\mathbb{L}(\kappa,\tau)$ and $\widehat{\mathbb{L}}(\kappa,\tau)$, see Corollary~\ref{coro:fundamental-L}. In $\mathbb{L}(\kappa,\tau)$, our correspondence generalizes that of Palmer~\cite{Palmer90} in Lorentzian space forms, and we conjecture it must be dual to the Daniel correspondence via the Calabi--Lee duality~\cite{Lee}. This framework encompasses product spaces with $\tau=0$, although product spaces are not unimodular metric Lie groups. Rather luckily, the fundamental equations extend continuously to the case $\tau=0$ and we can rely on fundamental theorems previously proved by Roth~\cite{Roth11} in $\mathbb{M}^2(\kappa)\times\R_1$ and by Lawn--Ortega~\cite{LO15} in $\mathbb{M}^2_1(\kappa)\times\R$ (this last work actually applies to a broader family of warped products, not necessarily homogeneous). 

It is interesting to point out that all results in Section~\ref{sec:fundamental}, i.e., the prescription of either $T_1,T_2,T_3$ or $\nu_1,\nu_2,\nu_3$ can be formally solved as first-order \textsc{pde} system, in the sense that we employ Frobënius theorem just once. However, in order to prescribe the shape operator in Section~\ref{sec:bcv}, the equations becomes a second-order \textsc{pde} system, and Frobënius theorem has to be applied twice, being the change-of-frame matrix $M$ introduced in Section~\ref{sec:change-of-frame} the intermediate object in the integration process.

As our third and last application, in Section~\ref{sec:lawson} we will show that there is no further extension (among Riemannian unimodular metric Lie groups) of the classical Lawson correspondence~\cite{Lawson70} other than the Daniel correspondence. Note that Daniel correspondence has been a cornerstone in the theory of \textsc{cmc} surfaces in $\mathbb{E}(\kappa,\tau)$ and has fostered conjugate Plateau constructions in product spaces (see~\cite{CMT} and its references). Also, one can make use of a correspondence to get a simpler related problem in another ambient space, e.g., see~\cite{DomMan,CCC}. In Theorem~\ref{thm:lawson} we show that for fixed $H,\widetilde H\in\R$ and Riemannian unimodular metric Lie groups $G$ and $\widetilde G$, we cannot find any non-trivial bijection (up to ambient isometries and permutation of structure constants) from the family of \textsc{cmc} $H$ surfaces in $G$ to the family of \textsc{cmc} $\widetilde H$ surfaces in $\widetilde G$ that rotates the traceless shape operators (with some natural extra assumptions of compatibility with the orientation). We conjecture this non-existence result also holds in the Lorentzian case, but the computations and distinction of cases are too intricate and laborious for a result that would likely remain negative.

\medskip

\noindent\textbf{Acknowledgement.} This work was supported by grant no.~PID2022-142559NB-I00, funded by MCIN/AEI/10.13039/501100011033, and is part of the third author’s PhD thesis. The authors would like to thank B. Daniel and M. Domínguez-Vázquez for their comments, which improved the final version of the manuscript.

\section{Unimodular metric Lie groups of diagonalizable type}\label{sec:lie-groups}

A Lie group $G$ is called \emph{unimodular} if its left-invariant Haar measure is also right-invariant. It is well known that $G$ is unimodular if, and only if, for any $X\in\mathfrak g$, the adjoint endomorphism $\mathrm{ad}_X:\mathfrak g\to\mathfrak g$ given by $\mathrm{ad}_X(Y)=[X,Y]$ has trace equal to zero (as usual, $\mathfrak g$ denotes the Lie algebra of $G$). In dimension $3$, if $G$ is oriented and endowed with a (nondegenerate) semi-Riemannian metric $\langle\cdot,\cdot\rangle$, then both the cross product $\times$ and the Lie bracket $[\cdot,\cdot]$ are skew-symmetric, Milnor~\cite{Milnor} considered the unique linear operator $L:\mathfrak g\to\mathfrak g$ such that
\begin{equation}\label{eqn:unimodular}
[X,Y]=L(X\times Y),\quad\text{for all }X,Y\in\mathfrak g.
\end{equation}
Here, the cross product is defined so that $\langle u\times v,w\rangle=\det_{\mathbb{B}}(u,v,w)$ for all $u,v,w\in\mathfrak{g}$, where the determinant takes the coordinates of $u,v,w$ (as columns) in any positively oriented orthonormal basis $\{u_1,u_2,u_3\}$ with $\langle u_i,u_j\rangle=\epsilon_i\delta_j^i$ and $\epsilon_1,\epsilon_2,\epsilon_3\in\{-1,1\}$ indicate the signature of the metric. It turns out that $G$ is unimodular if and only if $L$ in~\eqref{eqn:unimodular} is self-adjoint, see~\cite[Lem.~2.1]{Rahmani}. 

\begin{definition}
We will say that $G$ is an unimodular metric Lie group of diagonalizable type if the Milnor operator $L$ is diagonalizable.
\end{definition}

This condition will be assume hereafter, and it is automatically true in the Riemannian case $\epsilon_1=\epsilon_2=\epsilon_3=1$. It implies the existence of a positively oriented left-invariant orthonormal frame $\{E_1,E_2,E_3\}$ in $G$ such that $\langle E_i,E_j\rangle=\epsilon_i\delta_j^i$ and constants $c_1,c_2,c_3\in\R$ such that 
\begin{align}
[E_1,E_2]&=c_3E_3, &[E_2,E_3]&=c_1E_1,&[E_3,E_1]&=c_2E_2.\label{eqn:lie-bracket-unimodular-frame}
\end{align}
Note also that being positively oriented yields
\begin{align}
E_1\times E_2&=\epsilon_3 E_3, &E_2\times E_3&=\epsilon_1E_1,&E_3\times E_1&=\epsilon_2E_2.\label{eqn:cross-unimodular-frame}
\end{align}
The constants $c_1,c_2,c_3$ and $\epsilon_1,\epsilon_2,\epsilon_3$ will be called the \emph{structure constants} and the \emph{signs} of $G$, respectively, for short.

The signs of $c_1,c_2,c_3\in\R$ in turn determine the underlying Lie group structure as shown in Table~\ref{fig:unimodular-mlg}. Note that it is not necessary to consider all possible signs because a change of sign of all the $c_i$ amounts to reversing the orientation of $G$ while preserving its geometry. We can also relabel $E_1$, $E_2$ and $E_3$ to discard some repeated cases, but in the Lorentzian case we have some duplicities because the vector field $E_3$ is assumed timelike, which breaks the cyclic symmetry of indexes. In order to preserve this symmetry in all the forthcoming computations, we will keep the arbitrary signature constants $\epsilon_1,\epsilon_2,\epsilon_3$ (this will also help us distinguish between timelike and spacelike surfaces in the Lorentzian case).

\begin{table}
\begin{subtable}[t]{0.48\textwidth}
\centering
\begin{tabular}[t]{cccc}
\toprule $c_1$& $c_2$& $c_3$&Lie group\\
\midrule
$+$& $+$& $+$& $\SU(2)$\\
$+$& $+$& $-$& $\widetilde{\mathrm{Sl}}_2(\R)$\\
$+$& $+$& $0$& $\widetilde{\mathrm{E}}(2)$\\
$+$& $-$& $0$& $\mathrm{Sol}_3$\\
$+$& $0$& $0$& $\mathrm{Nil}_3$\\
$0$& $0$& $0$& $\R^3$\\
\bottomrule
\end{tabular}
\caption{Riemannian case\\
$\epsilon_1=\epsilon_2=\epsilon_3=1$.}\label{fig:unimodular-mlg-riemannian}
\end{subtable}
\begin{subtable}[t]{0.48\textwidth}
\centering
\begin{tabular}[t]{cccc}
\toprule $c_1$& $c_2$& $c_3$&Lie group\\
\midrule
$+$& $+$& $+$& $\SU(2)$\\
$+$& $+$& $-$& $\widetilde{\mathrm{Sl}}_2(\R)$\\
$+$& $-$& $+$& $\widetilde{\mathrm{Sl}}_2(\R)$\\
$+$& $+$& $0$& $\widetilde{\mathrm{E}}(2)$\\
$+$& $0$& $+$& $\widetilde{\mathrm{E}}(2)$\\
$+$& $-$& $0$& $\mathrm{Sol}_3$\\
$+$& $0$& $-$& $\mathrm{Sol}_3$\\
$+$& $0$& $0$& $\mathrm{Nil}_3$\\
$0$& $0$& $+$& $\mathrm{Nil}_3$\\
$0$& $0$& $0$&$\R^3$\\
\bottomrule
\end{tabular}
\caption{Lorentzian case\\
$\epsilon_1=\epsilon_2=1,\epsilon_3=-1$}\label{fig:unimodular-mlg-lorentzian}
\end{subtable}
\caption{Underlying Lie group structures depending on the signs of the structure constants, see~\cite{Milnor,Rahmani}.}\label{fig:unimodular-mlg}
\end{table}

\subsection{The isometry group}
Consider the real constants $\mu_1,\mu_2,\mu_3$ given by 
\[\mu_1=\frac{-\epsilon_1c_1+\epsilon_2c_2+\epsilon_3c_3}{2},\quad\mu_2=\frac{\epsilon_1c_1-\epsilon_2c_2+\epsilon_3c_3}{2},\quad\mu_3=\frac{\epsilon_1c_1+\epsilon_2c_2-\epsilon_3c_3}{2}.\]
Koszul formula along with the Lie brackets in~\eqref{eqn:lie-bracket-unimodular-frame} easily yield the Levi-Civita connection $\overline\nabla$ of $G$ in the orthornormal frame $\{E_1,E_2,E_3\}$:
\begin{equation}\label{eqn:nabla-unimodular}
\begin{array}{lclcl}
\overline\nabla_{E_1}E_1=0,&&
\overline\nabla_{E_1}E_2=\epsilon_3\mu_1E_3,&&
\overline\nabla_{E_1}E_3=-\epsilon_2\mu_1E_2,\\
\overline\nabla_{E_2}E_1=-\epsilon_3\mu_2E_3,&&
\overline\nabla_{E_2}E_2=0,&&
\overline\nabla_{E_2}E_3=\epsilon_1\mu_2E_1,\\
\overline\nabla_{E_3}E_1=\epsilon_2\mu_3E_2,&&
\overline\nabla_{E_3}E_2=-\epsilon_1\mu_3E_1,&&
\overline\nabla_{E_3}E_3=0.
\end{array}
\end{equation}
The Riemann curvature tensor defined as
\[\overline R(X,Y)Z=\overline\nabla_X\overline\nabla_YZ-\overline\nabla_Y\overline\nabla_XZ-\overline\nabla_{[X,Y]}Z,\quad X,Y,Z\in\X(G),\]
can be computed from the above expression of the Levi-Civita connection:
\begin{equation}\label{eqn:R-Ei}
\begin{array}{l}
\overline R(E_1,E_2)E_1=\epsilon_2(\epsilon_3\mu_1\mu_2-c_3\mu_3)E_2,\\
\overline R(E_1,E_2)E_2=-\epsilon_1(\epsilon_3\mu_1\mu_2-c_3\mu_3)E_1,\\
\overline R(E_1,E_2)E_3=0,\\
\overline R(E_1,E_3)E_1=\epsilon_3(\epsilon_2\mu_1\mu_3-c_2\mu_2)E_3,\\
\overline R(E_1,E_3)E_2=0,\\
\overline R(E_1,E_3)E_3=-\epsilon_1(\epsilon_2\mu_1\mu_3-c_2\mu_2)E_1,\\
\overline R(E_2,E_3)E_1=0,\\
\overline R(E_2,E_3)E_2=\epsilon_3(\epsilon_1\mu_2\mu_3-c_1\mu_1)E_3,\\
\overline R(E_2,E_3)E_3=-\epsilon_2(\epsilon_1\mu_2\mu_3-c_1\mu_1)E_2.
\end{array}
\end{equation}
The following result follows by just checking that both sides coincide in the orthonormal frame $\{E_1,E_2,E_3\}$, which is a long but straightforward computation (see also~\cite[Lem.~2.2]{MS}, where the Riemannian case is studied).

\begin{lemma}\label{lemma:R}
In the previous situation,
\begin{align*}
\overline R&=(\epsilon_1\mu_2\mu_3-c_1\mu_1)\epsilon_2\epsilon_3\overline R_1+(\epsilon_2\mu_1\mu_3-c_2\mu_2)\epsilon_1\epsilon_3\overline R_2+(\epsilon_3\mu_1\mu_2-c_3\mu_3)\epsilon_1\epsilon_2\overline R_3,
\end{align*}
where, for $i\in\{1,2,3\}$ and $X,Y,Z\in\X(G)$, the tensor $\overline R_i$ is given by
\begin{align*}
\overline R_i(X,Y)Z&=\langle X,Z\rangle Y-\langle Y,Z\rangle X-\epsilon_i\langle Z,E_i\rangle\langle X,E_i\rangle Y\\&\quad+\epsilon_i\langle Z,E_i\rangle\langle Y,E_i\rangle X-\epsilon_i\langle Y,E_i\rangle\langle X,Z\rangle E_i+\epsilon_i\langle X,E_i\rangle\langle Y,Z\rangle E_i.
\end{align*}
\end{lemma}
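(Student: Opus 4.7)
My plan would be as follows. Since both $\overline R$ and each $\overline R_i$ are $C^\infty(G)$-multilinear in their three arguments, the proposed identity is tensorial and it suffices to verify it when the inputs are drawn from the left-invariant orthonormal frame $\{E_1,E_2,E_3\}$. Moreover, all four tensors are manifestly skew-symmetric in the first two slots, so only nine cases need to be inspected: the three pairs $(E_j,E_k)$ with $j<k$ paired with each of the possible third arguments $E_\ell$, $\ell\in\{1,2,3\}$.

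The first concrete step is to establish a vanishing property of the auxiliary tensors. Substituting frame vectors into the defining formula for $\overline R_i(X,Y)Z$ and using $\langle E_m,E_i\rangle=\epsilon_i\delta^i_m$ together with $\epsilon_i^2=1$, one checks that $\overline R_i(E_j,E_k)E_\ell$ vanishes whenever any of $j,k,\ell$ coincides with $i$: in every such situation the two surviving $\epsilon_i$-terms cancel pairwise against the initial piece $\langle X,Z\rangle Y-\langle Y,Z\rangle X$. Consequently, for a fixed pair $(E_j,E_k)$ with $j\ne k$, at most one summand in the right-hand side of the lemma contributes, namely the one indexed by the unique $i\in\{1,2,3\}\setminus\{j,k\}$, and even that summand is zero unless $E_\ell\in\{E_j,E_k\}$. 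This immediately reproduces the three zeros $\overline R(E_1,E_2)E_3=\overline R(E_1,E_3)E_2=\overline R(E_2,E_3)E_1=0$ recorded in~\eqref{eqn:R-Ei}.

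For the remaining six non-trivial evaluations, a one-line computation gives, whenever $\{i,j,k\}=\{1,2,3\}$,
\[\overline R_i(E_j,E_k)E_j=\epsilon_j E_k,\qquad \overline R_i(E_j,E_k)E_k=-\epsilon_k E_j,\]
the $\epsilon_i$-correction terms dropping out because none of $E_j,E_k$ has a component along $E_i$. Substituting these into the proposed linear combination and simplifying with $\epsilon_i^2=1$ should yield, for instance when $(j,k)=(1,2)$, the values $\overline R(E_1,E_2)E_1=\epsilon_2(\epsilon_3\mu_1\mu_2-c_3\mu_3)E_2$ and $\overline R(E_1,E_2)E_2=-\epsilon_1(\epsilon_3\mu_1\mu_2-c_3\mu_3)E_1$, matching the first two lines of~\eqref{eqn:R-Ei}; the other four identifications are cyclic analogues and will be verified in exactly the same fashion.

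The only real obstacle is the careful bookkeeping of the signature constants $\epsilon_i$ and the cyclic permutation of indices; no deeper input is needed once~\eqref{eqn:R-Ei} is taken for granted. A more structural alternative would be to exploit that in dimension three the Riemann tensor is algebraically determined by the Ricci tensor and to verify instead that both sides induce the same diagonal Ricci operator on $\{E_1,E_2,E_3\}$, but this route involves essentially the same computation and loses the transparency of the frame-by-frame check.
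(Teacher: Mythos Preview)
Your proposal is correct and follows exactly the approach the paper indicates: the paper simply states that the identity ``follows by just checking that both sides coincide in the orthonormal frame $\{E_1,E_2,E_3\}$, which is a long but straightforward computation,'' and your frame-by-frame verification (including the vanishing of $\overline R_i(E_j,E_k)E_\ell$ when $i\in\{j,k,\ell\}$ and the explicit values $\overline R_i(E_j,E_k)E_j=\epsilon_j E_k$, $\overline R_i(E_j,E_k)E_k=-\epsilon_k E_j$) carries this out cleanly.
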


Note that a metric Lie group has isometry group of dimension $3$, $4$ or $6$. We will now give a classification of the most symmetric cases.

\begin{proposition}\label{prop:dim-iso-4-6}
Let $G$ be a simply connected semi-Riemannian metric Lie group of diagonalizable type. The following classification is up to relabeling indexes and globally changing the sign of the metric.
	\begin{enumerate}[label=\emph{(\alph*)}]
		\item The isometry group of $G$ has dimension $6$ if and only if either $\epsilon_1c_1=\epsilon_2c_2=\epsilon_3c_3$ or $\epsilon_1c_1=\epsilon_2c_2$ and $c_3=0$. More precisely:
		\begin{enumerate}[label=\emph{(\roman*)}]
			\item If $\epsilon_1c_1=\epsilon_2c_2=\epsilon_3c_3$, we get two subcases:
			\begin{itemize}
			 	\item If $\epsilon_1=\epsilon_2=\epsilon_3=1$, then $G$ is a Riemannian space form of constant curvature $\frac{1}{4}c_1^2$, that is, either the round sphere $\mathbb{S}^3\cong\SU(2)$, or the flat Euclidean space $\R^3$.
			 	\item If $\epsilon_1=\epsilon_2=1$ and $\epsilon_3=-1$, then $G$ is a Lorentzian space form of constant curvature $\frac{-1}{4}c_1^2$, that is, either the universal cover of the anti-de Sitter space $\mathbb{H}^3_1\cong\widetilde{\mathrm{SL}}_2(\R)$, or the flat Minkowski space $\mathbb{L}^3$.
			 \end{itemize}  
			
			\item If $\epsilon_1c_1=\epsilon_2c_2\neq 0$ and $c_3=0$, we get two subcases: 
			\begin{itemize}
			 	\item If $\epsilon_1=\epsilon_2=1$, then $G$ is $\widetilde{\mathrm{E}}(2)$ with a flat metric (globally isometric to $\R^3$ or $\mathbb{L}^3$ depending on the sign of $\epsilon_3$).
			 	\item If $\epsilon_1=\epsilon_3=1$ and $\epsilon_2=-1$, then $G$ is isomorphic to $\mathrm{Sol}_3$ with a Lorentzian flat metric (globally isometric to $\mathbb{L}^3$).
			 \end{itemize}  
		\end{enumerate}
		\item The isometry group of $G$ has dimension $4$ if and only if $\epsilon_1c_1=\epsilon_2c_2\neq\epsilon_3c_3\neq 0$. These spaces admit semi-Riemannian submersions with constant bundle curvature over a complete (Riemannian or Lorentzian) surface of constant curvature whose fibers are the integral curves of a unit (spacelike or timelike) Killing vector field. In other words, we recover the Bianchi-Cartan-Vr\u{a}nceanu spaces $\mathbb{E}(\kappa,\tau)$, their Lorentzian counterparts $\mathbb{L}(\kappa,\tau)$, and another new family $\widehat{\mathbb{L}}(\kappa,\tau)$. (Explicit models will be given later on in Remark~\ref{rmk:extension-previous-metrics}.)
	\end{enumerate}
\end{proposition}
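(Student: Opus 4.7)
The plan is to derive the classification from the algebraic structure of the curvature tensor already encoded in Lemma~\ref{lemma:R}. As a first step, I would compute the Ricci operator from Equation~\eqref{eqn:R-Ei}: a direct calculation shows that $\Ric$ is diagonal in the frame $\{E_1,E_2,E_3\}$ with eigenvalues
\[\lambda_i = 2\,\epsilon_1\epsilon_2\epsilon_3\,\mu_j\mu_k,\qquad (i,j,k)\text{ cyclic permutation of }(1,2,3),\]
or equivalently, each ``coordinate'' sectional curvature is $K_{jk}=-A_i$ with $A_i$ as in Lemma~\ref{lemma:R}. In dimension three the Weyl tensor vanishes and $\overline R$ is fully determined by $\Ric$ and the metric, so $G$ is Einstein---equivalently, of constant sectional curvature---if and only if $\lambda_1=\lambda_2=\lambda_3$.

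For part~(a) I would use the fact that $\dim\Iso(G)=6$ if and only if $G$ has constant sectional curvature, so the task reduces to solving $\mu_1\mu_2=\mu_1\mu_3=\mu_2\mu_3$ algebraically. An elementary case split gives two exhaustive possibilities: either all three $\mu_i$ coincide (yielding, if nonzero, constant curvature $K=\epsilon_1\epsilon_2\epsilon_3\mu^2$), which happens precisely when $\epsilon_1c_1=\epsilon_2c_2=\epsilon_3c_3$; or at least two $\mu_i$ vanish, forcing all Ricci eigenvalues to vanish and, after a short computation, $c_3=0$ together with $\epsilon_1c_1=\epsilon_2c_2$ up to relabeling. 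Matching the signs $\epsilon_i$ and the signs of $c_i$ against Table~\ref{fig:unimodular-mlg} then identifies the Lie group in each subcase, recovering $\s^3\cong\SU(2)$, $\R^3$, $\widetilde{\mathrm{SL}}_2(\R)\cong\h^3_1$, $\L^3$ in~(i), and the flat metrics on $\widetilde{\mathrm{E}}(2)$ or $\mathrm{Sol}_3$ in~(ii).

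For part~(b) I would switch to an automorphism criterion. The continuous part of the isotropy at the identity coincides with the group of orthogonal Lie algebra automorphisms of $\mathfrak g$, and a short direct computation using~\eqref{eqn:lie-bracket-unimodular-frame} shows that the $1$-parameter subgroup of rotations (or boosts, depending on the causal character of the plane) in $\mathrm{span}(E_i,E_j)$ preserves the bracket if and only if $\epsilon_ic_i=\epsilon_jc_j$. Hence $\dim\Iso(G)\ge 4$ requires such a coincidence for some pair of indexes; after relabeling, $\epsilon_1c_1=\epsilon_2c_2$. Excluding the two degenerate alternatives already classified in~(a) then yields precisely $\epsilon_1c_1=\epsilon_2c_2\ne\epsilon_3c_3\ne 0$.

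To identify the three families in~(b) geometrically, the same automorphism condition implies that $E_3$ is fixed by the $1$-parameter isotropy and hence is a unit Killing field; its flow defines a semi-Riemannian submersion $\pi\colon G\to G/\mathrm{Flow}(E_3)$ with constant bundle curvature $\tau=\mu_3=\epsilon_1c_1-\tfrac12\epsilon_3c_3$ over a complete homogeneous surface of constant curvature $\kappa$. Splitting according to the signature pattern then recovers $\E(\kappa,\tau)$ (for $\epsilon_1=\epsilon_2=\epsilon_3=1$), $\L(\kappa,\tau)$ (for $\epsilon_1=\epsilon_2=1$, $\epsilon_3=-1$, with $E_3$ timelike), and $\widehat\L(\kappa,\tau)$ (for $\epsilon_1\epsilon_2=-1$, $\epsilon_3=1$, yielding a Lorentzian base). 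The main technical obstacle I anticipate is precisely this bookkeeping: the algebraic condition $\epsilon_ic_i=\epsilon_jc_j$ is blind to the three families, so they must be distinguished by carefully tracking the signs $\epsilon_i$ and the resulting causal characters of $E_3$ and of the base, with $\widehat\L(\kappa,\tau)$ emerging as a genuinely new Lorentzian family without Riemannian analogue.
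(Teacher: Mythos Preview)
Your treatment of part~(a) is essentially the paper's: both reduce to $\mu_1\mu_2=\mu_2\mu_3=\mu_3\mu_1$ and solve algebraically.

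Part~(b) has a genuine gap. Your premise ``the continuous part of the isotropy at the identity coincides with the group of orthogonal Lie algebra automorphisms of $\mathfrak g$'' is unjustified and in fact false: for $\widetilde{\mathrm E}(2)$ with its flat left-invariant metric (case~(a)(ii)) the stabilizer is $3$-dimensional but the orthogonal automorphism group is only $1$-dimensional. What is true is that the isotropy representation embeds $\Stab_e$ into $\OO(T_eG)$ and commutes with the Ricci operator; it need not preserve the Lie bracket. Your automorphism check correctly gives the \emph{sufficient} direction ($\epsilon_ic_i=\epsilon_jc_j$ yields a $1$-parameter isotropy), but for the \emph{necessary} direction you miss the subtle case: when $\mu_3=0$ and $\mu_1\neq\mu_2$ one has $\lambda_1=\lambda_2\neq\lambda_3$, so the Ricci operator alone permits rotations in $\mathrm{span}(E_1,E_2)$, yet no $\epsilon_ic_i=\epsilon_jc_j$ holds. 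The paper rules this out by writing $\Phi_*E_i$ with a \emph{variable} angle $\theta\in\mathcal C^\infty(G)$ and computing $\langle[\Phi_*E_i,\Phi_*E_3],\Phi_*E_j\rangle$ directly (Equation~\eqref{eqn:csc2} in the proof), obtaining $(c_1-c_2)(1-\cos 2\theta)=0$ (or its hyperbolic analogue), which forces $\theta$ constant and the stabilizer discrete. Without this calculation your argument does not exclude $\dim\Iso(G)=4$ in that regime.

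A minor correction: the bundle curvature is $\tau=\mu_1=\mu_2=\tfrac{\epsilon_3c_3}{2}$, not $\mu_3$; under $\epsilon_1c_1=\epsilon_2c_2$ one has $\mu_1=\mu_2$ while $\mu_3=\epsilon_1c_1-\tfrac{\epsilon_3c_3}{2}$ encodes $\kappa-4\tau^2$ rather than $\tau$.
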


\begin{proof}
The sectional curvature of the plane spanned by $E_i$ and $E_j$ is given by $\epsilon_i\epsilon_j\langle\overline R(E_i,E_j)E_j,E_i\rangle$, so that $G$ has constant curvature whenever this quantity does not depend on the choice of $i\neq j$. By Equation~\eqref{eqn:R-Ei}, this amounts to
\begin{equation}\label{eqn:csc1}
\epsilon_2\epsilon_3(-\epsilon_1\mu_2\mu_3+c_1\mu_1)=\epsilon_1\epsilon_3(-\epsilon_2\mu_1\mu_3+c_2\mu_2)=\epsilon_1\epsilon_2(-\epsilon_3\mu_1\mu_2+c_3\mu_3).
\end{equation}
After some manipulations,~\eqref{eqn:csc1} can be written equivalently as
\begin{equation}\label{eqn:csc2}\mu_1\mu_2=\mu_2\mu_3=\mu_3\mu_1.
\end{equation}
This implies that either $\mu_1=\mu_2=\mu_3$ (i.e., $\epsilon_1c_1=\epsilon_2c_2=\epsilon_3c_3$) or $\mu_1=\mu_2=0$ and $\mu_3\neq 0$ (i.e., $\epsilon_1c_1=\epsilon_2c_2\neq 0$ and $c_3=0$) up to relabeling indexes. Item (a) follows from checking the Lie group structures in Table~\ref{fig:unimodular-mlg} and computing the sectional curvature of $G$ by means of Equation~\eqref{eqn:R-Ei} in each subcase.

As for item (b), observe first that the frame $\{E_1,E_2,E_3\}$ diagonalizes the Ricci operator, that is, we have $\Ric(E_i,E_j)=0$ if $i\neq j$ and $\Ric(E_i,E_i)=\epsilon_i\lambda_i$, with $\lambda_1=2\epsilon_1\epsilon_2\epsilon_3\mu_2\mu_3$, $\lambda_2=2\epsilon_1\epsilon_2\epsilon_3\mu_3\mu_1$ and $\lambda_3=2\epsilon_1\epsilon_2\epsilon_3\mu_1\mu_2$. If $\{X_1,X_2,X_3\}$ is any other orthonormal frame that diagonalizes the Ricci operator with $\Ric(X_i,X_i)=\epsilon_id_i$, and we express $X_i=\sum_{k=1}^3a_{ik}E_k$, it follows that $AL\mathcal{E}A^t=D\mathcal{E}$, where $L=\operatorname{diag}(\lambda_1,\lambda_2,\lambda_3)$, $D=\operatorname{diag}(d_1,d_2,d_3)$ and $\mathcal{E}=\operatorname{diag}(\epsilon_1,\epsilon_2,\epsilon_3)$. Since $A$ is a change-of-basis for orthonormal frames, we get $\mathcal{E}A^t=A^{-1}\mathcal{E}$, which gives $ALA^{-1}=D$. From standard diagonalizability arguments, it follows that $d_i$ are a permutation of the $\lambda_i$ and each $X_i$ lies in the corresponding eigenspace (spanned by more than one of the $E_i$ if some $\lambda_i$ has higher multiplicity). In particular, if $\lambda_1,\lambda_2,\lambda_3$ are all distinct, then $\{E_1,E_2,E_3\}$ is determined up to changes of signs, which means that the stabilizer of any point of $G$ is finite and hence $\dim(\Iso(G))=3$. Also, if $\lambda_1=\lambda_2=\lambda_3$, then~\eqref{eqn:csc2} holds and $\dim(\Iso(G))=6$.

We are left with the case just two of the eigenvalues coincide, so assume $\lambda_1=\lambda_2\neq\lambda_3$ with no loss of generality (i.e., $\mu_2\mu_3=\mu_1\mu_3\neq \mu_1\mu_2$), which gives two cases up to relabeling indexes and changing the sign of the metric:
\begin{enumerate}
	\item Assume $\mu_1=\mu_2$ and $\mu_3\neq 0$. If $\epsilon_1=\epsilon_2=1$, this means that $c_1=c_2\neq\epsilon_3c_3\neq 0$, so there exist $\kappa,\tau\in\R$, $\tau\neq 0$, such that $c_1=c_2=\frac{\kappa}{2\tau}$ and $c_3=2\tau$, and it is well known that $G$ is isometric to the Bianchi-Cartan-Vr\u{a}nceanu space $\E(\kappa,\tau)$ in the Riemannian case $\epsilon_3=1$ or its Lorentzian counterpart $\mathbb{L}(\kappa,\tau)$ if $\epsilon_3=-1$, see~\cite{Daniel09,MeeksPerez12,DLM}. Both $\E(\kappa,\tau)$ and $\mathbb{L}(\kappa,\tau)$ are Killing submersions with constant bundle curvature $\tau$ over the complete simply connected Riemannian surface $\mathbb{M}^2(\kappa)$ of constant curvature $\kappa$, see also~\cite{Man14}. 

	If $\epsilon_1=1$ and $\epsilon_2=-1$, we can also set $c_1=-c_2=\frac{-\kappa}{2\tau}$ and $c_3=2\tau\neq 0$. This Lorentzian space $G$ admits a semi-Riemannian Killing submersion (in which the Killing vector field is spacelike) with constant bundle curvature $\tau$ over the complete simply connected Lorentzian surface $\mathbb{M}^2_1(\kappa)$ of constant curvature $\kappa$. We also get an isometry group of dimension $4$, since the $3$-dimensional group $\Iso(\mathbb{M}^2_1(\kappa))$ can be lifted to $G$ up to vertical translations with respect to the aforesaid submersion. Although the theory of semi-Riemannian Killing submersions is not established yet, these are standard arguments that can be straightforwardly adapted from~\cite{Man14} to the Lorentzian case. Note that models given by Remark~\ref{rmk:extension-previous-metrics} enable explicit verifications of these assertions.

	\item Assume now $\mu_3=0$ and $\mu_1,\mu_2\neq 0$, which implies that $\lambda_3$ is not equal to $\lambda_1$ or $\lambda_2$, whence any isometry $\Phi\in\Iso(G)$ must preserve $E_3$ or send it to $-E_3$. Therefore, $\Iso(G)$ has dimension $4$ if and only if there is a continuous $1$-parameter subgroup in the stabilizer (which must preserve the orientation, the causality and the direction $E_3$). This directs our attention to isometries that rotate $E_1$ and $E_2$ (in a Riemannian or Lorentzian sense).

	If $\epsilon_1=\epsilon_2=1$, assume $\Phi\in\Iso(G)$ is such that $\Phi_*E_1=\cos(\theta)E_1+\sin(\theta)E_2$, $\Phi_*E_2=-\sin(\theta)E_1+\cos(\theta)E_2$ and $\Phi_*E_3=E_3$ for some function $\theta\in\mathcal C^\infty(G)$. The identities $\langle[E_1,E_3],E_2\rangle=-c_2$ and $\langle[E_2,E_3],E_1\rangle=c_1$ can be pushforwarded by means of $\Phi$ to obtain
		\begin{equation}\label{eqn:csc3}
			\begin{aligned}
		 		 -c_2&=\langle[\Phi_*E_1,\Phi_*E_3],\Phi_*E_2\rangle=-c_1\sin^2(\theta)-c_2\cos^2(\theta)-E_3(\theta),\\
		 		 c_1&=\langle[\Phi_*E_2,\Phi_*E_3],\Phi_*E_1\rangle=c_2\sin^2(\theta)+c_1\cos^2(\theta)+E_3(\theta).
				\end{aligned}
		\end{equation}
		Adding both expressions in~\eqref{eqn:csc3}, we get $(c_1-c_2)(1-\cos(2\theta))=0$. We can assume $c_1\neq c_2$ since the case $c_1=c_2$ has been analyzed in the above item (1), so we get $\theta\equiv 0$ o $\theta\equiv \pi$. In particular, the stabilizers of $G$ are discrete and $G$ has isometry group of dimension $3$.

		If $\epsilon_1=1$ and $\epsilon_2=-1$, we are looking for $\Phi\in\Iso(G)$ with $\Phi_*E_1=\cosh(\theta)E_1+\sinh(\theta)E_2$, $\Phi_*E_2=\sinh(\theta)E_1+\cosh(\theta)E_2$ and $\Phi_*E_3=E_3$ for some $\theta\in\mathcal C^\infty(G)$. Applying $\Phi$ to $\langle[E_1,E_3],E_2\rangle=-\epsilon_2c_2$ and $\langle[E_2,E_3],E_1\rangle=\epsilon_1c_1$, the results add to $(c_1+c_2)(1-\cosh(2\theta))=0$. Since we can assume $c_1\neq -c_2$ by item (1), it must be $\theta\equiv 0$ and we are done as in the previous case.\qedhere
	\end{enumerate}
\end{proof}

\begin{remark}\label{rmk:stabilizer}
The isometry group of a metric Lie group $G$ is determined by the left-translations by elements of $G$ itself and by the stabilizer $\Stab_p$ of any $p\in G$ (all stabilizers are conjugate by left-translations). It is well known that each element of $\Stab_p$ preserves left-invariant vector fields so it becomes an isometry of the Lie algebra $\mathfrak{g}$. Thus, $\Stab_p$ can be identified naturally with a subgroup of $\OO(3)$ in the Riemannian case or $\OO(2,1)$ in the Lorentzian case. If $\dim(\Iso(G))=6$, then $\Stab_p$ is well known is all $\OO(3)$ or $\OO(2,1)$. If $\dim(\Iso(G))=4$, then $\Stab_p$ contains a Lie subgroup $\SO(2)$ or $\SO(1,1)$ of rotations about the unit Killing direction. If $\dim(\Iso(G))=3$, then $\Stab_p$ is discrete and contains either $4$ or $8$ elements, depending on whether or not $G$ admits orientation-reversing isometries (see~\cite[Props.~2.21 and~2.24]{MeeksPerez12}, although the proof easily extends to the semi-Riemannian case since elements of $\Stab_p$ must preserve the directions that diagonalize the Ricci operator by the same arguments as in the proof of Proposition~\ref{prop:dim-iso-4-6}).
\end{remark}

\subsection{A unified model in coordinates}
Given real constants $c_1,c_2,c_3\in\mathbb{R}$, we will work in the open domain of the plane
\[D = \{(x,y)\in \mathbb{R}^2: \lambda(x,y) > 0\},\qquad \lambda(x,y) = \bigr(1+\tfrac{c_3}{4}(c_2 x^2+c_1 y^2)\bigl)^{-1}.\]
Consider the functions $c,s:\mathbb{R}\to\mathbb{R}$ defined by
\[c(z)=\begin{cases}
\cos(\sqrt{c_1c_2}z)&\text{if }c_1c_2>0,\\
1&\text{if }c_1c_2=0,\\
\cosh(\sqrt{-c_1c_2}z)&\text{if }c_1c_2<0,
\end{cases}\qquad
s(z)=\begin{cases}
\frac{\sin(\sqrt{c_1c_2}z)}{\sqrt{c_1c_2}}&\text{if }c_1c_2>0,\\
z&\text{if }c_1c_2=0,\\
\frac{\sinh(\sqrt{-c_1c_2}z)}{\sqrt{-c_1c_2}}&\text{if }c_1c_2<0.
\end{cases}\]
Observe that $c$ and $s$ are the unique solutions to the \textsc{ode} system given by $c'(z)=-c_1c_2 s(z)$ and $s'(z)=c(z)$ with initial conditions $c(0)=1$ and $s(0)=0$, which comes in handy for computations. This way, it is easy to check that
\begin{equation}\label{eqn:G-frame}
	\begin{aligned}
		E_1 & = \tfrac{1}{\lambda(x,y)}\left(c(z)\partial_x + c_2s(z)\partial_y\right)+\tfrac{c_3}{2}\left(c_2x\,s(z)- y\, c(z)\right)\partial_z,\\
		E_2 & = \tfrac{1}{\lambda(x,y)}\left(-c_1s(z)\partial_x + c(z)\partial_y\right)+\tfrac{c_3}{2}\left(x\,c(z)+ y\,c_1s(z)\right)\partial_z,\\
		E_3 & = \partial_z,
	\end{aligned}\end{equation}
define a global frame in $D\times\R$ satisfying~\eqref{eqn:lie-bracket-unimodular-frame}. Therefore, we shall also consider the unique semi-Riemannian metric in $D\times\R$ with $\langle E_i, E_i\rangle = \epsilon_i$, which makes $D\times\R$ locally isometric to the unimodular metric Lie group $G$ of diagonalizable type with structure constants $c_1,c_2,c_3$ and signs $\epsilon_1,\epsilon_2,\epsilon_3$ (note that $\{E_1,E_2,E_3\}$ becomes left-invariant through this local isometry). The metric in $D\times\R$ can be written as 
\begin{equation}\label{eqn:CartanMetric}
\begin{aligned}
	\df s^2&=\epsilon_1\lambda(x,y)^2(c(z)\df x+c_1s(z)\df y)^2+\epsilon_2\lambda(x,y)^2(c_2s(z)\df x-c(z)\df y)^2\\
	&\qquad+\epsilon_3\Bigl(\df z+\tfrac{c_3\lambda(x,y)}{2}(y\,\df x-x\,\df y)\Bigr)^2.
\end{aligned}
\end{equation}

\begin{remark}\label{rmk:extension-previous-metrics}
If $c_1=c_2=\frac{\kappa}{2\epsilon_3\tau}$, $c_3=2\epsilon_3\tau$ and $\epsilon_1=\epsilon_2=1$, this model coincides with the Cartan model for $\E(\kappa,\tau)$-spaces ($\epsilon_3=1$), which was revisited and popularized by Daniel~\cite{Daniel07}, and generalized by Lee~\cite{Lee} to obtain their Lorentzian counterparts, namely the $\mathbb{L}(\kappa,\tau)$-spaces ($\epsilon_3=-1$). More explicitly, the metric in~\eqref{eqn:CartanMetric} becomes
\begin{equation}\label{eqn:Ekt-Lkt-metric}\df s^2=\lambda^2(\df x^2+\df y^2)+\epsilon_3\left(\df z+\epsilon_3\tau\lambda(y\,\df x-x\,\df y)\right)^2,
\end{equation}
where $\lambda=\bigl(1+\tfrac{\kappa}{4}(x^2+y^2)\bigr)^{-1}$. The sign of $\tau$ is chosen to agree with the definition of bundle curvature with the standard orientation and unit Killing $\partial_z$ (cf.~\cite[Ex.~1.4]{DLM}).

On the contrary, by setting $c_1=-c_2=\frac{-\kappa}{2\tau}$, $c_3=2\tau$, and signs $\epsilon_1=\epsilon_3=1,\epsilon_2=-1$, we find the family $\widehat{\mathbb{L}}(\kappa,\tau)$ that has shown up in Proposition~\ref{prop:dim-iso-4-6}, with metrics
\begin{equation}\label{eqn:darkLkt-metric}
	\df s^2=\lambda^2(\df x^2-\df y^2)+\left(\df z+\tau\lambda(y\,\df x-x\,\df y)\right)^2,
\end{equation}
where $\lambda=(1+\frac{\kappa}{4}(x^2-y^2))^{-1}$. Notice that $E_1=\tfrac{1}{\lambda}\partial_x-\tau y\partial_z$, $E_2=\tfrac{1}{\lambda}\partial_y+\tau x\partial_z$ and $E_3=\partial_z$ form a global orthonormal frame with respect to~\eqref{eqn:darkLkt-metric}, where $E_2$ is timelike and $E_3$ is a spacelike unit Killing vector field. The natural submersion $(x,y,z)\mapsto (x,y)$ is semi-Riemannian with constant bundle curvature $\tau=\frac{1}{2}\langle[E_1,E_2],E_3\rangle$ (cf.~\cite[Eqn.~(2.10)]{DLM}) over the base surface $D$ equipped with the Lorentzian metric $\lambda^2(\df x^2-\df y^2)$ of constant curvature $\kappa$. This calculation of the curvature follows from~\cite[Prop.~3.44]{Oneill}, for we can work out
\[K_D=\tfrac{-1}{\lambda^2}\left(\bigl(\tfrac{\lambda_x}{\lambda}\bigr)_x-\bigl(\tfrac{\lambda_y}{\lambda}\bigr)_y\right)=\kappa.\]
\end{remark}

\begin{remark}\label{rmk:analyticity}
The functions $c$ and $s$ depend analytically on the parameters $c_1$ and $c_2$. Indeed, by considering the auxiliary entire analytic functions
\[\alpha(t)=\sum_{n=0}^\infty\frac{(-1)^nt^n}{(2n)!},\qquad \beta(t)=\sum_{n=0}^\infty\frac{(-1)^nt^n}{(2n+1)!},\]
it follows that $c(z)=c_1c_2z\,\alpha(c_1c_2z^2)$ and $s(z)=z\,\beta(c_1c_2z^2)$. Thus, the whole family of metrics~\eqref{eqn:CartanMetric} depends analytically on the parameters $c_1,c_2,c_3\in\mathbb{R}$.
\end{remark}

Our approach to these models is in turn based on the canonical frame for Killing submersions, which uses $\partial_z$ as the distinguished Killing vector field, see~\cite{Man14,DLM}. The algebraic Killing vector fields in a metric Lie group are the right-invariant vector fields; in constrast, our model for metric Lie groups takes $\partial_z$ as a left-invariant vector field that diagonalizes the Ricci operator. Our approach has the advantage that even in the same Lie group we have three models depending on which of the directions $E_1,E_2,E_3$ is chosen to become $\partial_z$, and this amounts to rotate the indexes cyclically. The rest of this subsection will be devoted to analyze if the models are global. To this end, we will begin by discussing the relation between our models and the traditional models for the three big families of unimodular Lie groups (namely, $\widetilde{\mathrm{SL}}_2(\R)$, $\mathrm{SU}(2)$ and $\R^2\rtimes_A\R$).

\begin{example}[Special linear group]\label{ex:SL2R}
If $c_1,c_2>0$ and $c_3<0$, then $G$ is $\widetilde{\mathrm{SL}}_2(\R)$, the universal cover of $\mathrm{SL}_2(\R)$, equipped with a left-invariant metric. The group $\mathrm{SL}_2(\R)$ is usually represented by the matrices $(\begin{smallmatrix}a&b\\\bar{b}&\bar{a}\end{smallmatrix})\in\mathcal M_2(\C)$ with determinant $1$ (the product in $\mathrm{SL}_2(\R)$ is just the product of matrices), so that it can be identified with the quadric $Q\equiv \{(a,b)\in\C^2:|a|^2-|b|^2=1\}$ that represents the unit sphere in $\R^4_2$. The frame given by $X_1=\frac{1}{2}\sqrt{c_2c_3}(i\bar{b},i\bar{a})$, $X_2=\frac{1}{2}\sqrt{c_1c_3}(\bar{b},\bar{a})$ and $X_3=\frac{1}{2}\sqrt{c_2c_3}(ia,ib)$ is left-invariant with $[X_1,X_2]=c_3X_3$, $[X_2,X_3]=c_1X_1$ and $[X_3,X_1]=c_2X_2$ and the map $F:D\times\R\to Q$ given by
\begin{equation}\label{eqn:isometry-cylinder-quadric-SL}
F(x,y,z)=\left(\frac{e^{\frac{i}{2}\sqrt{c_1c_2}z}}{\sqrt{1+\frac{c_3}{4}(c_2x^2+c_1y^2)}},\frac{(\sqrt{-c_1c_3}y+i\sqrt{-c_2c_3}x)e^{\frac{i}{2}\sqrt{c_1c_2}z}}{\sqrt{1+\frac{c_3}{4}(c_2x^2+c_1y^2)}}\right)
\end{equation}
is a Riemannian covering map with $F_*E_i=X_i$. This means that $D\times\R$ is globally isometric to $\widetilde{\mathrm{SL}}_2(\R)$.
\end{example}

\begin{example}[Special unitary group]\label{ex:SU2}
If $c_1,c_2,c_3>0$, then $G$ is isomorphic and isometric to $\SU(2)$ with a left-invariant metric. This group contains (by definition) the unitary matrices $(\begin{smallmatrix}a&b\\-\bar{b}&\bar{a}\end{smallmatrix})\in\mathcal M_2(\C)$ with determinant $1$, when the product of matrices is considered. This yields an identification with $\mathbb{S}^3\equiv\{(a,b)\in\C^2:|a|^2+|b|^2=1\}$, where the vector fields $X_1=\frac{1}{2}\sqrt{c_2c_3}(-i\bar{b},i\bar{a})$, $X_2=\frac{1}{2}\sqrt{c_1c_3}(-\bar{b},\bar{a})$ and $X_3=\frac{1}{2}\sqrt{c_2c_3}(ia,ib)$ become left-invariant with $[X_1,X_2]=c_3X_3$, $[X_2,X_3]=c_1X_1$ and $[X_3,X_1]=c_2X_2$. The map $F:D\times\R\to\mathbb{S}^3-\{(e^{i\theta},0):\theta \in \mathbb{R}\}$ given by
\begin{equation}\label{eqn:isometry-cylinder-quadric-SU}
F(x,y,z)=\left(\frac{e^{\frac{i}{2}\sqrt{c_1c_2}z}}{\sqrt{1+\frac{c_3}{4}(c_2x^2+c_1y^2)}},\frac{(\sqrt{c_1c_3}y+i\sqrt{c_2c_3}x)e^{\frac{i}{2}\sqrt{c_1c_2}z}}{\sqrt{1+\frac{c_3}{4}(c_2x^2+c_1y^2)}}\right)
\end{equation} is again a Riemannian covering with $F_*E_i=X_i$. Observe that~\eqref{eqn:isometry-cylinder-quadric-SU} and~\eqref{eqn:isometry-cylinder-quadric-SL} differ just by the signs in the radicands and this makes the difference between taking values in $\mathbb{S}^3$ or in $Q$. Note also that $\{(e^{i\theta},0):\theta \in \mathbb{R}\}$ is just an integral curve of $X_3$, whence our cylinder model $D\times\R$ consists in removing such a curve from $\SU(2)$ and then considering the universal cover.
\end{example}

\begin{example}[Semidirect products]\label{ex:semidirect-product} If $c_3=0$ and $c_1,c_2\in\mathbb{R}$ are arbitrary real numbers, then $G$ is isomorphic to the semidirect product $\R^2\rtimes_{A}\R$, with $A=(\begin{smallmatrix}0&-c_1\\c_2&0\end{smallmatrix})$. This matrix can be exponentiated as
\[e^{zA}=\begin{pmatrix}c(z)&-c_1s(z)\\c_2s(z)&c(z)\end{pmatrix}.\]
In view of~\cite[\S2.2]{MeeksPerez12}, this means that $X_1=c(z)\partial_x+c_2s(z)\partial_y$, $X_2=-c_1s(z)\partial_x+c(z)\partial_y$ and $X_3=\partial_z$ define a global left-invariant frame for $G$, viewed as $\R^3\equiv \R^2\rtimes_{A}\R$. Since $c_3=0$, we get from~\eqref{eqn:G-frame} that $X_i=E_i$ for all $i$, whence the metric~\eqref{eqn:CartanMetric} coincides with the standard metric as a semidirect product.
\end{example}

\begin{proposition}\label{prop:global-model}
The cylinder model $D\times\mathbb{R}$ endowed with the generalized Cartan metric~\eqref{eqn:CartanMetric} is a global model if and only if $c_1c_3\leq 0$ and $c_2c_3\leq 0$.
\end{proposition}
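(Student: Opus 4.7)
The plan is to analyze the canonical local isometry $F:D\times\mathbb{R}\to G$ obtained by identifying the global frame $\{E_1,E_2,E_3\}$ from~\eqref{eqn:G-frame} with the corresponding left-invariant frame on $G$: $F$ is determined by sending $(0,0,0)$ to the identity of $G$, and its existence as a well-defined local isometry follows from simple connectedness of $D\times\mathbb{R}$ together with the matching Lie brackets~\eqref{eqn:lie-bracket-unimodular-frame}. Since $G$ is simply connected, $F$ is a global isometry (equivalently, the model is global) if and only if $F$ is a covering map, i.e., $(D\times\mathbb{R},\df s^2)$ is geodesically complete.

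For the \emph{necessary} direction, assume $c_1c_3>0$ (the case $c_2c_3>0$ is analogous by symmetry). From~\eqref{eqn:nabla-unimodular}, $\overline\nabla_{E_2}E_2=0$, so the integral curve of $E_2$ through $(0,0,0)$ with initial velocity $\partial_y$ is a geodesic. Using~\eqref{eqn:G-frame}, $E_2$ restricts to $\lambda(0,y)^{-1}\partial_y$ on the slice $\{x=z=0\}$, which is therefore flow-invariant, and the geodesic satisfies the Riccati equation $\dot y=\lambda(0,y)^{-1}=1+\tfrac{c_1c_3}{4}y^2$. Its solution $y(t)=\tfrac{2}{\sqrt{c_1c_3}}\tan\bigl(\tfrac{\sqrt{c_1c_3}}{2}t\bigr)$ blows up at the finite parameter $t=\pi/\sqrt{c_1c_3}$, so the model is not geodesically complete and $F$ is not a diffeomorphism.

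For the \emph{sufficient} direction, assuming $c_1c_3\leq 0$ and $c_2c_3\leq 0$, I distinguish cases by the underlying Lie group following Table~\ref{fig:unimodular-mlg}. If $c_3=0$, the hypothesis is automatic and Example~\ref{ex:semidirect-product} exhibits $F=\id$ as a global isometry onto the standard semidirect product $\R^2\rtimes_A\R$. If $c_3\neq0$ and $c_1,c_2$ are both nonzero with sign opposite to $c_3$, WLOG $c_1,c_2>0>c_3$, and Example~\ref{ex:SL2R} provides the explicit Riemannian covering $F:D\times\R\to\SL_2(\R)$ that lifts to a global isometry with $\widetilde{\SL}_2(\R)$ by simple connectivity. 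The remaining degenerate cases---where $c_3\neq0$ and exactly one of $c_1,c_2$ vanishes, corresponding to $\widetilde{\mathrm{E}}(2)$ or $\Nil$---are handled by a direct construction paralleling the previous examples after a suitable cyclic relabeling of $(E_1,E_2,E_3)$.

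The main obstacle is ensuring the sufficient direction is fully rigorous in the degenerate cases: the explicit formulas of Examples~\ref{ex:SL2R} and~\ref{ex:SU2} become singular when some radicand vanishes, so one cannot simply specialize them. The cleanest fix is to invoke the analyticity of the metric and frame in the parameters afforded by Remark~\ref{rmk:analyticity}, which allows one to extend the conclusion from the generic region $\{c_1c_3<0,\,c_2c_3<0\}$ (covered directly by Example~\ref{ex:SL2R}) to its closure intersected with our sufficient region by a continuity argument.
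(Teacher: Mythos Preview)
Your necessary direction is essentially the paper's argument: the integral curve of $E_2$ through the origin blows up in finite time when $c_1c_3>0$. One caveat: you phrase the conclusion via geodesic incompleteness, but in the Lorentzian signatures the target group $G$ itself can be geodesically incomplete, so ``incomplete $\Rightarrow$ not global'' is not automatic. The robust formulation (which the paper uses) is that $E_2$ is an incomplete \emph{vector field}, and left-invariant vector fields on any Lie group are complete; hence $D\times\R$ cannot be a group model.

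The genuine gap is in your sufficient direction, specifically the degenerate boundary cases. Your proposed fix---extend the conclusion from the open region $\{c_1c_3<0,\ c_2c_3<0\}$ to its closure by analyticity of the metric in the parameters---does not work. Being a global model is not a closed condition under analytic variation of the metric: a limit of complete metrics need not be complete, and more fundamentally the underlying simply connected Lie group $G$ changes discontinuously as the structure constants degenerate (from $\widetilde{\SL}_2(\R)$ to a semidirect product), so there is no fixed target in which to take a limit of the maps $F$. The explicit formula~\eqref{eqn:isometry-cylinder-quadric-SL} visibly breaks down when a radicand vanishes. Your alternative suggestion of ``cyclic relabeling'' does not salvage this either, because the degenerate case $c_1=0$, $c_2c_3<0$ is $\mathrm{Sol}_3$ (not $\widetilde{\mathrm{E}}(2)$ or $\Nil$ as you state), and no relabeling turns it into one of the already-treated cases.

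The paper handles these boundary cases by explicit construction: for $c_1=c_2=0$, $c_3\neq0$ it observes that $D\times\R=\R^3$ with the standard $\Nil$ frame; for $c_1=0$, $c_2>0$, $c_3<0$ it writes down a concrete diffeomorphism $F:D\times\R\to\R^3$ and checks that $F_*E_i$ reproduces the canonical semidirect-product frame for $\R^2\rtimes_A\R$ with $A=\bigl(\begin{smallmatrix}0&-c_2\\c_3&0\end{smallmatrix}\bigr)$. You need some such explicit map; the limiting argument cannot replace it.
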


\begin{proof}
If $c_1c_3>0$, then $\gamma(t)=(0,\frac{2}{\sqrt{c_1c_3}}\tan(\frac{1}{2}\sqrt{c_1c_3}t),0)$ is an integral curve of $E_2$, and hence has unit speed. It is defined only for $|t|<\frac{\pi}{\sqrt{c_1c_3}}$ but it clearly diverges in the model, whence $E_2$ is not a complete vector field, and then the model is not global. A similar argument works if $c_2c_3>0$ by taking $\gamma(t)=(\frac{2}{\sqrt{c_2c_3}}\tan(\frac{1}{2}\sqrt{c_2c_3}t),0,0)$ instead, which is an integral curve of $E_1$. Hence, we will assume that $c_1c_3\leq 0$ and $c_2c_3\leq 0$, and prove that the model is global.

If $c_3=0$, then $D\times\mathbb{R}=\R^2\times\mathbb{R}$ is globally isometric to a semidirect product with a left-invariant metric, and hence we are done (see Example~\ref{ex:semidirect-product}). Next, by changing all the signs of the $c_i$, we can assume that $c_3<0$ and $c_1,c_2\geq 0$. On the one hand, if $c_1>0$ and $c_2>0$, then we have seen in Example~\ref{ex:SL2R} that $D\times\mathbb{R}$ is globally isometric to $\widetilde{\mathrm{SL}}_2(\R)$, so we are done too. On the other hand, if $c_1=c_2=0$, then $D\times\R$ is $\R^3$ with the global orthonormal frame $E_1=\partial_x-\frac{c_3}{2}y\partial_z$, $E_2=\partial_y+\frac{c_3}{2}x\partial_z$ and $E_3=\partial_z$. This is the standard model for $\Nil(-\frac{c_3}{2})=\E(0,-\frac{c_3}{2})$, which is also global. We are left with the case $c_1=0$ and $c_2>0$ ($c_1>0$ and $c_2=0$ can be reasoned similarly). Note that this means $D\times\R$ is the slab of $\R^3$ defined by the inequality $1+\frac{c_2c_3}{4}x^2>0$. The map $F:D\times\R\to\R^3$ given by
	\[F(x,y,z)=\left(\frac{y-c_2xz}{1+\frac{c_2c_3}{4}x^2},\frac{\frac{c_3}{2}xy+z-\frac{c_2c_3}{4}x^2z}{1+\frac{c_2c_3}{4}x^2},\frac{2\arctanh(\tfrac{1}{2}\sqrt{-c_2c_3}x)}{\sqrt{-c_2c_3}}\right)\]
	is a bijection that takes the orthonormal frame $\{E_1,E_2,E_3\}$ in~\eqref{eqn:G-frame} into
	\begin{align*}
		F_*E_1&=\cosh(\sqrt{-c_2c_3}z)\partial_x+c_3\tfrac{\sinh(\sqrt{-c_2c_3}z)}{\sqrt{-c_2c_3}}\partial_y,\\
		F_*E_2&=-c_2\tfrac{\sinh(\sqrt{-c_2c_3}z)}{\sqrt{-c_2c_3}}\partial_x+\cosh(\sqrt{-c_2c_3}z)\partial_y,\\
		F_*E_3&=\partial_z.
	\end{align*}
	By comparison with Example~\ref{ex:semidirect-product} after rotating the indexes, $F$ is a global isometry from $D\times\R$ to the semidirect product $\R^2\rtimes_A\R$ with its standard metric, being $A=(\begin{smallmatrix}0&-c_2\\c_3&0\end{smallmatrix})$. We conclude that $D\times\R$ is also a global model in this case.
\end{proof}

Note that Riemannian metric Lie groups are always (geodesically) complete, so we can talk of completeness or globality, equally. However, this question is trickier in the Lorentzian world, in which there exist incomplete examples of metric Lie groups. This does affect our family of spaces, since some left-invariant metrics of diagonalizable type in $\mathrm{SL}_2(\R)$ are actually incomplete, see the recent work of Elshafei--Ferreira--Reis~\cite{ElsFerRei} and the references therein. Anyway, we will not use completeness in our arguments since most of them are purely local.

\section{The fundamental theorem}\label{sec:fundamental}

Consider an isometric immersion $\phi:\Sigma\to G$ of an oriented semi-Riemannian (non-degenerate) surface $\Sigma$ into the unimodular metric Lie group $G$ of diagonalizable type with structure constants $c_1,c_2,c_3$ and signs $\epsilon_1,\epsilon_2,\epsilon_3$. Let $\{e_1,e_2\}$ be a positively oriented orthonormal frame of $\Sigma$, where we will write $\hat\epsilon_i=\langle e_i,e_i\rangle$ for $i\in\{1,2\}$. This frame can be completed to a positively oriented orthonormal frame $\{e_1,e_2,N\}$, where we identify $T_p\Sigma$ and $\df\phi_p(T_p\Sigma)\subset T_{\phi(p)}G$ as usual. We will also consider $\hat\epsilon_3=\langle N,N\rangle$, the so called \emph{sign} of the immersion. Since both $\{e_1,e_2,N\}$ and $\{E_1,E_2,E_3\}$ are positively oriented, we find that $\{\hat\epsilon_1,\hat\epsilon_2,\hat\epsilon_3\}$ is a cyclic permutation of $\{\epsilon_1,\epsilon_2,\epsilon_3\}$. It is indeed possible to relabel indexes so $\hat\epsilon_i=\epsilon_i$, but until Section~\ref{sec:change-of-frame} we will keep alive both sets of signs to get fundamental equations that apply easily to the particular cases we are interested in:
\begin{itemize}
	\item Riemannian case: $(\hat\epsilon_1,\hat\epsilon_2,\hat\epsilon_3)=(\epsilon_1,\epsilon_2,\epsilon_3)=(1,1,1)$.
	\item Lorentzian spacelike case: $(\hat\epsilon_1,\hat\epsilon_2,\hat\epsilon_3)=(\epsilon_1,\epsilon_2,\epsilon_3)=(1,1,-1)$.
	\item Lorentzian timelike case: $(\hat\epsilon_1,\hat\epsilon_2,\hat\epsilon_3)=(1,-1,1)$ and $(\epsilon_1,\epsilon_2,\epsilon_3)=(1,1,-1)$.
\end{itemize}

The orientation-depending features will be in turn captured by the operator $J$ in the tangent bundle defined by $Jv=N\times v$ for all $v\in T_p\Sigma$ and $p\in\Sigma$, which satisfies $Je_1 = \hat{\epsilon}_2 e_2$ and $Je_2 = -\hat{\epsilon}_1 e_1$, and hence $J^2=-\hat{\epsilon}_1\hat{\epsilon}_2\mathrm{id}$. If $\hat{\epsilon}_1\hat{\epsilon}_2=1$, then $\Sigma$ is spacelike and $J$ is the usual $\frac\pi2$-rotation, whence $\{v,Jv,N\}$ is positively oriented for all non-zero $v$; if $\hat\epsilon_1\hat\epsilon_2=-1$, then $\Sigma$ is timelike and $J$ becomes an axial symmetry, so the orientation of $\{v,Jv,N\}$ depends on the causality of $v$ and the signs of $\hat\epsilon_1$ and $\hat\epsilon_2$ (it might even happen that $Jv=\pm v$ when $v$ is lightlike, in which case $\{v,Jv,N\}$ is not even a basis). Notice that $J$ is not an isometry of $T_p\Sigma$ if $\hat\epsilon_1\hat\epsilon_2=-1$ since we have $\langle Ju, Jv\rangle = \hat\epsilon_1 \hat\epsilon_2 \langle u, v\rangle$ for any tangent vectors $u$ and $v$.

\subsection{The fundamental data}\label{subsec:fundamental-data}
The tangent projections of the standard left-invariant vector fields and their angle functions are defined by
\begin{equation}\label{eqn:T-nu-definition}
T_i=E_i^\top=E_i-\hat\epsilon_3\nu_iN,\qquad \nu_i=\langle N,E_i\rangle,\qquad\text{for all }i\in\{1,2,3\}.
\end{equation}
Observe that $N=\sum_{i=1}^3\epsilon_i\nu_iE_i$ so these elements are not arbitrary, that is, they must satisfy some relations that follow from elementary linear algebra:
\begin{equation}\label{eqn:algebraic-relations}
\begin{aligned}
&\textstyle\sum_{i=1}^3\epsilon_i\nu_i^2=\langle N,N\rangle=\hat\epsilon_3,\qquad \textstyle\sum_{i=1}^3\epsilon_i\nu_iT_i=N^\top=0,\\
&\langle T_i,T_j\rangle = \epsilon_i\delta_i^j-\hat\epsilon_3\nu_i\nu_j,\quad \text{for all }i,j\in\{1,2,3\}.
\end{aligned}\end{equation}
From these relations and the fact that $JT_i=N\times E_i$, it is easy to show that 
\begin{equation}\label{eqn:JEi}
\begin{aligned}
JT_1&=\epsilon_2\epsilon_3(\nu_3T_2-\nu_2T_3),\\
JT_2&=\epsilon_3\epsilon_1(\nu_1T_3-\nu_3T_1),\\
JT_3&=\epsilon_1\epsilon_2(\nu_2T_1-\nu_1T_2).
\end{aligned}
\end{equation}

We will revisit briefly some basic facts about semi-Riemannian immersions, all of which are discussed by O'Neill~\cite[Ch.~3 and~4]{Oneill}, towards Gauss and Codazzi equations. The second fundamental form $\sigma$ of the immersion is the normal component of the connection, that is, we have
\[\overline\nabla_XY=\nabla_XY+\sigma(X,Y),\quad\text{for all }X,Y\in\mathfrak X(\Sigma),\]
and the non-degeneracy of the metric allows to write $\langle\sigma(X,Y),N\rangle=\langle SX,Y\rangle$, where $S$ is a smooth field of linear operators in $\Sigma$ called the \emph{shape operator} (associated to $N$). It follows that
\[S_p:T_p\Sigma\to T_p\Sigma,\qquad S_p(v) = -\overline\nabla_{v}N,\quad \text{for all } v\in T_p\Sigma.\]
This linear map is self-adjoint with respect to $\langle\cdot,\cdot\rangle$, but not necessarily diagonalizable if $\Sigma$ is Lorentzian. However, we can define the extrinsic and the mean curvature of the immersion, as the determinant and a half of the trace of $S$, respectively:
\begin{equation}\label{eqn:det-trace-A}
\begin{aligned}
\det(S)&=\hat\epsilon_1\hat\epsilon_2(\langle Se_1,e_1\rangle\langle Se_2,e_2\rangle-\langle Se_1,e_2\rangle^2),\\ 
H&=\tfrac{\hat\epsilon_3}{2}(\hat\epsilon_1\langle Se_1,e_1\rangle+\hat\epsilon_2\langle Se_2,e_2\rangle).
\end{aligned}\end{equation}
The sign of $H$ is chosen so that the mean curvature vector reads $\vec{H}=HN$. Note also that the traceless shape operator is $S-\hat\epsilon_3 H\,\mathrm{id}$, which will be useful later. The norm of the second fundamental form is $|\sigma|^2 = \sum_{i=1}^2 \hat\epsilon_i \langle S^2 e_i, e_i\rangle= 4H^2-2\det(S)$. The Gauss curvature is the intrinsic sectional curvature of the tangent plane, given by $K=\hat\epsilon_1\hat\epsilon_2\langle R(e_1,e_2)e_2,e_1\rangle$, where $R$ is the Riemann curvature tensor of the surface $\Sigma$. In this sense, the Gauss equation for this immersion reads
\begin{equation}\label{eqn:Gauss}
\begin{aligned}
K&=\hat\epsilon_1\hat\epsilon_2\langle\overline R(e_1,e_2)e_2,e_1\rangle+\hat\epsilon_3\det(S)\\
&=\hat\epsilon_3\det(S)-\hat\epsilon_1\hat\epsilon_2 \left(a_1\nu_1^2+a_2\nu_2^2+a_3\nu_3^2\right),\end{aligned}
\end{equation}
where we have considered the constant coefficients of the curvature tensor
\begin{equation}\label{eqn:ai}
a_1=\epsilon_1\mu_2\mu_3-c_1\mu_1,\qquad a_2=\epsilon_2\mu_1\mu_3-c_2\mu_2,\qquad a_3=\epsilon_3\mu_1\mu_2-c_3\mu_3.
\end{equation}
Note that~\eqref{eqn:Gauss} follows from Lemma~\ref{lemma:R}, for we can work out
\begin{align*}
\langle\overline R_i(e_1,e_2)e_2,e_1\rangle&=-\hat\epsilon_1\hat\epsilon_2+\epsilon_i\hat\epsilon_1\langle e_2,T_i\rangle^2+\epsilon_i\hat\epsilon_2\langle e_1,T_i\rangle^2\\
&=\hat\epsilon_1\hat\epsilon_2(-1+\epsilon_i\langle\hat\epsilon_2\langle e_2,T_i\rangle e_2+\hat\epsilon_1\langle e_1,T_i\rangle e_1,T_i\rangle)\\
&=\hat\epsilon_1\hat\epsilon_2(-1+\epsilon_i\langle T_i,T_i\rangle)=-\hat\epsilon_1\hat\epsilon_2\hat\epsilon_3\epsilon_i\nu_i^2.
\end{align*}
A similar computation, again by Lemma~\ref{lemma:R}, gives Codazzi equation:
\begin{equation}\label{eqn:codazzi}
\nabla_XSY-\nabla_YSX-S[X,Y]=-\overline R(X,Y)N=\epsilon_1\epsilon_2\epsilon_3(\langle X,T\rangle Y-\langle Y,T\rangle X),
\end{equation}
where we have employed the auxiliary tangent vector field
\begin{equation}\label{eqn:T}
\begin{aligned}
	T&=a_1\nu_1 T_1+a_2\nu_2 T_2+a_3\nu_3T_3\\
   &=2\epsilon_1\mu_2\mu_3\nu_1 T_1+2\epsilon_2\mu_3\mu_1\nu_2 T_2+2\epsilon_3\mu_1\mu_2\nu_3 T_3.
\end{aligned}
\end{equation}
The second expression for $T$ in~\eqref{eqn:T} can be deduced by writing the $c_i$ in terms of the $\mu_i$ and using the algebraic relations~\eqref{eqn:algebraic-relations}.

\begin{proposition}\label{prop:compatibility}
Let $\Sigma$ be an orientable Riemannian or Lorentzian surface isometrically immersed into the unimodular metric Lie group $G$ of diagonalizable type. The tuple $(S,J,T_1,T_2,T_3,\nu_1,\nu_2,\nu_3)$ satisfies the following conditions:
\begin{enumerate}[label=$($\textsc{\roman*}$)$]
	\item Gauss equation: $K=\hat\epsilon_3\det(S)-\hat\epsilon_1\hat\epsilon_2 (a_1\nu_1^2+a_2\nu_2^2+a_3\nu_3^2)$, \label{eqn:comp-i}
	\item Codazzi equation: $\nabla_XSY-\nabla_YSX-S[X,Y]=\epsilon_1\epsilon_2\epsilon_3(\langle X,T\rangle Y-\langle Y,T\rangle X)$,\label{eqn:comp-ii}
	\item Algebraic relations: $\langle T_i,T_j\rangle=\epsilon_i\delta^j_i-\hat\epsilon_3\nu_i\nu_j$, for all $i,j\in\{1,2,3\}$,\label{eqn:comp-iii}

  \item $\nabla_X T_1=\hat\epsilon_3\nu_1SX+\epsilon_2\epsilon_3(\mu_3\langle X,T_3\rangle T_2-\mu_2\langle X,T_2\rangle T_3)$,\label{eqn:comp-iv}

  \noindent $\nabla_X T_2=\hat\epsilon_3\nu_2SX+\epsilon_1\epsilon_3(\mu_1\langle X,T_1\rangle T_3-\mu_3\langle X,T_3\rangle T_1)$,

  \noindent $\nabla_X T_3=\hat\epsilon_3\nu_3SX+\epsilon_1\epsilon_2(\mu_2\langle X,T_2\rangle T_1-\mu_1\langle X,T_1\rangle T_2)$, for all $X\in\mathfrak{X}(\Sigma)$,

  \item $\nabla\nu_1=-ST_1+\epsilon_2\epsilon_3(\mu_3\nu_2 T_3-\mu_2\nu_3 T_2)$,\label{eqn:comp-v}

  \noindent $\nabla\nu_2=-ST_2+\epsilon_1\epsilon_3(\mu_1\nu_3 T_1-\mu_3\nu_1 T_3)$,

  \noindent $\nabla\nu_3=-ST_3+\epsilon_1\epsilon_2(\mu_2\nu_1 T_2-\mu_1\nu_2 T_1)$.
\end{enumerate}
\end{proposition}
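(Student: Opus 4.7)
The plan is to observe that items (i), (ii), and (iii) are essentially established already in the discussion preceding the statement. The Gauss equation (i) is displayed as~\eqref{eqn:Gauss}, obtained by plugging Lemma~\ref{lemma:R} into the standard identity $K - \hat\epsilon_3\det(S) = \hat\epsilon_1\hat\epsilon_2\langle \overline R(e_1,e_2)e_2,e_1\rangle$; the Codazzi equation (ii) is displayed as~\eqref{eqn:codazzi}, obtained from the analogous identity $\nabla_X SY-\nabla_Y SX-S[X,Y]=-\overline R(X,Y)N$ after using Lemma~\ref{lemma:R} to unpack the right-hand side and grouping terms via the auxiliary vector field $T$ in~\eqref{eqn:T}; and the algebraic relations (iii) are gathered in~\eqref{eqn:algebraic-relations} as a consequence of $N=\sum_i\epsilon_i\nu_iE_i$ and the orthonormality of $\{E_1,E_2,E_3\}$. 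Only (iv) and (v) require a new argument, and both follow from a single bookkeeping calculation.

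The strategy for (iv) and (v) is to compute $\overline\nabla_X E_i$ in two different ways and equate the tangential and normal components. On one hand, the decomposition $E_i=T_i+\hat\epsilon_3\nu_i N$ together with the Gauss formula of the immersion, the Weingarten equation $\overline\nabla_X N=-SX$, and $\sigma(X,Y)=\hat\epsilon_3\langle SX,Y\rangle N$ yields
\[\overline\nabla_X E_i = \nabla_X T_i - \hat\epsilon_3 \nu_i SX + \hat\epsilon_3\bigl(\langle SX,T_i\rangle + X(\nu_i)\bigr)N.\]
On the other hand, since $X$ is tangent to $\Sigma$ I can expand $X=\sum_j \epsilon_j\langle X,T_j\rangle E_j$ and apply the Levi-Civita formulas~\eqref{eqn:nabla-unimodular} term by term, obtaining a linear combination of the $E_j$'s whose coefficients involve the $\mu_j$'s and the $\langle X,T_j\rangle$'s; re-expressing each $E_j=T_j+\hat\epsilon_3\nu_j N$ then exhibits the tangent and normal parts. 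Matching tangent parts produces the $\nabla_X T_i$ formula of~(iv), and matching normal parts, together with the self-adjointness $\langle SX,T_i\rangle=\langle X,ST_i\rangle$, produces the $\nabla\nu_i$ formula of~(v). It is enough to carry out the case $i=1$ in detail, where the only nonzero contributions from~\eqref{eqn:nabla-unimodular} are $\overline\nabla_{E_2}E_1=-\epsilon_3\mu_2 E_3$ and $\overline\nabla_{E_3}E_1=\epsilon_2\mu_3 E_2$; the remaining cases $i=2,3$ follow by cyclically rotating indices, noting that the statement itself is written in a manifestly cyclic way.

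The main obstacle is not conceptual but notational: the signature constants $\epsilon_1,\epsilon_2,\epsilon_3$ of the ambient, the sign $\hat\epsilon_3$ of the immersion, the constants $\mu_1,\mu_2,\mu_3$ encoded in~\eqref{eqn:nabla-unimodular}, and the tangent/normal projection all interact in a way that is easy to mis-sign. A safe route is to verify~(iv) and~(v) for $i=1$ symbolically, check consistency of the tangent identity by pairing it with $T_j$ for $j=1,2,3$ and of the normal identity by using~\eqref{eqn:algebraic-relations}, and then invoke cyclic symmetry rather than redo the computation for $i=2,3$.
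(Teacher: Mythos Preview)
Your proposal is correct and follows essentially the same approach as the paper's proof: items (i)--(iii) are referred back to the preceding discussion, and (iv)--(v) are obtained by computing $\overline\nabla_X E_i$ in two ways---once via $E_i=T_i+\hat\epsilon_3\nu_i N$ and the Gauss--Weingarten formulas, once via $\overline\nabla_X E_i=\sum_j\epsilon_j\langle X,E_j\rangle\overline\nabla_{E_j}E_i$ and the table~\eqref{eqn:nabla-unimodular}---and matching tangential and normal components. The only cosmetic difference is that you spell out the $i=1$ case and appeal to cyclic symmetry, whereas the paper leaves the expansion implicit.
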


\begin{proof}
Items~\ref{eqn:comp-i},~\ref{eqn:comp-ii} and~\ref{eqn:comp-iii} have been already discussed, so we will focus on~\ref{eqn:comp-iv} and~\ref{eqn:comp-v}. To this end, we will write $\overline{\nabla}_X E_i$ in two different ways. On the one hand,
\begin{equation}\label{prop:compatibility:eqn1}
\begin{aligned}
\overline\nabla_XE_i=\overline{\nabla}_X (T_i + \hat\epsilon_3 \nu_i N) &= \nabla_X T_i + \sigma(X,T_i) + \hat\epsilon_3\langle\nabla\nu_i,X\rangle N + \hat\epsilon_3 \nu_i \overline{\nabla}_X N\\
&= (\nabla_X T_i - \hat\epsilon_3 \nu_i SX) + \hat\epsilon_3(\langle ST_i,X\rangle + \langle \nabla\nu_i,X\rangle) N.
\end{aligned}\end{equation}
On the other hand, $\overline{\nabla}_X E_i = \sum_{j=1}^3 \epsilon_j \langle X, E_j\rangle \overline{\nabla}_{E_j}E_i$. Expanding this last identity by means of~\eqref{eqn:nabla-unimodular} and comparing with~\eqref{prop:compatibility:eqn1}, the tangent components give the equations for $\nabla_X T_i$ whilst the normal components give the equations for $\nabla \nu_i$.
\end{proof}

\begin{definition}
 	The above elements $S,J,T_1,T_2,T_3,\nu_1,\nu_2,\nu_3$ on a non-degenerate surface $\Sigma$ will be called the \emph{fundamental data}. All the equations in Proposition~\ref{prop:compatibility} are necessary to have an immersion and will be called the \emph{fundamental equations}.
 \end{definition} 

 \begin{remark}
 	This definition extends the previous definition in $\mathbb{E}(\kappa,\tau)$-spaces~\cite{Daniel07}. Note that the Riemannian or Lorentzian metric $\df s^2$ of $\Sigma$ is usually assumed explicitly as another datum, and this is also our case since we are prescribing $\Sigma\equiv(\Sigma,\df s^2)$ as a whole, see also~\cite{CMT}. Also, the rotation $J$ has been included to highlight the role of the orientation and how it must be compatible with the definition of the normal; equivalently, one can assume that $\Sigma$ is oriented \emph{a priori}.  

 	Notice that the fundamental equations have a high degree of redundancy, e.g., one can differentiate~\ref{eqn:comp-iii} in the direction of a vector field $X\in\mathfrak{X}(\Sigma)$ to obtain some relations between between~\ref{eqn:comp-iv} and~\ref{eqn:comp-v}. This is similar to the case of $\E(\kappa,\tau)$-spaces as pointed out in~\cite[Rmk.~4.2]{Daniel07}. However, our main goal is to prove that either $T_1,T_2,T_3$ or $\nu_1,\nu_2,\nu_3$ are enough to decide if the immersion exists and to recover it just in terms of some subset of the above compatibility equations.
 \end{remark}

\begin{remark}\label{rmk:stabilizer-data}
Given an isometric immersion $\phi:\Sigma\to G$ and $\Psi\in\Iso(G)$, we get another isometric immersion $\widetilde\phi=\Psi\circ\phi:\Sigma\to G$. If $\Psi$ is a left-translation, then $\phi$ and $\widetilde\phi$ clearly have the same fundamental data. If $\Psi\in\Stab_p$ preserves the orientation and is not the identity, then $\Psi$ is an axial symmetry about one of the directions $\{E_1,E_2,E_3\}$ (see~\cite[Prop.~2.21]{MeeksPerez12}). In our Cartan model, $\Psi(x,y,z)$ is equal to $(x,-y,-z)$, $(-x,y,-z)$ or $(-x,-y,z)$, so $\Psi$ preserves the orientation and the change of fundamental data from $\phi$ to $\widetilde\phi$ is straightforward. For instance, the symmetry about $E_1$ preserves $S$ and $J$ while it changes $(T_1,T_2,T_3)\mapsto(T_1,-T_2,-T_3)$ and $(\nu_1,\nu_2,\nu_3)\mapsto(\nu_1,-\nu_2,-\nu_3)$. The other rotations about $E_2$ and $E_3$ are similar. This observation will simplify some forthcoming discussions.

In the Riemannian case (which will be used in Section~\ref{sec:lawson}), the only space in which there are orientation-reversing isometries in the stabilizers is $\mathrm{Sol}_3$ when a metric homothetic to the standard one is considered (i.e., $c_1=-c_2$ and $c_3=0$). In our model, such an isometry reads $\Psi(x,y,z)=(y,x,z)$, and yields the changes $(T_1,T_2,T_3)\mapsto(T_2,T_1,T_3)$ and $(\nu_1,\nu_2,\nu_3)\mapsto(\nu_2,\nu_1,\nu_3)$. The particularity of this case is that swapping $c_1$ and $c_2$ is the same as changing the signs of all $c_1,c_2,c_3$.
\end{remark}

\subsection{The change-of-frame equation}\label{sec:change-of-frame}
Consider the global orthonormal frame $\{e_1,e_2,e_3\}$ along $\Sigma$, where we will denote $e_3=N$ in what follows and assume that the signs of $\Sigma$ and $G$ agree in the sense that $(\hat\epsilon_1,\hat\epsilon_2,\hat\epsilon_3)=(\epsilon_1,\epsilon_2,\epsilon_3)$. No generality is lost in this assumption for we can always relabel indexes, but it has the side effect that studying spacelike and timelike surfaces in the same space requires two separated cases, as explained at the beginning of Section~\ref{sec:fundamental} (this will actually take place when we distinguish cases A and B in Section~\ref{subsec:daniel-fundamental}).

The frame $\{e_1,e_2,e_3\}$ can be written in terms of the ambient frame $\{E_1,E_2, E_3\}$ as $e_\beta=\sum_{\alpha=1}^3M_\beta^\alpha E_\alpha$ (i.e., $M^\alpha_\beta=\epsilon_\alpha \langle e_\beta, E_\alpha\rangle$), and the matrix $M$ is a change-of-basis matrix that satisfies $M^t\mathcal{E}M=\mathcal{E}$ and $\det(M)=1$, where $\mathcal{E}$ is the constant diagonal matrix with entries $\{\epsilon_1,\epsilon_2,\epsilon_3\}$ in the diagonal, since both bases are orthonormal for the semi-Riemannian metric (in the notation $M^\alpha_\beta$, the index $\alpha$ indicates the row, whereas the index $\beta$ is the column). We will denote by $\SO^\epsilon_3(\mathbb{R})$ the Lie group of such matrices and by $\mathfrak{so}_3^\epsilon(\mathbb{R})$ its Lie algebra, which consists of matrices $A$ such that $\mathcal{E}A\mathcal{E}=-A^t$, or equivalently $\epsilon_\alpha A^\alpha_\beta + \epsilon_\beta A^\beta_\alpha = 0$ for all $\alpha,\beta\in\{1,2,3\}$. Hence, our assumption on the signs $\hat\epsilon_i=\epsilon_i$ allows us to work directly in this setting of Lie groups where the theory of moving frames applies smoothly.

The dual frame $\omega^1,\omega^2,\omega^3\in\Omega^1(\Sigma)$ is defined as the $1$-forms such that $\omega^i(e_j)=\delta_j^i$ and $\omega^3\equiv 0$ for $i,j\in\{1,2\}$. The curvature $1$-forms $\omega_j^i\in\Omega^1(\Sigma)$ are defined by
\begin{equation}\label{eqn:curvature-forms}
	\textstyle\nabla_{e_k}e_j=\sum_{i=1}^2\omega_j^i(e_k)e_i,\quad Se_k=-\sum_{i=1}^2\omega_3^i(e_k)e_i.
\end{equation}
Since the relations $\epsilon_j\omega_i^j + \epsilon_i\omega_j^i =0$ hold true for $i,j\in \{1,2\}$, this suggests to define additionally $\omega_j^3=-\epsilon_3\epsilon_j\omega^j_3$ and $\omega^3_3\equiv 0$ and gather all these $1$-forms in a matrix $\Omega=(\omega_\beta^\alpha)$ with $\mathfrak{so}_3^\epsilon$-symmetry. These forms have the following direct expressions, which are convenient in the forthcoming computations:
\begin{equation}\label{eqn:curvature-forms-as-products}
	\omega_j^i(e_k)=\epsilon_i \langle\nabla_{e_k}e_j,e_i\rangle,\quad \omega_3^j(e_k)=-\epsilon_3\epsilon_j\omega_j^3(e_k)=-\epsilon_j\langle Se_k,e_j\rangle,\quad \omega_3^3\equiv 0,
\end{equation}
for $i,j,k\in\{1,2\}$. It is worth emphasizing that $\omega^i_j$ are purely intrinsic for $i,j\in\{1,2\}$, whereas $\omega^i_3$ and $\omega^3_j$ need some extrinsic control coming from the shape operator. This helps understand the idea behind moving frames. 

The exterior derivatives of all these $1$-forms are well known (e.g., the proof in~\cite[Prop.~2.4]{Daniel09} can be easily adapted to the semi-Riemannian case):
\begin{align}
	\textstyle \df\omega^i + \sum_{k=1}^2\omega^i_k \wedge \omega^k &= 0,\label{eqn:domega1}\\
	\textstyle \sum_{k=1}^2 \omega^3_k \wedge \omega^k &= 0,\label{eqn:domega2}\\
	\textstyle \df\omega^i_j + \sum_{k=1}^2 \omega^i_k \wedge \omega^k_j &= \textstyle \frac{\epsilon_i}{2}\sum_{h,l=1}^2  \langle R(e_h, e_l)e_j,e_i\rangle \omega^h \wedge \omega^l,\label{eqn:domega3}\\
	\textstyle \df\omega^3_j + \sum_{k=1}^2 \omega^3_k \wedge \omega^k_j &= \textstyle \frac{\epsilon_3}{2}\sum_{h,l=1}^2  \langle \nabla_{e_h} Se_l - \nabla_{e_l}Se_h - S[e_h, e_l], e_j\rangle \omega^h\wedge \omega^l.\label{eqn:domega4}
\end{align}

\begin{lemma}\label{lem:lie-equation}
	Under the above assumptions,
	\begin{equation}\label{lem:lie-equation:eqn1}
	M^{-1}\df M = \Omega + L(M),
	\end{equation}
	where 
	\begin{equation}\label{lem:lie-equation:eqn2}
	L(M)^\alpha_\beta = -\epsilon_\alpha\sum_{i=1}^2\left(\sum_{\gamma, \delta, \zeta=1}^3\epsilon_\zeta M^\zeta_\alpha M^\gamma_i M^\delta_\beta \overline{\Gamma}{}^{\zeta}_{\gamma \delta}\right)\omega^i,
	\end{equation}
 and $\overline\Gamma_{ij}^k=\epsilon_k\langle\overline\nabla_{E_i}E_j,E_k\rangle$ denote the ambient Christoffel symbols.
\end{lemma}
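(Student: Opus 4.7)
My plan is to differentiate the change-of-basis identity $e_\beta=\sum_\alpha M^\alpha_\beta E_\alpha$ along an arbitrary tangent vector field $X\in\mathfrak{X}(\Sigma)$ and compute $\overline\nabla_X e_\beta$ in two different ways: one expanding $\overline\nabla_X E_\gamma$ via the ambient Christoffel symbols $\overline\Gamma^\zeta_{\gamma\delta}$, and one encoding the tangential and normal pieces simultaneously through the extended curvature $1$-forms $\omega^\alpha_\beta$. Equating both expressions and conjugating by $M^{-1}=\mathcal{E}M^t\mathcal{E}$, which holds because $M\in\SO^\epsilon_3(\R)$, will yield the desired identity.

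For the first computation, the product rule gives $\overline\nabla_X e_\beta=\sum_\gamma \df M^\gamma_\beta(X)\,E_\gamma+\sum_\gamma M^\gamma_\beta\,\overline\nabla_X E_\gamma$, and expanding $X=\sum_{i,\delta}\omega^i(X)M^\delta_i E_\delta$ together with $\overline\nabla_{E_\delta}E_\gamma=\sum_\zeta\overline\Gamma^\zeta_{\delta\gamma}E_\zeta$ turns the second term into $\sum_{i,\gamma,\delta,\zeta}\omega^i(X)M^\delta_i M^\gamma_\beta\overline\Gamma^\zeta_{\delta\gamma}E_\zeta$. For the second computation, the key observation is that with the $\mathfrak{so}^\epsilon_3$-symmetric extension provided by $\omega^3_\beta=-\epsilon_3\epsilon_\beta\omega^\beta_3$ and $\omega^3_3\equiv 0$, the tangential and normal pieces of $\overline\nabla_X e_\beta$ unify into the single formula $\overline\nabla_X e_\beta=\sum_{\alpha=1}^3\omega^\alpha_\beta(X)\,e_\alpha$, valid for every $\beta\in\{1,2,3\}$. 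Indeed, for $\beta\in\{1,2\}$ this combines $\nabla_X e_\beta=\sum_{i=1}^2\omega^i_\beta(X)e_i$ with the normal part $\sigma(X,e_\beta)=\hat\epsilon_3\langle Se_\beta,X\rangle e_3=\omega^3_\beta(X)e_3$, while for $\beta=3$ it reproduces Weingarten's equation $\overline\nabla_X N=-SX=\sum_{i=1}^2\omega^i_3(X)e_i$. Re-expanding each $e_\alpha=\sum_\zeta M^\zeta_\alpha E_\zeta$, this second computation reads $\overline\nabla_X e_\beta=\sum_{\alpha,\zeta}\omega^\alpha_\beta(X)M^\zeta_\alpha E_\zeta$.

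Equating the coefficients of $E_\zeta$ in both expressions and relabeling $\gamma\leftrightarrow\delta$ to match the ordering in~\eqref{lem:lie-equation:eqn2} yields the scalar identity $\df M^\zeta_\beta=\sum_\alpha M^\zeta_\alpha\omega^\alpha_\beta-\sum_{i,\gamma,\delta}\omega^i M^\gamma_i M^\delta_\beta\overline\Gamma^\zeta_{\gamma\delta}$. Contracting with $(M^{-1})^\alpha_\zeta=\epsilon_\alpha\epsilon_\zeta M^\zeta_\alpha$ and summing over $\zeta$, the first term collapses to $\omega^\alpha_\beta$ by virtue of the orthonormality relation $\sum_\zeta\epsilon_\zeta M^\zeta_\alpha M^\zeta_{\alpha'}=\langle e_\alpha,e_{\alpha'}\rangle=\epsilon_\alpha\delta^\alpha_{\alpha'}$, while the second term becomes exactly $L(M)^\alpha_\beta$ as given by~\eqref{lem:lie-equation:eqn2}. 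This establishes the claimed identity $M^{-1}\df M=\Omega+L(M)$.

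The main obstacle I anticipate is purely notational: the proliferation of upper/lower indices together with the $\epsilon$-factors from the semi-Riemannian setting must be tracked with care to avoid sign errors. Conceptually the lemma is merely the infinitesimal statement that $M$ intertwines the two orthonormal frames under $\overline\nabla$; the only substantive ingredient is the systematic use of the $\mathfrak{so}^\epsilon_3$-extension of $\Omega$, which is precisely what allows the tangential and normal derivatives of $e_\beta$ to be written with a common formula and thereby reduces the identity to a straightforward computation.
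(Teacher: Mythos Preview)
Your proof is correct and follows essentially the same approach as the paper's: both compute $\overline\nabla_{e_k}e_\beta$ in two ways (via the ambient Christoffel symbols and via the unified formula $\overline\nabla_X e_\beta=\sum_\alpha\omega^\alpha_\beta(X)e_\alpha$), match coefficients in the $E_\zeta$-frame, and then contract with $(M^{-1})^\alpha_\zeta=\epsilon_\alpha\epsilon_\zeta M^\zeta_\alpha$. Your exposition is slightly more explicit in justifying the unified formula for $\overline\nabla_X e_\beta$ by checking the cases $\beta\in\{1,2\}$ and $\beta=3$ separately, but the argument is otherwise identical.
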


\begin{proof}
	Let $k\in\{1,2\}$ and $\beta\in\{1,2,3\}$. Since $e_\beta = \sum_{\delta=1}^3 M^\delta_\beta E_\delta$, we deduce that
	\[\overline{\nabla}_{e_k}e_\beta=\sum_{\delta=1}^3\left(\df M^\delta_\beta(e_k)E_\delta + M^\delta_\beta \overline{\nabla}_{e_k}E_\delta\right) = 
	\sum_{\zeta=1}^3 \df M^\zeta_\beta(e_k)E_\zeta + \!\!\!\!\sum_{\zeta,\gamma,\delta=1}^3\!\! M^\delta_\beta M^\gamma_{k} \overline{\Gamma}{}^\zeta_{\gamma \delta}E_\zeta.\]
	We can also express $\overline{\nabla}_{e_k}e_\beta = \sum_{\delta=1}^3 \omega^\delta_\beta(e_k)e_\delta$ by means of the curvature forms~\eqref{eqn:curvature-forms}, whence a change of basis yields $\overline{\nabla}_{e_k}e_\beta = \sum_{\zeta, \delta=1}^3 \omega^\delta_\beta(e_k) M^\zeta_\delta E_\zeta$. By matching coefficients in the above two expressions, we reach
	\begin{equation}\label{lem:lie-equation:eqn3}
	\df M^\zeta_\beta(e_k) = \sum_{\delta=1}^3 M^\zeta_\delta\omega^\delta_\beta(e_k)- \sum_{i=1}
	^2\sum_{\gamma,\delta=1}^3 M^\delta_\beta M^\gamma_i \overline{\Gamma}{}^\zeta_{\gamma \delta}\omega^i(e_k),
	\end{equation}
	for all $\zeta\in\{1,2,3\}$. The condition $M^{-1}=\mathcal{E}M^t\mathcal{E}$ tells us that the element in row $\alpha$ and column $\zeta$ of $M^{-1}$ equals $\epsilon_\alpha\epsilon_\zeta M^\zeta_\alpha$, so that multiplying both sides of~\eqref{lem:lie-equation:eqn3} by $\epsilon_\alpha\epsilon_\zeta M^\zeta_\alpha$ and taking the sum over $\zeta$ yields the desired identity~\eqref{lem:lie-equation:eqn1}.
\end{proof}

Notice that $L(M)$ has $\mathfrak{so}^\epsilon_3$-symmetry and there are only six non-vanishing Christoffel symbols in view of~\eqref{eqn:nabla-unimodular}, so each $1$-form $L(M)^\alpha_\beta$ in Lemma~\ref{lem:lie-equation} has at most six terms in $\omega^1$ and six terms in $\omega^2$. More precisely, we can expand~\eqref{lem:lie-equation:eqn2} as
\begin{align*}
L(M)^\alpha_\beta(e_k)&=\epsilon_\alpha\Bigl(\mu_1M^1_k(M^2_\alpha M^3_\beta-M^3_\alpha M^2_\beta)+\mu_2M^2_k(M^3_\alpha M^1_\beta-M^1_\alpha M^3_\beta)\\
&\qquad\qquad+\mu_3M^3_k(M^1_\alpha M^2_\beta-M^2_\alpha M^1_\beta)\Bigr).
\end{align*}
The terms of the form $M^\gamma_\alpha M^\delta_\beta-M^\delta_\beta M^\gamma_\alpha$ show up in the adjoint matrix $\adj(M^t)$, which can be retrieved as $M^{-1}=\mathcal{E}M^t\mathcal{E}$. This allows us to rewrite the matrix of $1$-forms $L(M)$ in a much simpler way:
\begin{equation}\label{eqn:L}
	L(M) =\left(\begin{smallmatrix}
		0 & \epsilon_1\epsilon_3\sum_{\gamma=1}^3\epsilon_\gamma\mu_\gamma M^\gamma_3 \eta^\gamma & -\epsilon_1\epsilon_2\sum_{\gamma=1}^3\epsilon_\gamma\mu_\gamma M^\gamma_2 \eta^\gamma\\
		-\epsilon_2\epsilon_3\sum_{\gamma=1}^3\epsilon_\gamma\mu_\gamma M^\gamma_3 \eta^\gamma & 0 & \epsilon_1\epsilon_2\sum_{\gamma=1}^3\epsilon_\gamma\mu_\gamma M^\gamma_1 \eta^\gamma\\
		\epsilon_2\epsilon_3\sum_{\gamma=1}^3\epsilon_\gamma\mu_\gamma M^\gamma_2 \eta^\gamma & -\epsilon_1\epsilon_3\sum_{\gamma=1}^3\epsilon_\gamma\mu_\gamma  M^\gamma_1 \eta^\gamma & 0
	\end{smallmatrix}\right),
\end{equation}
where we have considered the $1$-forms $\eta^\gamma = M^\gamma_1 \omega^1 + M^\gamma_2 \omega^2$ for each $\gamma\in\{1,2,3\}$, which are just the entries of the vector of $1$-forms $M\omega$. Consequently, we can fully expand the first and second columns of the identity $\df M=M\Omega+ML(M)$ and check that they equivalent to the following six equations:
\begin{equation}\label{eqn:dM}
	\begin{aligned}
	\df M^1_1&=M^1_2\omega^2_1+M^1_3\omega^3_1+\epsilon_1(\mu_3-\mu_2)M^2_1M^3_1\omega^1+\epsilon_1(\mu_3M^2_1M^3_2-\mu_2M^3_1M^2_2)\omega^2,\\
	\df M^1_2&=M^1_1\omega^1_2+M^1_3\omega^3_2+\epsilon_1(\mu_3M^3_1M^2_2-\mu_2M^2_1M^3_2)\omega^1+\epsilon_1(\mu_3-\mu_2)M^3_2M^2_2\omega^2,\\
	\df M^2_1&=M^2_2\omega^2_1+M^1_3\omega^3_1+\epsilon_2(\mu_1-\mu_3)M^3_1M^1_1\omega^1+\epsilon_2(\mu_1M^3_1M^1_2-\mu_3M^1_1M^3_2)\omega^2,\\
	\df M^2_2&=M^2_1\omega^1_2+M^2_3\omega^3_2+\epsilon_2(\mu_1M^1_1M^3_2-\mu_3M^3_1M^1_2)\omega^1+\epsilon_2(\mu_1-\mu_3)M^1_2M^3_2\omega^2,\\
	\df M^3_1&=M^3_2\omega^2_1+M^3_3\omega^3_1+\epsilon_3(\mu_2-\mu_1)M^2_1M^1_1\omega^1+\epsilon_3(\mu_2M^1_1M^2_2-\mu_1M^2_1M^1_2)\omega^2,\\
	\df M^3_2&=M^3_1\omega^1_2+M^3_3\omega^3_2+\epsilon_3(\mu_2M^2_1M^1_2-\mu_1M^1_1M^2_2)\omega^1+\epsilon_3(\mu_2-\mu_1)M^1_2M^2_2\omega^2.
\end{aligned}
\end{equation}
We will also need the wedge products $\eta^\alpha\wedge\eta^\beta$, which can be computed as
\begin{equation}\label{eqn:eta-wedge}
\begin{aligned}
	\eta^1 \wedge\eta^2 &=(M^1_1 M^2_2 - M^1_2 M^2_1)\;\omega^1\wedge \omega^2 = M^3_3\;\omega^1 \wedge \omega^2,\\
	\eta^2 \wedge \eta^3 &= (M^2_1 M^3_2 - M^2_2 M^3_1)\;\omega^1\wedge \omega^2 = \epsilon_1\epsilon_3 M^1_3\;\omega^1 \wedge \omega^2,\\
	\eta^3 \wedge \eta^1 &= (M^1_2 M^3_1 - M^1_1 M^3_2 )\;\omega^1 \wedge \omega^2 = \epsilon_2\epsilon_3 M^2_3\;\omega^1 \wedge \omega^2.
\end{aligned}\end{equation}

Lemma~\ref{lem:lie-equation} provides a necessary equation for the change-of-frame matrix of an isometric immersion $\phi:\Sigma\to G$. Note that $M$ is not unique since it depends upon the choice of the positive orthonormal frame $\{e_1,e_2,e_3\}$. However, we will now prove the converse statement that, if there is a matrix valued smooth map $M:\Sigma\to\mathrm{SO}_3^\epsilon(\R)$ satisfying $M^{-1}\df M=\Omega+L(M)$, then there is a unique immersion up to left-translations. The aforesaid non-uniqueness of $M$ can be consequently explained by the elements of the stabilizers.

Consider the generalized Cartan model $G\cong D\times\R$ in~\eqref{eqn:CartanMetric} and write the immersion in coordinates $\phi=(\phi_1,\phi_2,\phi_3):\Sigma\to D\times\R\subset\R^3$. Notice that the following equations must hold true for all $\alpha\in\{1,2,3\}$:
\begin{equation}\label{eqn:second-equation}
	\begin{aligned}
		\df\phi_\alpha=\sum_{k=1}^2\df\phi_\alpha(e_k)\omega^k&=\sum_{k=1}^2\sum_{\beta=1}^3M^\beta_k\pi_\alpha(E_\beta)\omega^k\\
		&=\sum_{k=1}^2\sum_{\beta,\gamma=1}^3B^\beta_\gamma M^\beta_k\delta^\gamma_\alpha\omega^k=\sum_{k=1}^2\sum_{\beta=1}^3B^\beta_\alpha M^\beta_k\omega^k,
\end{aligned}\end{equation}
where $\pi_\alpha$ is the $\alpha$-th coordinate in the basis $\{\partial_x,\partial_y,\partial_z\}$ and $B$ is the change-of-frame matrix from $\{E_1,E_2,E_3\}$ to $\{\partial_x,\partial_y,\partial_z\}$. From~\eqref{eqn:G-frame}, we get that
\begin{equation}\label{eqn:B}
B(x,y,z)=\begin{pmatrix}
	\frac{c(z)}{\lambda(x,y)} & \frac{-c_1s(z)}{\lambda(x,y)} & 0\\
	\frac{c_2s(x)}{\lambda(x,y)} & \frac{c(z)}{\lambda(x,y)} & 0\\
\frac{c_3}{2}\left(c_2x\,s(z)-yc(z)\right) & \frac{c_3}{2}\left(x\,c(z)+c_1y\,s(z)\right) & 1
\end{pmatrix}.
\end{equation}
so that~\eqref{eqn:second-equation} can be written as a product of matrices $\df\phi=(B\circ\phi)M\omega$, where $\omega=(\omega^1,\omega^2,\omega^3)^t$. This equation depends on our choice of model and encodes the relation between the matrix $M$ and the coordinates of the immersion into $D\times\R$.

\begin{proposition}\label{prop:second-integration}
Let $\Sigma$ be a simply connected semi-Riemannian surface and assume that $M:\Sigma\to\mathrm{SO}_3^\epsilon(\R)$ is a smooth matrix-valued function satisfying $M^{-1}\df M=\Omega+L(M)$ in $\Sigma$, where the entries $\omega^3_1$ and $\omega^3_2$ of $\Omega\in\mathfrak{so}_3^\epsilon(\Omega^1(\Sigma))$ are any $1$-forms satisfying $\omega^3_1\wedge\omega^1+\omega^3_2\wedge\omega^2=0$. Given $p_0\in\Sigma$ and $q_0\in G$, there is a unique smooth immersion $\phi:\Sigma\rightarrow G$ such that $\phi(p_0)=q_0$ and $\df\phi = (B \circ \phi)M\omega$ in $\Sigma$.
\end{proposition}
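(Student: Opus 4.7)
The plan is to apply Frobenius theorem on the $5$-dimensional product $\Sigma\times G$ to the Pfaffian system
\[\Theta^\alpha = \df x_\alpha - \sum_{\beta=1}^3\sum_{k=1}^2 B^\beta_\alpha(x_1,x_2,x_3)\,M^\beta_k(p)\,\omega^k(p),\qquad \alpha\in\{1,2,3\},\]
where $(x_1,x_2,x_3)=(x,y,z)$ are the Cartan coordinates on $G\cong D\times\R$ from Section~\ref{sec:lie-groups}. These three $1$-forms cut out a rank-$2$ distribution whose integral leaf through $(p_0,q_0)$, once integrability is verified, will be the graph of the candidate immersion $\phi$.

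The core step is to verify $\df\Theta^\alpha\equiv 0\pmod{\Theta^1,\Theta^2,\Theta^3}$. Equivalently, introducing the $\mathfrak{g}$-valued $1$-form $\eta=M\omega$ on $\Sigma$ with components $\eta^\beta=M^\beta_1\omega^1+M^\beta_2\omega^2$, the condition reduces to the Maurer-Cartan equation of $G$ pulled back by $\phi$, namely
\[\df\eta^\alpha+c_\alpha\,\eta^\beta\wedge\eta^\gamma=0,\qquad (\alpha,\beta,\gamma)\text{ cyclic in }\{1,2,3\},\]
which encodes the Lie brackets in~\eqref{eqn:lie-bracket-unimodular-frame}. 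Expanding $\df\eta^\alpha=\sum_k\df M^\alpha_k\wedge\omega^k+\sum_k M^\alpha_k\,\df\omega^k$ and substituting the hypothesis $\df M=M(\Omega+L(M))$ together with the surface structure equation~\eqref{eqn:domega1}, the intrinsic connection $1$-forms $\omega^\beta_k$ with $\beta\in\{1,2\}$ cancel telescopically, leaving
\[\df\eta^\alpha=\sum_{k=1}^2 M^\alpha_3\,\omega^3_k\wedge\omega^k+\sum_{k=1}^2\sum_{\beta=1}^3 M^\alpha_\beta\,L(M)^\beta_k\wedge\omega^k.\]
The first summand vanishes by the standing hypothesis $\omega^3_1\wedge\omega^1+\omega^3_2\wedge\omega^2=0$. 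For the remaining term, the explicit expression of $L(M)$ in~\eqref{eqn:L}, the wedge identities~\eqref{eqn:eta-wedge}, the row-orthogonality $\sum_k\epsilon_k M^i_k M^j_k=\epsilon_i\delta^j_i$ coming from $M\in\SO_3^\epsilon(\R)$, and the algebraic identities $\epsilon_\alpha c_\alpha=\mu_\beta+\mu_\gamma$ derived from the definition of the $\mu_i$ combine in a term-by-term computation to produce exactly $-c_\alpha\eta^\beta\wedge\eta^\gamma$.

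Once integrability is established, Frobenius yields a unique maximal integral leaf through $(p_0,q_0)$. Because the distribution is transverse to the $G$-fibers of $\Sigma\times G\to\Sigma$ and $\Sigma$ is simply connected, this leaf projects diffeomorphically onto all of $\Sigma$, hence is the graph of a unique smooth map $\phi:\Sigma\to G$ with $\phi(p_0)=q_0$ satisfying $\df\phi=(B\circ\phi)M\omega$. The map $\phi$ is an immersion: from $\df\phi_p(v)=\sum_{\beta,k}M^\beta_k(p)\omega^k(v)\,E_\beta|_{\phi(p)}$, the fact that $M(p)\in\SO_3^\epsilon(\R)$ has linearly independent columns (so its first two columns span a $2$-dimensional subspace of $\mathbb{R}^3$) combined with $\{\omega^1,\omega^2\}$ being a coframe force $\df\phi_p$ to have rank $2$. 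The main obstacle is the algebraic matching of $\sum_\beta M^\alpha_\beta L(M)^\beta_k\wedge\omega^k$ against $-c_\alpha\eta^\beta\wedge\eta^\gamma$; this is where the $\SO_3^\epsilon$-orthogonality of $M$ and the cyclic relations among $c_i$ and $\mu_i$ enter crucially, and it is precisely designed into the definition of $L(M)$ in Section~\ref{sec:change-of-frame}.
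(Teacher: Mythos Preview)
Your proof is correct and follows essentially the same route as the paper: both set up the Pfaffian system on $\Sigma\times G$ encoding $\df\phi=(B\circ\phi)M\omega$, both reduce integrability to the identity $\df\eta^\alpha=-c_\alpha\,\eta^\beta\wedge\eta^\gamma$ (this is exactly the paper's Equation~\eqref{lem:second-integral:eqn4}, obtained from the same cancellation you describe), and both conclude via Frobenius and simple connectedness. The only cosmetic difference is that you invoke the Maurer--Cartan structure of $B^{-1}\df\chi$ to read off integrability directly, whereas the paper verifies $\df\Lambda\equiv 0\pmod{\Lambda}$ by an explicit coordinate check in the Cartan model; the substance is identical.
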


\begin{proof}
We will proceed as in the proof of~\cite[Prop.~4.11]{Daniel07} by showing the integrability of the kernel of $\Lambda=B^{-1}\df\chi-M\omega$, where $\chi=(x,y,z)^t$ is the position vector. Note that $\Lambda$ is an array containing three $1$-forms whose kernel is a $3$-dimensional distribution in the $5$-dimensional manifold $\Sigma\times D\times\R$. However, our argument to prove integrability will be different from~\cite{Daniel07}. 

Since $c(z)^2+c_1c_2s(z)^2=1$, we can invert the matrix $B$ in~\eqref{eqn:B} to get
\begin{equation}\label{lem:second-integral:eqn1}
B^{-1}=\begin{pmatrix}
	\lambda(x,y)c(z) & c_1 \lambda(x,y)s(z) & 0 \\
  -c_2\lambda(x,y) s(z) &\lambda(x,y) c(z) & 0 \\
 \frac{c_3}{2}y\lambda(x,y) & -\frac{c_3}{2}  x\lambda(x,y)  & 1
\end{pmatrix}.
\end{equation}
Since $M\omega=\eta$, we can use~\eqref{lem:second-integral:eqn1} to rewrite the vector of $1$-forms $\Lambda$ as
\begin{equation}\label{lem:second-integral:eqn2}
\Lambda=B^{-1}\df\chi-\eta=\begin{pmatrix}
	\lambda(x,y)c(z)\df x+ c_1 \lambda(x,y)s(z)\df y-\eta^1\\
	-c_2\lambda(x,y) s(z)\df x +\lambda(x,y) c(z)\df y-\eta^2\\
	\frac{c_3}{2}y\lambda(x,y)\df x-\frac{c_3}{2}  x\lambda(x,y)\df y+\df z-\eta^3
\end{pmatrix}
\end{equation}
In order to compute $\df\Lambda$, we will employ Equation~\eqref{lem:lie-equation:eqn1}, which is assumed by hypothesis. We will begin by working out $\df\eta^\gamma$ as
\begin{equation}\label{lem:second-integral:eqn3}
\df \eta^\gamma=\df(M^\gamma_1\omega^1+M^\gamma_2\omega^2)=\df M^\gamma_1\wedge\omega^1+M^\gamma_1\df\omega^1+\df M^\gamma_2\wedge\omega^2+M^\gamma_2\df\omega^2.
\end{equation}
We can further compute $\df M^\gamma_1$ and $\df M^\gamma_2$ by means of~\eqref{eqn:dM}, as well as $\df\omega^1=-\omega^1_2\wedge\omega^2$ and $\df\omega^2=-\omega^2_1\wedge\omega^1$ in view of the intrinsic equation~\eqref{eqn:domega1}. Most terms in~\eqref{lem:second-integral:eqn3} simplify after these substitutions, and we must also use the assumption that $\omega^3_1\wedge\omega^1+\omega^3_2\wedge\omega^2=0$. We finally use~\eqref{eqn:eta-wedge} to get the differentials
\begin{equation}\label{lem:second-integral:eqn4}
\begin{aligned}
	\df\eta^1&=-\epsilon_1\epsilon_3c_1M^1_3\omega^1\wedge \omega^2=-c_1\eta^2\wedge\eta^3,\\
	\df\eta^2&=-\epsilon_2\epsilon_3c_2M^2_3\omega^1\wedge \omega^2=-c_2\eta^3\wedge\eta^1,\\
	\df\eta^3&=-c_3M^3_3\omega^1\wedge \omega^2=-c_3\eta^1\wedge\eta^2.\\
\end{aligned}
\end{equation}
Now the strategy is as follows: we will differentiate $\Lambda$ in~\eqref{lem:second-integral:eqn2} by considering the differentials in~\eqref{lem:second-integral:eqn4}, so that everything will be expressed in terms of $\df x\wedge\df y$, $\df y\wedge\df z$, $\df z\wedge\df x$ and $\eta^i\wedge\eta^j$. Then, we will check that $\df\Lambda(\xi_1,\xi_2)=0$ at elements $\xi_1,\xi_2$ of a tangent space of $\Sigma\times D\times\R$ such that $\Lambda(\xi_1)=\Lambda(\xi_2)=0$, but this amounts to substituting (in the expression for $\df\Lambda$) the values of $\eta^1,\eta^2,\eta^3$ in terms of $\df x,\df y,\df z$ that arise when all entries of~\eqref{lem:second-integral:eqn2} are equal to zero. We will do this for the first entry of $\df\Lambda$ because the other two are similar.
\begin{align*}
\df\Lambda^1&=\bigl(\tfrac{-c_2c_3}{2}x\lambda^2c(z)\df x-\tfrac{c_1c_3}{2}y\lambda^2c(z)\df y-c_1c_2\lambda s(z)\df z\bigr)\wedge\df x\\
&\qquad+c_1\bigl(\tfrac{-c_2c_3}{2}x\lambda^2s(z)\df x-\tfrac{c_1c_3}{2}y\lambda^2s(z)\df y+\lambda c(z)\df z\bigr)\wedge\df y-\df\eta^1\\
&=\tfrac{c_1c_3}{2}\lambda^2(yc(z)-c_2xs(z))\df x\wedge\df y\\&
\qquad-c_1\lambda c(z)\df y\wedge\df z-c_1c_2\lambda s(z)\df z\wedge\df x+c_1\eta^2\wedge\eta^3.
\end{align*}
This vanishes at elements with $\Lambda=0$, since in that case we get from~\eqref{lem:second-integral:eqn2} that
\begin{align*}
\eta^2\wedge\eta^3&=(-c_2\lambda s(z)\df x +\lambda c(z)\df y)\wedge(\tfrac{c_3}{2}y\lambda\df x-\tfrac{c_3}{2}  x\lambda\df y+\df z)\\
&=\tfrac{-c_3}{2}\lambda^2(yc(z)-c_2xs(z))\df x\wedge\df y+\lambda c(z)\df y\wedge\df z+c_2\lambda s(z)\df z\wedge\df x.
\end{align*}
The existence of a local immersion around $p_0\in\Sigma$ follows as in the last paragraph of~\cite[Prop.~4.11]{Daniel07}. The global existence of the immersion follows from the simple connectedness of $\Sigma$ by a standard continuation argument (see~\cite[Thm.~3.3]{Daniel09}).
\end{proof}

\begin{remark}\label{rmk:two-columns}
The above proof shows that if we just check that the first two columns of $\df M$ and $M\Omega+ML(M)$ agree, then the result still holds. This is not surprising because the condition $M\in\mathrm{SO}_3^\epsilon(\R)$ implies that two columns (or rows) of $M$ determine the third one since they form a positive orthonormal frame with respect to the metric given by $\mathcal{E}$. This observation will simplify some arguments later.
\end{remark}

\subsection{Recovering the immersion from the tangent projections}\label{sec:tangent}

We have seen in the previous section necessary and sufficient conditions on the (derivatives of the) change-of-frame matrix $M$ for the existence of an immersion into the semi-Riemannian metric Lie group $G$. This raises the natural question of finding geometric conditions that ensure the existence of such an $M$ satisfying~\eqref{lem:lie-equation:eqn1} and how to define $\Omega$ so that~\eqref{eqn:domega2} also holds true.

We will begin by the simple observation that the tangent vector fields $T_1,T_2,T_3$ and the angle functions $\nu_1,\nu_2,\nu_3$ determine uniquely the matrix $M$ \emph{a posteriori}. By this, we mean that, if the immersion exists, we can recover $M$ as
\begin{equation}\label{eqn:M}
M=\begin{pmatrix}
\epsilon_1\langle T_1,e_1\rangle&\epsilon_1\langle T_1,e_2\rangle&\epsilon_1\nu_1\\
\epsilon_2\langle T_2,e_1\rangle&\epsilon_2\langle T_2,e_2\rangle&\epsilon_2\nu_2\\
\epsilon_3\langle T_3,e_1\rangle&\epsilon_3\langle T_3,e_2\rangle&\epsilon_3\nu_3
\end{pmatrix}.
\end{equation}
The last column is a bit irrelevant in the sense that it can recovered uniquely as the semi-Riemannian cross product of the first two columns:
\begin{equation}\label{eqn:nui-from-Ti}
\begin{aligned}
	\epsilon_1\nu_1&=\epsilon_1\epsilon_2(\langle T_2,e_1\rangle\langle T_3,e_2\rangle-\langle T_2,e_2\rangle\langle T_3,e_1\rangle)=\langle JT_2,T_3\rangle,\\
\epsilon_2\nu_2&=\epsilon_1\epsilon_2(\langle T_3,e_1\rangle\langle T_1,e_2\rangle-\langle T_3,e_2\rangle\langle T_1,e_1\rangle)=\langle JT_3,T_1\rangle,\\
\epsilon_3\nu_3&=\epsilon_1\epsilon_2(\langle T_1,e_1\rangle\langle T_2,e_2\rangle-\langle T_1,e_2\rangle\langle T_2,e_1\rangle)=\langle JT_1,T_2\rangle.
\end{aligned}
\end{equation}
Next result shows that the shape operator is also uniquely determined and the compatibility equations~\ref{eqn:comp-iv} are enough to recover the immersion. 

\begin{theorem}\label{thm:fundamental}
Let $G$ be the semi-Riemannian unimodular metric Lie group of diagonalizable type with structure constants $c_1,c_2,c_3$ and signs $\epsilon_1,\epsilon_2,\epsilon_3$.	Let $\Sigma$ be a simply connected semi-Riemannian surface with a positive orthonormal frame $\{e_1,e_2\}$ with signature $\epsilon_1,\epsilon_2$, and assume that $\Sigma$ has three tangent vectors fields $T_1,T_2,T_3$ such that $\sum_{\alpha=1}^3\epsilon_\alpha\langle T_\alpha,e_i\rangle\langle T_\alpha,e_j\rangle=\epsilon_i\delta^j_i$ for all $i,j\in\{1,2\}$. Consider the smooth functions $\nu_1,\nu_2,\nu_3$ given by~\eqref{eqn:nui-from-Ti}. If the compatibility equations~\ref{eqn:comp-iv} hold true for some field of self-adjoint operators $S$, then there is a unique isometric immersion $\phi:\Sigma\to G$ for which $T_1,T_2,T_3$ are the tangent components of the left-invariant frame in $G$. 

	Moreover, the only possible candidate for $S$ in the above conditions is
	\begin{equation}\label{thm:fundamental:eqn1}
	SX=\sum_{\alpha=1}^3\epsilon_\alpha\bigl(\mu_\alpha\langle T_\alpha,X\rangle JT_\alpha-\langle\nabla\nu_\alpha,X\rangle T_\alpha\bigr),
	\end{equation}
	which is self-adjoint if and only if $\sum_{\alpha=1}^3\left(\epsilon_\alpha\langle\nabla\nu_\alpha,JT_\alpha\rangle+\epsilon_1\epsilon_2\epsilon_3c_\alpha\nu_\alpha^2\right)=0$.
\end{theorem}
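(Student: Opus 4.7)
The plan is to proceed in three stages: first derive the candidate formula for $S$ from compatibility~\ref{eqn:comp-iv} alone, then translate its self-adjointness into the stated scalar identity, and finally use the change-of-frame machinery of Section~\ref{sec:change-of-frame} together with Proposition~\ref{prop:second-integration} to produce the immersion. To isolate $SX$, I would contract the three equations~\ref{eqn:comp-iv} against $\epsilon_\alpha\nu_\alpha$ and sum: the algebraic relation $\sum_\alpha\epsilon_\alpha\nu_\alpha^2=\hat\epsilon_3$ collapses the left-hand side to $SX$, while the remaining quadratic pieces in the $T_\alpha$ regroup through~\eqref{eqn:JEi} into the $\mu_\alpha\langle T_\alpha,X\rangle JT_\alpha$ summand of~\eqref{thm:fundamental:eqn1}. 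The $\nabla\nu_\alpha$ summand then enters by expressing $\nu_\alpha=\langle JT_\beta,T_\gamma\rangle$ for cyclic $(\alpha,\beta,\gamma)$ via~\eqref{eqn:nui-from-Ti}, differentiating this relation and substituting~\ref{eqn:comp-iv} once more; matching terms recovers the full formula and, in particular, forces uniqueness of $S$.

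For the self-adjointness condition I would compute the antisymmetric part $\langle SX,Y\rangle-\langle SY,X\rangle$ directly from~\eqref{thm:fundamental:eqn1}. In dimension two, any expression of the form $\langle u,X\rangle\langle v,Y\rangle-\langle u,Y\rangle\langle v,X\rangle$ is proportional to the area form with coefficient a multiple of $\langle u,Jv\rangle$, so both the $\mu_\alpha$-part and the $\nabla\nu_\alpha$-part of the antisymmetrization collapse to scalar multiples of the area form. Using the relations~\ref{eqn:comp-iii} and the identity $\mu_\beta+\mu_\gamma=\epsilon_\alpha c_\alpha$ to simplify the traces $\mu_\alpha\langle T_\alpha,JT_\alpha\rangle$, the first contribution reduces to a multiple of $\sum_\alpha\epsilon_1\epsilon_2\epsilon_3\,c_\alpha\nu_\alpha^2$ and the second to a multiple of $\sum_\alpha\epsilon_\alpha\langle\nabla\nu_\alpha,JT_\alpha\rangle$, which together vanish precisely under the stated condition.

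For existence I would define $M:\Sigma\to\mathrm{SO}_3^\epsilon(\R)$ via~\eqref{eqn:M}. The hypothesis guarantees orthonormality of its first two columns, and the third column coincides by construction with their semi-Riemannian cross product, so $M\in\mathrm{SO}_3^\epsilon(\R)$. I would then assemble $\Omega$ from the intrinsic Levi-Civita forms $\omega^i_j$ ($i,j\in\{1,2\}$) together with $\omega^3_j(e_k)=-\epsilon_j\langle Se_k,e_j\rangle$; self-adjointness of $S$ is exactly the closedness condition $\omega^3_1\wedge\omega^1+\omega^3_2\wedge\omega^2=0$ needed to apply Proposition~\ref{prop:second-integration}.

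The hard part will be verifying the Lie equation $M^{-1}\df M=\Omega+L(M)$. By Remark~\ref{rmk:two-columns} this reduces to the four scalar identities in~\eqref{eqn:dM} with $\alpha,k\in\{1,2\}$. Since $M^\alpha_k=\epsilon_\alpha\langle T_\alpha,e_k\rangle$, each $\df M^\alpha_k(e_l)$ expands as $\langle\nabla_{e_l}T_\alpha,e_k\rangle+\langle T_\alpha,\nabla_{e_l}e_k\rangle$: the second summand reproduces the intrinsic $M\Omega$-contribution through the forms $\omega^i_j$, while substituting~\ref{eqn:comp-iv} into the first summand splits it into a shape-operator piece (matching the $\omega^3$-columns of $\Omega$) and a purely quadratic piece that should match $L(M)$ term by term via~\eqref{eqn:L} and the algebraic relations. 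Once~\eqref{eqn:dM} is verified, Proposition~\ref{prop:second-integration} delivers a unique immersion $\phi$ for each choice $\phi(p_0)=q_0$, hence unique up to left-translations, and by construction the $T_\alpha$ are the tangent projections of the left-invariant frame $\{E_1,E_2,E_3\}$.
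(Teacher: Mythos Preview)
Your proposal is correct and follows essentially the same approach as the paper: build $M$ from the $T_\alpha$ via~\eqref{eqn:M}, verify the Lie equation $M^{-1}\df M=\Omega+L(M)$ column by column using~\ref{eqn:comp-iv}, and invoke Proposition~\ref{prop:second-integration}; then extract the formula for $S$ by contracting~\ref{eqn:comp-iv} against $\epsilon_\alpha\nu_\alpha$ and summing. Two small corrections: Remark~\ref{rmk:two-columns} reduces the check to the first two \emph{columns}, i.e., the six identities $\df M^\alpha_k$ with $\alpha\in\{1,2,3\}$ and $k\in\{1,2\}$, not four; and for the $\nabla\nu_\alpha$ summand in~\eqref{thm:fundamental:eqn1} the paper takes the quicker route of differentiating the algebraic relation $\sum_\alpha\epsilon_\alpha\nu_\alpha T_\alpha=0$ (which follows from $M\in\mathrm{SO}_3^\epsilon(\R)$) rather than differentiating the cross-product expressions~\eqref{eqn:nui-from-Ti}.
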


\begin{proof}
Define $M^\alpha_j=\epsilon_\alpha\langle T_\alpha,e_\beta\rangle$ and $M^\alpha_3=\epsilon_\alpha\nu_\alpha$ for $\alpha\in\{1,2,3\}$ and $k\in\{1,2\}$. The condition $\sum_{\alpha=1}^3\epsilon_\alpha\langle T_\alpha,e_i\rangle\langle T_\alpha,e_j\rangle=\epsilon_i\delta^j_i$ means that the first two columns of $M$ are orthogonal with norms $\epsilon_1$ and $\epsilon_2$, respectively, which guarantees that the third column has norm $\epsilon_3$ and $M\in\SO^\epsilon_3(\R)$. This is equivalent to satisfying the algebraic conditions~\ref{eqn:comp-iii}. We also define $\omega^3_j(e_k)=\epsilon_3\langle Se_k,e_j\rangle$ so that $\omega^3_1\wedge\omega^1+\omega^3_2\wedge\omega^2=0$ is clearly equivalent to the fact that $S$ is self-adjoint.

In view of Remark~\ref{rmk:two-columns}, we just need to check that the first two columns of $\df M$ and $M\Omega+ML(M)$ agree, which is equivalent to check that the six identities in~\eqref{eqn:dM} are satisfied. We will show how to deal with the first one since the others are very similar. Since we are assuming~\ref{eqn:comp-iv}, we can expand the differential of $M^1_1$ as
\begin{align*}
\df M^1_1(e_k)&=\epsilon_1 e_k(\langle T_1,e_1\rangle)=\epsilon_1\langle\nabla_{e_k}T_1,e_1\rangle+\epsilon_1\langle T_1,\nabla_{e_k}e_1\rangle\\
&=\epsilon_1\epsilon_3\nu_1\langle Se_1,e_k\rangle+\epsilon_1\epsilon_2\epsilon_3(\mu_3\langle T_2,e_1\rangle\langle T_3,e_k\rangle-\mu_2\langle T_3,e_1\rangle\langle T_2,e_k\rangle)\\
&\qquad+\epsilon_1\epsilon_2\langle T_1,e_2\rangle\langle\nabla_{e_k}e_1,e_2\rangle\\
&=M^1_3\omega^3_1(e_k)+\epsilon_1\mu_3M^2_1M^3_k-\epsilon_1\mu_2M^3_1M^2_k+M^1_2\omega^2_1(e_k),
\end{align*}
and by noticing that $M^\alpha_k=M^\alpha_1\omega^1(e_k)+M^\alpha_2\omega^2(e_k)$, the first identity in~\eqref{eqn:dM} follows readily. Note that $M$ has been uniquely determined by $T_1,T_2,T_3$, so the uniqueness of the immersion follows from Proposition~\ref{prop:second-integration}.

Finally, notice that $S$ must coincide with the shape operator of the immersion (provided that it exists). Moreover, we can employ~\ref{eqn:comp-iv} and~\eqref{eqn:JEi} to get
\begin{equation}\label{thm:fundamental:eqn2}
 \sum_{\alpha=1}^3\epsilon_\alpha\nu_\alpha\nabla_XT_\alpha=\epsilon_3\sum_{\alpha=1}^3\epsilon_\alpha\nu_\alpha^2SX-\sum_{\alpha=1}^3\epsilon_\alpha\mu_\alpha\langle X,T_\alpha\rangle JT_\alpha.
\end{equation}
Observe that $\sum_{\alpha=1}^3\epsilon_\alpha\nu_\alpha\nabla_XT_\alpha=\sum_{\alpha=1}^3\epsilon_\alpha\langle\nabla\nu_\alpha T_\alpha\rangle$ by differentiating the algebraic relation $\sum_{\alpha=1}^3\epsilon_\alpha\nu_\alpha T_\alpha$ in the direction of $X$. Therefore, since $\sum_{\alpha=1}^3\epsilon_\alpha\nu_\alpha^2=\epsilon_3$, we infer~\eqref{thm:fundamental:eqn1} from~\eqref{thm:fundamental:eqn2}. This operator $S$ is self-adjoint if and only if $\langle Se_1,e_2\rangle=\langle Se_2,e_1\rangle$, which can be rewritten as
\begin{equation}\label{thm:fundamental:eqn3}
	\begin{aligned}
\textstyle\sum_{\alpha=1}^3\epsilon_\alpha\mu_\alpha\bigl(\langle T_\alpha,e_1\rangle&\langle JT_\alpha,e_2\rangle-\langle T_\alpha,e_2\rangle\langle JT_\alpha,e_1\rangle\bigr)\\
&=\textstyle\sum_{\alpha=1}^3\epsilon_\alpha\bigl(\langle\nabla\nu_\alpha,e_1\rangle\langle T_\alpha,e_2\rangle-\langle\nabla\nu_\alpha,e_2\rangle\langle T_\alpha,e_1\rangle\bigr).
\end{aligned}
\end{equation}
Since $Je_1=\epsilon_2e_2$ and $Je_2=-\epsilon_1e_1$, Equation~\eqref{thm:fundamental:eqn3} can be expressed as 
\[\sum_{\alpha=1}^3\epsilon_\alpha\mu_\alpha\langle T_\alpha,T_\alpha\rangle=-\sum_{\alpha=1}^3\epsilon_1\epsilon_2\langle\nabla\nu_\alpha,JT_\alpha\rangle,\]
but we also have $\sum_{\alpha=1}^3\epsilon_\alpha\mu_\alpha\langle T_\alpha,T_\alpha\rangle=\epsilon_3\sum_{\alpha=1}^3c_\alpha\nu_\alpha^2$ by just algebraic manipulation, which yields the last equivalence in the statement.
\end{proof}

\subsection{Recovering the immersion from the angle functions}\label{sec:angles}
Our next goal is to study up to what extent the angle functions and the orientation determine the isometric immersion. Given a Riemannian or Lorentzian surface $\Sigma$, we will prescribe $\nu_1,\nu_2,\nu_3\in\mathcal{C}^\infty(\Sigma)$ and the intrinsic rotation $J$. The vector fields
\begin{equation}\label{eqn:Xi}
	\begin{aligned}
X_1 &= J\nabla\nu_1 + \epsilon_2\epsilon_3\left(\nu_3\nabla\nu_2 - \nu_2\nabla\nu_3\right),\\
X_2 &= J\nabla\nu_2 + \epsilon_3\epsilon_1\left(\nu_1\nabla\nu_3 - \nu_3\nabla\nu_1\right),\\
X_3 &= J\nabla\nu_3 + \epsilon_1\epsilon_2\left(\nu_2\nabla\nu_1 - \nu_1\nabla\nu_2\right).
\end{aligned}
\end{equation}
will be the key to characterize the tangent projections. Next lemma shows that they satisfy some rather non-evident properties by just assuming that the Gauss map takes values in the unit sphere.

\begin{lemma}\label{lem:Xi}
	If $\sum_{\alpha=1}^3\epsilon_\alpha\nu_\alpha^2=\epsilon_3$, then the following relations hold true:
	\begin{enumerate}[label=\emph{(\alph*)}]
		\item $(\epsilon_\beta-\epsilon_3\nu_\beta^2)\langle X_\alpha,X_\alpha\rangle =(\epsilon_\alpha-\epsilon_3\nu_\alpha^2)\langle X_\beta,X_\beta\rangle$ for all $\alpha,\beta\in\{1,2,3\}$; in particular, there is a unique $\psi\in\mathcal C^\infty(\Sigma)$ such that $\psi=\frac{\langle X_\alpha,X_\alpha\rangle}{\epsilon_\alpha-\epsilon_3\nu_\alpha^2}$ for all $\alpha\in\{1,2,3\}$ (unless at points where $\epsilon_\alpha-\epsilon_3\nu_\alpha^2=0$, at which $\langle X_\alpha,X_\alpha\rangle$ also vanishes);
		\item $\langle X_\alpha,X_\beta\rangle =(\epsilon_\alpha\delta^\alpha_\beta-\epsilon_3\nu_\alpha\nu_\beta)\psi$ for all $\alpha,\beta\in\{1,2,3\}$;
		\item $\sum_{\alpha=1}^3\!\epsilon_\alpha\langle X_\alpha,X_\alpha\rangle=2\psi$, $\sum_{\alpha=1}^3\!\epsilon_\alpha\langle X_\alpha,\nabla\nu_\alpha\rangle=0$, $\sum_{\alpha=1}^3\!\epsilon_\alpha\langle X_\alpha,J\nabla\nu_\alpha\rangle=\psi$.
	\end{enumerate}
\end{lemma}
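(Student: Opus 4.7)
The proof rests on two first-order consequences of the hypothesis $\sum_{\alpha=1}^3\epsilon_\alpha\nu_\alpha^2=\epsilon_3$. Differentiating in any tangent direction yields
\[\sum_{\alpha=1}^3\epsilon_\alpha\nu_\alpha\nabla\nu_\alpha=0,\qquad(\star)\]
while the fact that $\nabla\nu_1,\nabla\nu_2,\nabla\nu_3$ all lie in the two-dimensional space $T_p\Sigma$ forces the Pl\"ucker-type relation
\[\langle J\nabla\nu_2,\nabla\nu_3\rangle\nabla\nu_1+\langle J\nabla\nu_3,\nabla\nu_1\rangle\nabla\nu_2+\langle J\nabla\nu_1,\nabla\nu_2\rangle\nabla\nu_3=0, \qquad(\star\star)\]
through the identification $\det_{\{e_1,e_2\}}(u,v)=\langle Ju,v\rangle$. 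Since both $(\star)$ and $(\star\star)$ express the (generically) $1$-dimensional null space of the matrix $[\nabla\nu_1\mid\nabla\nu_2\mid\nabla\nu_3]$, there exists at every point where these three gradients span $T_p\Sigma$ a smooth scalar $\lambda$ with $\langle J\nabla\nu_\beta,\nabla\nu_\gamma\rangle=\lambda\,\epsilon_\alpha\nu_\alpha$ for each cyclic triple $(\alpha,\beta,\gamma)$; this $\lambda$ is the key auxiliary function.

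The main step is to establish (a) and (b) simultaneously by a direct expansion of $\langle X_\alpha,X_\beta\rangle$. I would use the standard identities $\langle Ju,Jv\rangle=\epsilon_1\epsilon_2\langle u,v\rangle$ and $\langle Ju,v\rangle=-\langle u,Jv\rangle$ (both inherited from $Ju=N\times u$) to rewrite the result purely in terms of the scalars $\langle\nabla\nu_\gamma,\nabla\nu_\delta\rangle$ and $\langle J\nabla\nu_\gamma,\nabla\nu_\delta\rangle$. Substituting the latter via $\lambda$ and applying $(\star)$ three times (dotted against each $\nabla\nu_\gamma$) to eliminate the diagonal norms $\langle\nabla\nu_\gamma,\nabla\nu_\gamma\rangle$ in favor of off-diagonal inner products collapses the whole expression to a scalar multiple of $\epsilon_\alpha\delta^\alpha_\beta-\epsilon_3\nu_\alpha\nu_\beta$. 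Reading off this common factor defines $\psi$ and proves both (a) and (b) at once; in particular one finds explicitly
\[\psi=\epsilon_1\epsilon_2\Bigl(\sum_\alpha\epsilon_\alpha\langle\nabla\nu_\alpha,\nabla\nu_\alpha\rangle+2\lambda\Bigr).\]
The degenerate locus where the three gradients fail to span $T_p\Sigma$ is closed, and there all $\langle J\nabla\nu_\gamma,\nabla\nu_\delta\rangle$ vanish so the same cancellations occur trivially; $\psi$ extends smoothly by the first formula in (c), which involves only $X_\alpha$ themselves.

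For (c), the first identity is the trace of (b) together with $\sum_\alpha\epsilon_\alpha(\epsilon_\alpha-\epsilon_3\nu_\alpha^2)=3-\epsilon_3^2=2$. The second follows by expanding $\sum_\alpha\epsilon_\alpha\langle X_\alpha,\nabla\nu_\alpha\rangle$: the terms containing $\langle J\nabla\nu_\alpha,\nabla\nu_\alpha\rangle$ vanish and the remaining cyclic sum $\epsilon_1\epsilon_2\epsilon_3\sum_{\text{cyc}}(\nu_\gamma\langle\nabla\nu_\beta,\nabla\nu_\alpha\rangle-\nu_\beta\langle\nabla\nu_\gamma,\nabla\nu_\alpha\rangle)$ cancels pair by pair. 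For the third, an analogous expansion produces $\epsilon_1\epsilon_2\sum_\alpha\epsilon_\alpha\langle\nabla\nu_\alpha,\nabla\nu_\alpha\rangle$ plus the contribution $2\epsilon_1\epsilon_2\epsilon_3(\nu_1 P_{23}+\nu_2 P_{31}+\nu_3 P_{12})=2\epsilon_1\epsilon_2\lambda$, which together reproduce exactly the formula for $\psi$ derived in the main step.

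The principal obstacle is the bookkeeping in the main step: the off-diagonal case contains about a dozen terms whose cancellation depends simultaneously on all three instances of $(\star)$. In practice I would carry out the archetypal case $\alpha=1$, $\beta=2$ in full detail — verifying by explicit substitution that the residual terms collect, after the three applications of $(\star)$, into the factor $(\nu_1^2+\nu_2^2+\nu_3^2-1)\cdot(\text{something})$ that vanishes by the hypothesis — and then note that the other cases follow by cyclic symmetry and that the diagonal case $\alpha=\beta$ is handled by the same substitutions and pattern.
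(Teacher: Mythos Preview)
Your approach is correct and is, at its core, the same computation the paper carries out: both proofs rest on differentiating the constraint $\sum_\alpha\epsilon_\alpha\nu_\alpha^2=\epsilon_3$ along the gradients $\nabla\nu_i$ (the paper's equations~\eqref{lem:Xi:eqn1}, your repeated uses of $(\star)$) and along the rotated gradients $J\nabla\nu_i$ (the paper's equations~\eqref{lem:Xi:eqn2}). The organizational difference is that you package the three skew relations of~\eqref{lem:Xi:eqn2} into the single scalar $\lambda$ with $\langle J\nabla\nu_\beta,\nabla\nu_\gamma\rangle=\lambda\,\epsilon_\alpha\nu_\alpha$; this is exactly equivalent to~\eqref{lem:Xi:eqn2} on the open set where the gradients span, and it buys you the explicit closed formula $\psi=\epsilon_1\epsilon_2\bigl(\sum_\alpha\epsilon_\alpha\langle\nabla\nu_\alpha,\nabla\nu_\alpha\rangle+2\lambda\bigr)$, which the paper does not write down. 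The paper instead proves item~(a) first by brute expansion (stressing that no division by any $\nu_i$ is needed, so the argument is genuinely pointwise) and only then \emph{defines} $\psi$ as the common quotient $\langle X_\gamma,X_\gamma\rangle/(\epsilon_\gamma-\epsilon_3\nu_\gamma^2)$.

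One point deserves a sentence more of care. Your $\lambda$ is defined only on the open set where the gradients span $T_p\Sigma$, and you pass to the complementary closed set by noting that all $\langle J\nabla\nu_\gamma,\nabla\nu_\delta\rangle$ vanish there. That is correct, and in fact the identity $\langle J\nabla\nu_\beta,\nabla\nu_\gamma\rangle=\lambda\,\epsilon_\alpha\nu_\alpha$ then holds with $\lambda=0$ (the vector $(\epsilon_1\nu_1,\epsilon_2\nu_2,\epsilon_3\nu_3)$ is never zero), so the very same algebraic manipulation goes through verbatim at those points; you are not actually relying on a density/continuity argument. It would strengthen the write-up to say this explicitly rather than ``the same cancellations occur trivially'', since the degenerate locus can have interior (e.g.\ surfaces with constant angles). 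With that clarification your argument is complete and slightly more informative than the paper's, at the cost of introducing the extra auxiliary $\lambda$.
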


\begin{proof}
	By differentiating the condition $\sum_{\alpha=1}^3\epsilon_\alpha\nu_\alpha^2=\epsilon_3$ in the directions of the gradients $\nabla\nu_1,\nabla\nu_2,\nabla\nu_3$, we get the following relations:
	\begin{equation}\label{lem:Xi:eqn1}
		\begin{aligned}
			\nu_1\langle\nabla\nu_1,\nabla\nu_1\rangle&=-\epsilon_1\epsilon_2\nu_2\langle\nabla\nu_1,\nabla\nu_2\rangle-\epsilon_1\epsilon_3\nu_3\langle\nabla\nu_3,\nabla\nu_1\rangle,\\
			\nu_2\langle\nabla\nu_2,\nabla\nu_2\rangle&=-\epsilon_2\epsilon_3\nu_3\langle\nabla\nu_2,\nabla\nu_3\rangle-\epsilon_2\epsilon_1\nu_1\langle\nabla\nu_1,\nabla\nu_2\rangle,\\
			\nu_3\langle\nabla\nu_3,\nabla\nu_3\rangle&=-\epsilon_3\epsilon_1\nu_1\langle\nabla\nu_3,\nabla\nu_1\rangle-\epsilon_3\epsilon_2\nu_2\langle\nabla\nu_2,\nabla\nu_3\rangle.
		\end{aligned}
	\end{equation}
	If we differentiate in the directions of $J\nabla\nu_1,J\nabla\nu_2,J\nabla\nu_3$ instead, we reach
	\begin{equation}\label{lem:Xi:eqn2}
		\begin{aligned}
			\epsilon_2\nu_2\langle J\nabla\nu_1,\nabla\nu_2\rangle+\epsilon_3\nu_3\langle J\nabla\nu_1,\nabla\nu_3\rangle&=0,\\
			\epsilon_1\nu_1\langle J\nabla\nu_2,\nabla\nu_1\rangle+\epsilon_3\nu_3\langle J\nabla\nu_2,\nabla\nu_3\rangle&=0,\\
			\epsilon_1\nu_1\langle J\nabla\nu_3,\nabla\nu_1\rangle+\epsilon_2\nu_2\langle J\nabla\nu_3,\nabla\nu_2\rangle&=0.\\
		\end{aligned}
	\end{equation}
In order to show that $(\epsilon_\beta-\epsilon_3\nu_\beta^2)\langle X_\alpha,X_\alpha\rangle =(\epsilon_\alpha-\epsilon_3\nu_\alpha^2)\langle X_\beta,X_\beta\rangle$, we can assume $\alpha\neq\beta$. Although the computations are cumbersome and will be omitted, the strategy can be described clearly: first, assume $\alpha\neq\beta$ and substitute $X_\alpha$ and $X_\beta$ by their definitions in~\eqref{eqn:Xi}; second, expand out both sides to get combinations of three types of inner products, namely $\langle J\nabla\nu_i,\nabla\nu_j\rangle$, $\langle\nabla\nu_i,\nabla\nu_i\rangle$, and $\langle \nabla\nu_i,\nabla\nu_j\rangle$ (and recall that $J^2=-\epsilon_1\epsilon_2\mathrm{id}$); third and last, we can check that all terms of the form $\langle J\nabla\nu_i,\nabla\nu_j\rangle$ cancel out by means of~\eqref{lem:Xi:eqn2}, whereas all terms of the form $\langle\nabla\nu_i,\nabla\nu_i\rangle$ can be transformed into combinations of terms of the form $\langle \nabla\nu_i,\nabla\nu_j\rangle$ using~\eqref{lem:Xi:eqn1}, which in turn cancel out when grouped with the rest of terms. In this process, it is important to point out that (by collecting and simplifying the coefficients by means of $\sum_{\alpha=1}^3\epsilon_\alpha\nu_\alpha^2=\epsilon_3$) it is not necessary to divide by any of the angles $\nu_i$ in order to apply~\eqref{lem:Xi:eqn1} and~\eqref{lem:Xi:eqn2}. 

Given some $p\in\Sigma$, since $\sum_{\alpha=1}^3\epsilon_\alpha\nu_\alpha^2=\epsilon_3$, there must be some $\gamma\in\{1,2,3\}$ such that $\epsilon_\gamma-\epsilon_3\nu_\gamma^2\neq 0$ at $p$, so we can define $\psi=\frac{\langle X_\gamma,X_\gamma\rangle}{\epsilon_\gamma-\epsilon_3\nu_\gamma^2}$ in a neighborhood of $p$. The formula in item (a) shows that this definition is independent of the choice of $\gamma$.

As for item (b), we can reason likewise by writing the identity we want to prove as $(\epsilon_\gamma-\epsilon_3\nu_\gamma^2)\langle X_\alpha,X_\beta\rangle-(\epsilon_\alpha\delta^\alpha_\beta-\epsilon_3\nu_\alpha\nu_\beta)\langle X_\gamma,X_\gamma\rangle=0$ for some $\gamma\in\{1,2,3\}$ such that $\epsilon_\gamma-\epsilon_3\nu_\gamma^2\neq 0$. The identities~\eqref{lem:Xi:eqn1} and~\eqref{lem:Xi:eqn2} apply as explained for item (a). 

The first identity in item (c) follows by adding those in item (b) with $\alpha=\beta$. The second identity in (c) is a direct consequence of~\eqref{eqn:Xi}, whereas the third identity in (c) can be transformed into $\sum_{\alpha=1}^3\epsilon_\alpha\langle X_\alpha,2J\nabla\nu_\alpha-X_\alpha\rangle=0$ by means of the first one and then apply the same strategy as in item (a).
\end{proof}

We shall assume for a moment that $\Sigma$ is actually immersed in $G$ with angles $\nu_1,\nu_2,\nu_3$ to gain some intuition. From the compatibility equations~\ref{eqn:comp-iii} and~\ref{eqn:comp-v}, along with the fact that $JS+SJ=2H\epsilon_3 J$, it is easy to check that 
\begin{equation}\label{eqn:gaussdata:eqn1}
X_\alpha = - 2 H\epsilon_3 JT_\alpha + \epsilon_1\epsilon_2\epsilon_3\zeta\, T_\alpha,\qquad\text{where }\zeta=\textstyle\sum_{\gamma=1}^3c_\gamma\nu_\gamma^2.
\end{equation}
for all $\alpha\in\{1,2,3\}$. Let us fix $\alpha$ such that $\epsilon_\alpha-\epsilon_3\nu_\alpha^2\neq 0$ (recall that such an $\alpha$ always exists). Since $\langle T_\alpha,T_\alpha\rangle=\epsilon_1\epsilon_2\langle JT_\alpha,JT_\alpha\rangle^2=\epsilon_\alpha-\epsilon_3\nu_\alpha^2\neq 0$ and $\langle T_\alpha,JT_\alpha\rangle=0$, we can work out $\psi=\frac{\langle X_\alpha,X_\alpha\rangle}{\epsilon_\alpha-\epsilon_3\nu_\alpha^2}$ by taking squared norms in~\eqref{eqn:gaussdata:eqn1}, which yields
\begin{equation}\label{eqn:gaussdata:eqn2}
\psi=4H^2\epsilon_1\epsilon_2+\zeta^2.
\end{equation}
This sheds light on item (a) of Lemma~\ref{lem:Xi} since the right-hand side of~\eqref{eqn:gaussdata:eqn2} does not depend on $\alpha$. At points where $\psi\neq 0$, we can solve for $T_\alpha$ in~\eqref{eqn:gaussdata:eqn1} to reach
\begin{equation}\label{eqn:gaussdata:eqn3}
T_\alpha=\frac{\epsilon_1\epsilon_2\epsilon_3\zeta}{\psi}X_\alpha+\frac{2H\epsilon_3}{\psi}JX_\alpha.
\end{equation}
It might seem that~\eqref{eqn:gaussdata:eqn3} uniquely determines $T_\alpha$ only in terms of $\nu_1,\nu_2,\nu_3$ and $J$, because $H^2$ can be retrieved from~\eqref{eqn:gaussdata:eqn2}. Unfortunately, the sign of $H$ is undefined and we will see that different signs potentially lead to non-congruent immersions. However, the existence of such immersions will depend on whether or not~\eqref{eqn:gaussdata:eqn3} (for $H$ or $-H$) satisfies the hypotheses of Theorem~\ref{thm:fundamental} when the shape operator $S$ is necessarily given by~\eqref{thm:fundamental:eqn1}. In the next result, we show that the algebraic relations~\ref{eqn:comp-iii} and the symmetry of $S$ turn out to be automatically satisfied so the only nontrivial restrictions are the compatibility equations~\ref{eqn:comp-iv}.

\begin{theorem}\label{thm:angles}
	Let $G$ be the semi-Riemannian unimodular metric Lie group of diagonalizable type with structure constants $c_1,c_2,c_3$ and signs $\epsilon_1,\epsilon_2,\epsilon_3$. Let $\Sigma$ be a semi-Riemannian simply connected surface oriented by $J$ with signs $\epsilon_1,\epsilon_2$ and let $\nu_1,\nu_2,\nu_3\in\mathcal C^\infty(\Sigma)$ be such that $\sum_{\alpha=1}^3\epsilon_\alpha\nu_\alpha^2=\epsilon_3$. Assume that the function $\psi$ defined by Lemma~\ref{lem:Xi} never vanishes, and let $\zeta=\sum_{\alpha=1}^3c_\alpha\nu_\alpha^2$.
	\begin{enumerate}[label=\emph{(\alph*)}]
		\item There are at most as many isometric immersions of $\Sigma$ into $G$ (up to left-translations) with prescribed angles $\nu_1,\nu_2,\nu_3$ and orientation $J$ as choices of a smooth function $H\in\mathcal C^\infty(\Sigma)$ such that $\psi=4H^2\epsilon_1\epsilon_2+\zeta^2$ (pointwise speaking, there are at most two possible values of $H$). The function $H$ is \emph{a posteriori} the mean curvature of the isometric immersion.

		\item For each such choice of $H\in\mathcal C^\infty(\Sigma)$, the existence of the immersion only depends on whether or not $T_1,T_2,T_3$ and $S$ defined by~\eqref{eqn:gaussdata:eqn3} and~\eqref{thm:fundamental:eqn1}, respectively, satisfy the compatibility equations~\ref{eqn:comp-iv}.

		\item If two such immersions $\phi$ and $\widetilde\phi$ exist (for two choices of $H$), then there is $\theta\in\mathcal C^\infty(\Sigma)$ such that their tangent projections are rotated by an angle of $\theta$, i.e., for all $\alpha\in\{1,2,3\}$ we have
		\[\widetilde T_\alpha=\Rot_\theta(T_\alpha)=\begin{cases}\cos(\theta) T_\alpha+\sin(\theta) JT_\alpha&\text{if }\epsilon_1\epsilon_2=1,\\ \cosh(\theta) T_\alpha+\sinh(\theta) JT_\alpha&\text{if }\epsilon_1\epsilon_2=-1.
		\end{cases}\]

		\item If $H\in\mathcal C^\infty(\Sigma)$ is chosen with sign (equivalently, $\epsilon_1\epsilon_2(\psi-\zeta^2)>0$) and gives rise to an immersion $\phi$, then the immersion $\widetilde\phi$ corresponding to the opposite sign (i.e., $\widetilde H=-H$) exists if and only if
		\begin{equation}\label{eqn:spm}
		\zeta\,\nabla\log|H|=\nabla\zeta+\sum_{\alpha=1}^3\epsilon_\alpha\mu_\alpha\nu_\alpha JX_\alpha,
		\end{equation}
		in which case the angle $\theta$ given in item \emph{(c)} satisfies $\tan(\frac{\theta}{2})=\frac{-2H}{\zeta}$ if $\Sigma$ is spacelike or $\tanh(\frac{\theta}{2})=\frac{-2H}{\zeta}$ if $\Sigma$ is timelike.
	\end{enumerate}
\end{theorem}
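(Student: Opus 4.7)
The plan is to reduce everything to Theorem~\ref{thm:fundamental} by using~\eqref{eqn:gaussdata:eqn3} to construct candidate tangent projections $T_\alpha$ from the prescribed angle data for each admissible choice of mean curvature $H$. Item (a) is essentially already in the text: if an immersion with these angles exists, then~\eqref{eqn:gaussdata:eqn1}--\eqref{eqn:gaussdata:eqn2} force $\psi=4H^2\epsilon_1\epsilon_2+\zeta^2$, so $H$ is \emph{a posteriori} the mean curvature and lies in the at-most-two-branch set cut out by this equation, bounding the number of immersions.

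For item (b), set $T_\alpha$ via~\eqref{eqn:gaussdata:eqn3} and $S$ via~\eqref{thm:fundamental:eqn1}; I need to verify three things to invoke Theorem~\ref{thm:fundamental}: (i) the algebraic hypothesis $\sum_\alpha\epsilon_\alpha\langle T_\alpha,e_i\rangle\langle T_\alpha,e_j\rangle=\epsilon_i\delta^j_i$; (ii) that the functions produced by~\eqref{eqn:nui-from-Ti} coincide with the prescribed $\nu_\alpha$; and (iii) the self-adjointness criterion at the end of Theorem~\ref{thm:fundamental}. For (i), expanding $\langle T_\alpha,T_\beta\rangle$ from~\eqref{eqn:gaussdata:eqn3} and using $\langle Ju,v\rangle=-\langle u,Jv\rangle$ to cancel the $\langle X_\alpha,JX_\beta\rangle$ cross terms yields $\langle T_\alpha,T_\beta\rangle=\psi^{-1}(\zeta^2+4H^2\epsilon_1\epsilon_2)\langle X_\alpha,X_\beta\rangle=\langle X_\alpha,X_\beta\rangle/\psi=\epsilon_\alpha\delta^\beta_\alpha-\epsilon_3\nu_\alpha\nu_\beta$ by Lemma~\ref{lem:Xi}(b), which is equivalent to (i) via the $\mathrm{SO}^\epsilon_3$-symmetry of the associated change-of-basis matrix. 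For (ii), the key identity $\sum_\alpha\epsilon_\alpha\nu_\alpha X_\alpha=0$ follows from a direct expansion of~\eqref{eqn:Xi} and the normalisation $\sum_\alpha\epsilon_\alpha\nu_\alpha^2=\epsilon_3$; it transfers to $\sum_\alpha\epsilon_\alpha\nu_\alpha T_\alpha=0$, which together with (i) pins down the $\nu_\alpha$ up to a global sign fixed by the choice of orientation. For (iii), substituting $JT_\alpha=\psi^{-1}(\epsilon_1\epsilon_2\epsilon_3\zeta JX_\alpha-2H\epsilon_3\epsilon_1\epsilon_2 X_\alpha)$ and invoking Lemma~\ref{lem:Xi}(c) turns $\sum_\alpha\epsilon_\alpha\langle\nabla\nu_\alpha,JT_\alpha\rangle$ into $-\epsilon_1\epsilon_2\epsilon_3\zeta$, which is exactly the negative of $\epsilon_1\epsilon_2\epsilon_3\sum_\alpha c_\alpha\nu_\alpha^2$, so $S$ is automatically self-adjoint.

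For item (c), the fields $X_\alpha$ depend only on $\nu_\alpha$ and $J$ and are thus common to any two admissible immersions. Substituting $X_\alpha=\epsilon_1\epsilon_2\epsilon_3\zeta T_\alpha-2H\epsilon_3 JT_\alpha$ into~\eqref{eqn:gaussdata:eqn3} written for $\widetilde H$ gives
\[\widetilde T_\alpha=\tfrac{1}{\psi}\bigl((\zeta^2+4H\widetilde H\epsilon_1\epsilon_2)T_\alpha+2(\widetilde H-H)\zeta\epsilon_1\epsilon_2\, JT_\alpha\bigr),\]
whose coefficients form a rotation (spacelike) or hyperbolic rotation (timelike) matrix in the frame $\{T_\alpha,JT_\alpha\}$, with uniquely defined angle $\theta$. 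Specialising to $\widetilde H=-H$ in (d) and applying the half-angle formulas to the resulting $\cos\theta,\sin\theta$ (resp.\ $\cosh\theta,\sinh\theta$) yields the explicit value of $\tan(\theta/2)$ (resp.\ $\tanh(\theta/2)$) in the statement.

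Finally, for the equivalence between the existence of the sign-flipped immersion and~\eqref{eqn:spm}, the strategy is to impose~\ref{eqn:comp-iv} for the rotated fields $\widetilde T_\alpha=\Rot_\theta T_\alpha$. By the parallelism of $J$, $\nabla_X\widetilde T_\alpha=\Rot_\theta\nabla_X T_\alpha+\Rot_\theta'(T_\alpha)\,X(\theta)$; subtracting $\Rot_\theta$ applied to~\ref{eqn:comp-iv} for $T_\alpha$ cancels the cross terms and isolates a linear combination of $T_\alpha$ and $JT_\alpha$ whose coefficients mix $\nabla\theta$ and the difference $\widetilde SX-\Rot_\theta SX$. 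Using the half-angle formula to write $\nabla\theta$ as a multiple of $\zeta\nabla H-H\nabla\zeta$ (via $\sec^2(\theta/2)=\psi/\zeta^2$) and computing $\widetilde S-\Rot_\theta S$ from~\eqref{thm:fundamental:eqn1} applied to $\widetilde T_\alpha$ versus $T_\alpha$, the resulting identity, after normalisation, reduces to exactly~\eqref{eqn:spm}. Conversely, if~\eqref{eqn:spm} holds, one defines $\theta$ and $\widetilde T_\alpha$ by the formulas above and verifies~\ref{eqn:comp-iv} by reading the same computation in reverse. The hardest step is this last bookkeeping: the term $\sum_\alpha\epsilon_\alpha\mu_\alpha\nu_\alpha JX_\alpha$ appearing in~\eqref{eqn:spm} emerges only after the cross terms of~\ref{eqn:comp-iv} are rewritten using~\eqref{eqn:JEi} and the definition~\eqref{eqn:Xi} of $X_\alpha$, with careful use of the identities behind Lemma~\ref{lem:Xi}.
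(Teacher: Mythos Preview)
Your treatment of items (a)--(c) is essentially the paper's. One gap: you never verify that the $H$ you chose is in fact the mean curvature of the resulting immersion, i.e., that $\trace(S)=2\epsilon_3H$ for the operator $S$ built from~\eqref{thm:fundamental:eqn1}. The paper handles this by first showing that the compatibility equations~\ref{eqn:comp-v} are \emph{automatically} satisfied once $T_\alpha$ and $S$ are defined via~\eqref{eqn:gaussdata:eqn3} and~\eqref{thm:fundamental:eqn1} (this uses the algebraic relations and the spherical identities behind Lemma~\ref{lem:Xi}), and then computing $\trace(S)=\sum_\alpha\epsilon_\alpha\langle ST_\alpha,T_\alpha\rangle=-\sum_\alpha\epsilon_\alpha\langle\nabla\nu_\alpha,T_\alpha\rangle=2H\epsilon_3$ by~\ref{eqn:comp-v} and Lemma~\ref{lem:Xi}(c). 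This step is also what justifies, in item~(d), reducing~\ref{eqn:comp-iv} to its $JT_\alpha$-component only: the $T_\alpha$-component is automatic once~\ref{eqn:comp-v} holds.

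For item (d) your route via $\widetilde T_\alpha=\Rot_\theta T_\alpha$ and $\nabla\theta$ is genuinely different from the paper's and considerably harder. The paper instead works in the frame $\{X_\alpha,JX_\alpha\}$: writing $T_\alpha=aX_\alpha+bJX_\alpha$ with $a=\epsilon_1\epsilon_2\epsilon_3\zeta/\psi$ and $b=2H\epsilon_3/\psi$, the flip $H\mapsto-H$ is simply $b\mapsto-b$ with $a$ fixed, so the second immersion exists iff both sides of $\langle\nabla_XT_\alpha,JT_\alpha\rangle$ in~\ref{eqn:comp-iv} are \emph{even} in $b$. Isolating the odd parts gives $\epsilon_3b(\epsilon_\alpha-\epsilon_3\nu_\alpha^2)\langle X,\zeta\nabla\log|H|-\nabla\zeta\rangle$ on the left and $\epsilon_3b(\epsilon_\alpha-\epsilon_3\nu_\alpha^2)\langle X,\sum_\gamma\epsilon_\gamma\mu_\gamma\nu_\gamma JX_\gamma\rangle$ on the right, and their equality is exactly~\eqref{eqn:spm}. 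Your subtraction argument runs into trouble because the cross-terms of~\ref{eqn:comp-iv} do not transform as $\Rot_\theta C_\alpha(X)$ under $T_\alpha\mapsto\Rot_\theta T_\alpha$ but rather as $\Rot_\theta C_\alpha(\Rot_{-\theta}X)$, so ``subtracting $\Rot_\theta$ applied to~\ref{eqn:comp-iv}'' leaves a residual $\Rot_\theta C_\alpha(X-\Rot_{-\theta}X)$ that must still be unwound together with the $\widetilde S-\Rot_\theta S$ term you mention. This can probably be pushed through, but the parity argument is shorter and never requires computing $\widetilde S$ or $\nabla\theta$.
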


\begin{proof}
If the immersion exists, then $H$ must be a smooth function satisfying~\eqref{eqn:gaussdata:eqn2}, and conversely, for each such $H$, the immersion exists if and only if $T_1,T_2,T_3$ and $S$ satisfy Theorem~\ref{thm:fundamental}. On the one hand, from~\eqref{eqn:gaussdata:eqn3} and item (b) of Lemma~\ref{lem:Xi}, we get the algebraic relations $\langle T_\alpha,T_\beta\rangle=\psi^{-1}\langle X_\alpha,X_\beta\rangle=\epsilon_\alpha\delta^\alpha_\beta-\epsilon_3\nu_\alpha\nu_\beta$. Therefore, the three rows of the matrix $M$ in~\eqref{eqn:M} form an orthonormal frame, and hence the columns also form an orthonormal frame, that is, $\sum_{\alpha=1}^3\epsilon_\alpha\langle T_\alpha,e_i\rangle\langle T_\alpha,e_j\rangle=\epsilon_i\delta^j_i$ for all $i,j\in\{1,2\}$ on a positive orthonormal frame $\{e_1,e_2\}$ with signature $\epsilon_1,\epsilon_2$. On the other hand, the condition for the symmetry of $S$ in Theorem~\ref{thm:fundamental} can be written as $\sum_{\alpha=1}^3\epsilon_\alpha\langle\nabla\nu_\alpha,JT_\alpha\rangle=-\epsilon_1\epsilon_2\epsilon_3\zeta$, which by~\eqref{eqn:gaussdata:eqn3} can be in turn transformed into $\frac{\zeta}{\psi}\sum_{\alpha=1}^3\epsilon_\alpha\langle J\nabla\nu_\alpha,X_\alpha\rangle+\frac{2H}{\psi}\sum_{\alpha=1}^3\epsilon_\alpha\langle\nabla\nu_\alpha,X_\alpha\rangle=
\zeta$, and this last identity follows readily from item (c) of Lemma~\ref{lem:Xi}. All in all, Theorem~\ref{thm:fundamental} says that the immersion exists if and only if~\ref{eqn:comp-iv} are fulfilled, and uniqueness is guaranteed up to left-translations, that is, items (a) and (b) in the statement are proved, except for the last assertion in item (a). 

We will see next that the compatibility equations~\ref{eqn:comp-v} are also satisfied automatically (this simplifies the forthcoming arguments). Observe that~\ref{eqn:comp-v} holds true if and only if it holds true when multiplied by any $T_\alpha$. As for the first equation (the other two can be proved similarly), this means we have to show that
\begin{align*}
\langle\nabla\nu_1,T_\alpha\rangle&=-\sum_{\beta=1}^3\epsilon_\beta(\mu_\beta\langle T_\beta,T_1\rangle\langle JT_\beta,T_\alpha\rangle-\langle\nabla\nu_\beta,T_1\rangle\langle T_\beta,T_\alpha\rangle)\\
&\qquad +\epsilon_2\epsilon_3(\mu_3\nu_2\langle T_3,T_\alpha\rangle-\mu_2\nu_3\langle T_2,T_\alpha\rangle)
\end{align*}
for all $\alpha\in\{1,2,3\}$. As in the proof of Lemma~\ref{lem:Xi}, this computation is burdensome but again it follows from the algebraic relations for $T_\alpha$ we have already proved (it is very useful to check first that~\eqref{eqn:JEi} holds just by these algebraic relations) plus the spherical constraints given by~\eqref{lem:Xi:eqn1} and~\eqref{lem:Xi:eqn2}. Knowing that~\ref{eqn:comp-v} is true, we can go back to the last sentence in item (a), and compute
\begin{align*}
\trace(S)&\stackrel{(\star)}{=}\sum_{\alpha=1}^3\epsilon_\alpha\langle ST_\alpha,T_\alpha\rangle=-\sum_{\alpha=1}^3\epsilon_\alpha\langle\nabla\nu_\alpha,T_\alpha\rangle\\
&=-\tfrac{\epsilon_1\epsilon_2\epsilon_3\zeta}{\psi}\sum_{\alpha=1}^3\epsilon_\alpha\langle\nabla\nu_\alpha,X_\alpha\rangle+\tfrac{2H\epsilon}{\psi}\sum_{\alpha=1}^3\epsilon_\alpha\langle J\nabla\nu_\alpha,X_\alpha\rangle=2H\epsilon_3,
\end{align*}
where we have used~\eqref{eqn:gaussdata:eqn3} and item (c) of Lemma~\ref{lem:Xi}. We would like to remark that the equality $(\star)$ is true for any operator and follows from elementary linear algebra along with the algebraic relations.

As for item (c), again the algebraic relations imply that $\langle \widetilde T_\alpha,\widetilde T_\beta\rangle=\langle T_\alpha, T_\beta\rangle$ for all $\alpha,\beta\in\{1,2,3\}$ (both immersions share the angle functions). Since the tangent projections span the tangent plane, there must be an isometry $R$ at each tangent plane of $\Sigma$ such that $\widetilde T_\alpha=R(T_\alpha)$. Equation~\eqref{eqn:gaussdata:eqn3} implies that either $R$ is the identity or a proper rotation at each point depending on whether the mean curvatures agree or they are opposite. Pointwise speaking, this means that $\widetilde{T}_\alpha=\cos(\theta)T_\alpha + \sin(\theta)JT_\alpha$ if $\Sigma$ is spacelike or $\widetilde{T}_\alpha=\cosh(\theta)T_\alpha + \sinh(\theta)JT_\alpha$ if $\Sigma$ is timelike, with $\theta=0$ if $R$ is the identity. We deduce that $\theta$ is smooth because in a neighborhood of any point for we can always choose the index $\alpha$ such that $T_\alpha\neq 0$. Observe that, as $\Sigma$ is assumed simply connected, $\theta$ can be actually defined globally (otherwise, we should consider $\theta$ modulo $2\pi$).

Finally, we will assume that $H$ is positive in order to prove item (d). Let $T_1,T_2,T_3,S$ be the data of the associated immersion. The first observation is that the equation for $\nabla_XT_\alpha$ in~\ref{eqn:comp-iv}, when multiplied by $T_\alpha$ is automatically fulfilled. For instance, when $\alpha=1$, we get from~\ref{eqn:comp-v} and the symmetry of $S$ that
\begin{align*}
	\langle\nabla_XT_1,T_1\rangle&=\tfrac{1}{2}X\langle T_1,T_1\rangle=\tfrac{1}{2}X(\epsilon_1-\epsilon_3\nu_1^2)=-\epsilon_3\nu_1\langle \nabla\nu_1,X\rangle\\
	&=\epsilon_3\nu_1\langle ST_1,X\rangle-\epsilon_2\nu_1(\mu_3\nu_2\langle T_3,X\rangle-\mu_2\nu_3\langle T_2,X\rangle),
\end{align*}
which agrees with~\ref{eqn:comp-iv} innerly multiplied by $T_1$. Therefore, the most interesting information in $\nabla_X T_\alpha$ is its component in the direction $JT_\alpha$. Furthermore, the other immersion corresponding to the opposite sign $\widetilde H=-H$ will exist if and only if after expanding both sides of~\ref{eqn:comp-iv} multiplied by $JT_\alpha$, we end up with an even function of $H$. For simplicity, write $T_\alpha=aX_\alpha+bJX_\alpha$ with $a=\frac{\epsilon_1\epsilon_2\epsilon_3\zeta}{\psi}$ and $b=\frac{2H\epsilon_3}{\psi}$ as in~\eqref{eqn:gaussdata:eqn3}, so being even in $H$ amounts to being even in $b$. On the one hand, the left-hand side of~\ref{eqn:comp-iv} gives
\begin{align}
	\langle \nabla_X T_\alpha,JT_\alpha\rangle &=\langle \nabla_X (aX_\alpha+bJX_\alpha),aJX_\alpha-\epsilon_1\epsilon_2bX_\alpha\rangle\notag\\
	&=E(b)-\epsilon_1\epsilon_2 b(X(a)\langle X_\alpha,X_\alpha\rangle+a\langle\nabla_XX_\alpha,X_\alpha\rangle)\notag\\
	&\qquad +a(X(b)\langle JX_\alpha,JX_\alpha\rangle+b\langle\nabla_XJX_\alpha,JX_\alpha\rangle)\notag\\
	&=E(b)+\epsilon_1\epsilon_2(aX(b)-bX(a))\langle X_\alpha,X_\alpha\rangle\notag\\
	&=E(b)+\epsilon_3b(\epsilon_\alpha-\epsilon_3\nu_\alpha^2)\langle X,\zeta\nabla\log|H|-\log(\zeta)\rangle,\label{thm:angles:eqn1}
\end{align}
where $E(b)$ collects all terms which are even in $b$ (notice that the gradient of the logarithm ignores the sign of $H$). On the other hand, the right-hand side of~\ref{eqn:comp-iv} multiplied by $JT_\alpha$ can be expanded by means of~\ref{eqn:comp-iii} and~\ref{eqn:comp-v} plus the symmetry of $S$, and then simplified by means of the spherical constraints~\eqref{lem:Xi:eqn1} and~\eqref{lem:Xi:eqn2} as in the proof of Lemma~\ref{lem:Xi}. After onerous but simple computations, we reach
\begin{equation}\label{thm:angles:eqn2}
	\langle \nabla_X T_\alpha,JT_\alpha\rangle =\widehat{E}(b)+\epsilon_3b(\epsilon_\alpha-\epsilon_3\nu_\alpha^2)\bigl\langle X,\textstyle\sum_{\alpha=1}^3\epsilon_\alpha\mu_\alpha\nu_\alpha JX_\alpha\bigr\rangle,
\end{equation}
where again $\widehat{E}(b)$ contains all even terms. By comparing~\eqref{thm:angles:eqn1} and~\eqref{thm:angles:eqn2} for some $\alpha$ with $\epsilon_\alpha-\epsilon_3\nu_\alpha^2\neq 0$, the condition in the statement follows.
\end{proof}

\begin{remark}~\label{rmk:kenmotsu}
 In the Riemannian setting, the existence and local uniqueness of at most two isometric immersions with opposite mean curvature functions (around points with $\psi\neq 0$) can be actually deduced from the work of Gálvez--Mira~\cite[Thm.~3.1]{GM16}, where the authors give a more general statement for conformal immersions in both the unimodular and non-unimodular cases. In our Theorem~\ref{thm:angles}, item (a), we prescribe the metric of $\Sigma$, which amounts to prescribing the conformal factor in Gálvez and Mira's result, from where one finds the value of $H^2$, cf.~\cite[Eqns.~(2.10) and~(3.15)]{GM16}. Our approach to existence is a bit more involved, but it also sheds some light into the role of the tangent projections, and relies only on a first-order \textsc{ode} system. The rest of items in Theorem~\ref{thm:angles} are not discussed in~\cite{GM16}.
\end{remark}

Isometric immersions that share the angle functions will be called \emph{angular companions} in what follows. It seems tough in general to find explicit companions since, except for some particular cases that will be explored throughout the next examples, Equation~\eqref{eqn:spm} admits no simple resolution. In these examples, we will consider only the Riemannian case $\epsilon_1=\epsilon_2=\epsilon_3=1$, because it captures the main ideas, though some of them can be extended to the rest of signatures. In fact, we will begin by considering the case of Euclidean space $\R^3$, in which will also analyze the case $\psi=0$ not considered in Theorem~\ref{thm:angles}.

\begin{example}[Antipodal immersions]\label{ex:antipodal}
Let $\mathbb{R}^3$ endowed with the Euclidean metric, which satisfies $\mu_1=\mu_2=\mu_3=0$ and $\zeta=0$, so that~\eqref{eqn:spm} holds. We will see that all surfaces have angular companions even in the minimal case (in which $\psi\equiv0$ and Theorem~\ref{thm:angles} does not apply). Let $\phi:\Sigma\to\R^3$ be \emph{any} immersion and let $N$ be the unit normal. The antipodal map $\tau(x,y,z)=(-x,-y,-z)$ is an isometry, whence $\tau\circ\phi:\Sigma\to\R^3$ is also an isometric immersion with the same mean curvature but opposite angles when $-N=\tau_*N$ is chosen as unit normal. However, $\tau$ reverses the orientation, so the proper choice of normal for $\tau\circ\phi$ compatible with the orientation of $\Sigma$ is the opposite (i.e., $N=-\tau_*N$), which gives the same angle functions but opposite mean curvature as $\phi$ (note also that this antipodal immersion $\tau\circ\phi$ changes the signs of $T_1,T_2,T_3$). We deduce that $\phi$ and $\tau\circ\phi$ are angular companions.
\end{example}

\begin{example}[Bonnet associate family]\label{ex:bonnet} 
The case $\psi\equiv 0$ in $\R^3$ can be elaborated further. A minimal isometric immersion $\phi:\Sigma\to\R^3$ has Weierstrass data $(\mathcal{G},\eta)$, where $\mathcal G=\varphi(\nu_1,\nu_2,\nu_3):U\to\C$ is the stereographic projection of the Gauss map and $\eta$ is the height differential. If another minimal isometric immersion $\widetilde\phi:\Sigma\to\R^3$ has the same Gauss map, then its data are $(\mathcal G,\widetilde\eta)$ with possibly a different $\widetilde\eta$. In an isothermal coodinate $z$, we can express $\eta=f(z)\df z$ and $\widetilde\eta=\widetilde f(z)\df z$, and the intrinsic curvature of $\Sigma$ is well known to be $K=-16|\mathcal G'|^2/(|f|^2(1+|\mathcal G|^2)^2)=-16|\mathcal G'|^2/(|\widetilde f|^2(1+|\mathcal G|^2)^2)$, from where we deduce that $|\widetilde f|=|f|$ at non-flat points. Since $f$ and $\widetilde f$ are holomorphic, there is $\theta\in\R$ such that $\widetilde f=e^{i\theta}f$, and hence $\phi$ and $\widetilde\phi$ are in the same Bonnet family. Although this argument is local, it becomes global by the analiticity of minimal surfaces. Notice that, if all points happen to be flat, then $\phi$ and $\widetilde\phi$ are planes (with constant angle functions).
\end{example}

\begin{example}[Vertical cylinders]\label{ex:cylinders}
In $\Nil$ (with $\mu_1=\mu_2=\tau$ and $\mu_3=-\tau$), the condition $\zeta=0$ reduces to $\nu_3=0$, which leads to surfaces tangent to the unit Killing vector field $E_3$. This easily implies that the sum in~\eqref{eqn:spm} is also zero and hence all these \emph{vertical cylinders} have angular companions. Notice that such a cylinder is flat and can be parametrized in the Cartan model as 
\[(u,v)\in I\times\R\mapsto \left(x(u),y(u),v+\tau z(u)\right),\]
where $t\mapsto (x(t),y(t))$ is a unit speed curve defined in an interval $I\subseteq\R$ containing the origin, and $z(t)=\int_0^t(x(s)y'(s)-y(s)x'(s)))\,\df s$. This parametrization is chosen so that curves with constant $u$ run in the direction of $E_3$ and curves with constant $v$ are orthogonal to $E_3$, whence the induced metric is $\df u^2+\df v^2$. It is not difficult to check that the angular companion is $(u,v)\mapsto \left(-x(u),-y(u),-v+\tau z(u)\right)$.

More generally, let $\phi:\Sigma\to\E(\kappa,\tau)$ be an isometric immersion, i.e., $c_1=c_2=\frac{\kappa}{2\tau}$ and $c_3=2\tau\neq 0$. Since $\zeta=\tfrac{\kappa-(\kappa-4\tau^2)\nu_3^2}{2\tau}$, the condition~\eqref{eqn:spm} reads 
\begin{equation}\label{eqn:spm:ekt}
\begin{aligned}
		\zeta\,\nabla\log|H|&=\nabla\zeta+\tfrac{\kappa-4\tau^2}{2\tau}\nu_3 J\left(J\nabla\nu_3+\nu_2\nabla\nu_1-\nu_1\nabla\nu_2\right)\\
&=\tfrac{\kappa-4\tau^2}{2\tau}\nu_3\left(2HT_3+\zeta JT_3-2\nabla\nu_3\right),
\end{aligned}
\end{equation}
where the last equality follows by expanding the gradients by means of the compatibility equations~\ref{eqn:comp-v} and using the identity $JS+SJ=2HJ$. Equation~\eqref{eqn:spm:ekt} does not seem to have an easy resolution, even for \textsc{cmc} surfaces. It would be interesting to investigate if this condition is related to helicoidal Bonnet mates~\cite{GMM08}.

As a matter of fact, if we further restrict to vertical cylinders ($\nu_3=0$) and we assume $\kappa\neq 0$ (i.e., the space is not $\Nil$), we get that $\zeta=\frac{\kappa}{2\tau}\neq 0$, and hence~\eqref{eqn:spm:ekt} reduces to $\nabla H=0$. Therefore, the surface projects to a curve of constant curvature $2H$ in $\mathbb{M}^2(\kappa)$. If $4H^2+\kappa>0$, this can be seen as an Euclidean cylinder
\[(u,v)\in\R^2\mapsto (r\cos(u),r\sin(u),\tfrac{4r}{4+\kappa r^2}(v+\tau r u)).\]
in the Cartan model. Again, $u$ and $v$ are orthogonal coordinates with $v$ running in the direction of $E_3$, and the induced metric reads $\frac{16r^2}{(4+\kappa r^2)^2}(\df u^2+\df v^2)$. The radius $r$ satisfies $4+\kappa r^2>0$ (so the surface actually lies in the Cartan model) and the mean curvature is given by $H=\frac{-4+\kappa r^2}{8r}$. The angular companion of this cylinder is nothing but a reparametrization of the same cylinder given by
\[(u,v)\in\R^2\mapsto (-r\cos(\overline{u}),r\sin(\overline{u}),\tfrac{4r}{4+\kappa r^2}(\overline{v}-\tau r \overline{u})),\]
where $(\overline{u},\overline{v})$ is a rotation of the parameters $(u,v)$ given by
\[\overline{u}=\tfrac{\kappa^2-16H^2\tau^2}{\kappa^2+16H^2\tau^2}u-\tfrac{16H\kappa\tau}{\kappa^2+16H^2\tau^2}v,\qquad \overline{v}=\tfrac{16H\kappa\tau}{\kappa^2+16H^2\tau^2}u+\tfrac{\kappa^2-16H^2\tau^2}{\kappa^2+16H^2\tau^2}v.\]
Interestingly, this rotation is an intrinsic isometry of the cylinder which is not extrinsic. A similar reasoning works if $4H^2+\kappa\leq0$, $\kappa<0$ and $H\neq 0$ (in which case the cylinder is over a horocycle or an equidistant curve to a geodesic of $\mathbb{H}^2(\kappa)$).
\end{example}

\begin{example}[Twin $H$-immersions in $\mathbb{S}^3$]\label{ex:twin}
In the sphere $\mathbb{S}^3$, with constant curvature $1$, we have $\kappa=4$ and $\tau=1$, whence $c_1=c_2=c_3=2$ and $\zeta=2$ is constant. Equation~\eqref{eqn:spm:ekt} reduces to $\nabla H=0$ (with $H\neq 0$), so all non-zero \textsc{cmc} immersions into $\mathbb{S}^3$ do have angular companions.

Assume that $T_1,T_2,T_3,S$ and $\widetilde T_1,\widetilde T_2,\widetilde T_3,\widetilde S$ are the remaining data for these companions of \textsc{cmc} $H$ and $\widetilde H=-H$, respectively. From~\eqref{eqn:gaussdata:eqn3} we get $\widetilde T_\alpha=\Rot_\theta(T_\alpha)$ with $\theta=-2\arctan(H)$, whereas~\eqref{thm:fundamental:eqn1} yields
\begin{equation}\label{ex:twin:eqn1}
SX=\sum_{\alpha=1}^3\bigl(\langle T_\alpha,X\rangle JT_\alpha-\langle\nabla\nu_\alpha,X\rangle T_\alpha\bigr)=JX-\sum_{\alpha=1}^3\langle\nabla\nu_\alpha,X\rangle T_\alpha.
\end{equation}
The same computation for the angular companion implies that
\begin{equation}\label{ex:twin:eqn2}
\widetilde SX=JX-\sum_{\alpha=1}^3\langle\nabla\nu_\alpha,X\rangle\widetilde T_\alpha=JX-\operatorname{Rot}_\theta\left(\sum_{\alpha=1}^3\langle\nabla\nu_\alpha,X\rangle T_\alpha\right),
\end{equation}
and combining~\eqref{ex:twin:eqn1} and~\eqref{ex:twin:eqn2} we conclude that $\widetilde S-J={\Rot_\theta}\circ{(S-J)}$. Taking into account that $\Rot_\theta=\frac{1-H^2}{1+H^2}\mathrm{id}-\frac{2H}{1+H^2}J$, this last relation can be rewritten as $\widetilde S-\widetilde H\,\mathrm{id}={\Rot_\theta}\circ{(S- H\,\mathrm{id})}$, that is, the traceless shape operator rotates a constant angle. We deduce that the angular companion of a \textsc{cmc} surface in $\mathbb{S}^3$ belongs to Lawson's associate family, see~\cite[Thm.~8]{Lawson70}. Indeed, these angular companions are \emph{twin immersions} in the sense of Daniel, see~\cite[Thm.~5.14]{Daniel07}. Note that our argument in particular shows that twin immersions have the same angle functions (it is not so well known that Lawson correspondence when acting on surfaces in $\mathbb{S}^3$ and $\mathbb{R}^3$ preserves the angle functions up to ambient isometries, see~\cite{GB93}). It is also important to point out that twin $H$-surfaces in $\mathbb{S}^3$ are not related by ambient isometries (the above computations yield a nontrivial rotation of the traceless shape operators). Explicit examples of non globally congruent twin $H$-surfaces are the horizontal $H$-tubes, see~\cite[Thm.~5.5]{Man24}.

Note also that twin $H$-immersions in $\E(\kappa,\tau)$ are not angular companions in general, because all \textsc{cmc} surfaces have twin immersions but not all of them satisfy~\eqref{eqn:spm:ekt}. For instance, the parabolic helicoids $P_{\kappa,\tau,H}$ with $\kappa<0$, $\tau\neq 0$ and $4H^2+\kappa<0$ (as described in~\cite[Eqn.~(2.7)]{DomMan}) have constant angle $\nu_3^2=\frac{4H^2+\kappa}{\kappa-4\tau^2}$, whence $\zeta=\frac{-2H^2}{\tau}$ is a nonzero constant. Since $T_3$ never vanishes in $P_{\kappa,\tau,H}$, we conclude that parabolic helicoids do not satisfy~\eqref{eqn:spm:ekt}.
\end{example}

We can use this discussion to obtain a complete classification of isometric immersions $\phi,\widetilde\phi$ of a Riemannian surface $\Sigma$ into $\R^3$ or into $\mathbb{S}^3$ whose left-invariant Gauss maps (as maps $g,\widetilde{g}:\Sigma\to\mathbb{S}^2$) differ by an orientation-preserving isometry. This problem was tackled by Abe--Erbacher~\cite{AbeErb75} in $\mathbb{R}^3$, see item (D) of their main theorem. We include in the statement the case of minimal immersions (not explicitly mentioned in~\cite{AbeErb75}) and provide a proof not using elliptic operators. More importantly, we extend their result to the round sphere $\mathbb{S}^3$.

\begin{corollary}\label{coro:uniqueness-angles}
Let $\phi,\widetilde\phi:\Sigma\to G$ be two isometric immersions of an oriented Riemannian surface $\Sigma$ into $G$. Assume that their left-invariant Gauss maps $g,\widetilde g:\Sigma\to\mathbb{S}^2$ satisfy $\widetilde g=\sigma\circ g$ for some orientation preserving $\sigma\in\Iso_+(\mathbb{S}^2)$.
\begin{enumerate}[label=\emph{(\alph*)}]
	\item If $G=\mathbb{R}^3$, then $\phi$ and $\widetilde\phi$ are either congruent by any ambient isometry or they are minimal immersions lying in the same Bonnet associate family.
	\item If $G=\mathbb{S}^3$, then $\phi$ and $\widetilde\phi$ are either congruent by an orientation-preserving ambient isometry or they are twin $H$-immersions with opposite \textsc{cmc}.
\end{enumerate}
\end{corollary}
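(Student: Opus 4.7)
The plan is to reduce the problem to classifying angular companions, for which the machinery of Section~\ref{sec:angles} applies directly.

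\textbf{Reduction step.} First I would observe that both $\R^3$ and $\mathbb{S}^3$ have $6$-dimensional isometry groups, so by Remark~\ref{rmk:stabilizer} the orientation-preserving part of the stabilizer at any point acts on the left-invariant frame $\{E_1,E_2,E_3\}$ as the full $\SO(3)$. Hence any $\sigma\in\Iso_+(\mathbb{S}^2)=\SO(3)$ can be realized as the action on the Gauss sphere of some $\Psi\in\Iso_+(G)$, and after replacing $\widetilde\phi$ by $\Psi^{-1}\circ\widetilde\phi$ we may assume $\widetilde\nu_i=\nu_i$, that is, $\phi$ and $\widetilde\phi$ are angular companions. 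Consequently they share the function $\psi=4H^2+\zeta^2$ from Lemma~\ref{lem:Xi}, forcing $\widetilde H^2=H^2$ pointwise and, by continuity, $\widetilde H/H$ locally constant on $\{H\neq 0\}$.

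\textbf{Case $G=\R^3$.} Here $c_\alpha=\mu_\alpha=\zeta=0$ and $\psi=4H^2$, so the argument splits according to whether $H$ vanishes identically. If $H\equiv 0$, both immersions are minimal with the same Gauss map and I would conclude via Example~\ref{ex:bonnet} that they lie in the same Bonnet associate family. Otherwise, by Theorem~\ref{thm:angles} at any point where $H\neq 0$ the only remaining ambiguity is the sign $\widetilde H=\pm H$, globally constant by connectedness. The choice $\widetilde H=H$ gives, via Theorem~\ref{thm:fundamental}, that $\widetilde\phi$ is a left-translate of $\phi$. For $\widetilde H=-H$, the antipodal immersion $\tau\circ\phi$ of Example~\ref{ex:antipodal} shares the same angle functions, has mean curvature $-H$, and has opposite tangent projections (matching $\widetilde T_\alpha=\Rot_\pi(T_\alpha)=-T_\alpha$, which is the limit as $\zeta\to 0$ of $\tan(\theta/2)=-2H/\zeta$). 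Theorem~\ref{thm:fundamental} then yields that $\widetilde\phi$ and $\tau\circ\phi$ agree up to a left-translation, so $\phi$ and $\widetilde\phi$ are congruent by an ambient isometry, establishing (a).

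\textbf{Case $G=\mathbb{S}^3$.} Here $c_1=c_2=c_3=2$ and $\zeta\equiv 2$, so $\psi=4H^2+4>0$ on all of $\Sigma$ and Theorem~\ref{thm:angles} applies unconditionally. Again $\widetilde H=\pm H$ with the sign globally constant. If $\widetilde H=H$, Theorem~\ref{thm:fundamental} shows $\widetilde\phi$ is a left-translate of $\phi$, and left-translations in $\mathbb{S}^3$ are orientation-preserving. If $\widetilde H=-H$, item (d) of Theorem~\ref{thm:angles} requires the companion equation~\eqref{eqn:spm}, which in $\mathbb{S}^3$ reduces via~\eqref{eqn:spm:ekt} to $\nabla H=0$; thus both immersions are \textsc{cmc} with opposite constant mean curvatures, and the computation in Example~\ref{ex:twin} identifies $\widetilde\phi$ with the twin $H$-immersion of $\phi$, obtained by the constant rotation $\theta=-2\arctan(H)$ of the traceless shape operator. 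This proves (b).

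The hard part will be the reduction step: I need to check carefully that the lift $\Psi\in\Iso_+(G)$ realizing a given $\sigma\in\SO(3)$ indeed transforms the left-invariant Gauss map precisely by $\sigma^{-1}$, rather than by some twisted pointwise version. Once that reduction is secured, the rest is a book-keeping task combining Theorem~\ref{thm:angles} with the examples, since the specific values of $c_\alpha$ and $\zeta$ in $\R^3$ and $\mathbb{S}^3$ collapse the companion equation~\eqref{eqn:spm} into very restrictive conditions that precisely single out the Bonnet families and the Lawson twin pairs.
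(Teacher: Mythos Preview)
Your reduction step and the overall strategy match the paper's. For $G=\mathbb{S}^3$ your argument is essentially correct, though ``globally constant by connectedness'' needs one more sentence: on any component $U$ of $\{H\neq 0\}$ where $\widetilde H=-H$, item (d) of Theorem~\ref{thm:angles} forces $\nabla H=0$, so $H$ is a nonzero constant on $U$; continuity then makes $U$ closed, hence clopen, hence $U=\Sigma$.

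The genuine gap is in the $\R^3$ case. Your dichotomy ``$H\equiv 0$'' versus ``otherwise'' ignores the mixed situation where $\{H=0\}$ has nonempty interior but $H\not\equiv 0$. On $\{H\neq 0\}$ you do get $\widetilde T_\alpha=\pm T_\alpha$, but that set may be disconnected, and on the interior of $\{H=0\}$ the Bonnet analysis only gives $\widetilde T_\alpha=\Rot_{\theta_0}(T_\alpha)$ for some a priori unrelated constant $\theta_0$. Crucially, in $\R^3$ item (d) is vacuous (with $\zeta=0$ and all $\mu_\alpha=0$, Equation~\eqref{eqn:spm} is $0=0$), so you cannot force $H$ constant as in the sphere and the clopen trick above is unavailable. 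Nothing in your argument glues the regions $\{H\neq 0\}$ and $\operatorname{int}\{H=0\}$ together, so ``globally constant by connectedness'' is unjustified and Theorem~\ref{thm:fundamental} cannot be invoked on all of $\Sigma$.

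The paper closes this gap by working directly with the rotation function $\theta$ of Theorem~\ref{thm:angles}(c). Since both immersions are given, $\widetilde T_\alpha$ and $T_\alpha$ are smooth on $\Sigma$ and span the tangent plane, so $\theta$ is continuous on all of $\Sigma$, including across $\{\psi=0\}$ (where the Bonnet description of Example~\ref{ex:bonnet} shows $\theta$ is constant on each component of the interior). The paper then proves $\theta$ is locally constant: near any $p$ with $\theta(p)\notin\{0,\pi\}$ one shows the immersions are either twin (in $\mathbb{S}^3$) or Bonnet-associate (in $\R^3$), and in both cases $\theta$ is constant; a boundary argument makes that component clopen. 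Once $\theta$ is a single global constant, the three values $\theta=0$, $\theta=\pi$, and $\theta\notin\{0,\pi\}$ yield precisely your three conclusions (left-translate, antipodal, Bonnet/twin).
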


\begin{proof}
Observe that $\sigma$ can be thought of as an element of $\SO(3)$ by viewing $\mathbb{S}^2\subset\R^3$, so it acts in the Lie algebra $\mathfrak{g}$ when we identify the usual basis of $\R^3$ with the left invariant frame $\{E_1,E_2,E_3\}$ of $G$. This yields an explicit isomorphism between $\Iso_+(\mathbb{S}^2)$ and the orientation-preserving subgroup of $\Stab_p$ for any $p\in G$ (recall that isometries of $G$ preserve left-invariant vector fields). This means that we can change $\phi$ or $\widetilde\phi$ by a orientation-preserving isometry of $G$ to assume that $\widetilde g=g$, i.e., both immersions can be assumed to share their angle functions $\nu_1,\nu_2,\nu_3$. 

Item (c) of Theorem~\ref{thm:angles} gives a function $\theta\in\mathcal C^\infty(\Sigma)\pmod{2\pi}$ such that $\widetilde T_\alpha=\Rot_\theta(T_\alpha)$. To be precise, this result applies to points with $\psi\neq 0$ but extends by continuity to points with $\psi=0$ in $\R^3$ (notice that, if the set of such points have non-empty interior, then $\phi$ and $\widetilde{\phi}$ are associate in this interior by Example~\ref{ex:bonnet}, where $\theta$ must be constant). Indeed, we claim that $\theta$ is locally constant (and hence constant) in $\Sigma$. For, assume that $\theta(p)\not\in\{0,\pi\}\pmod{2\pi}$ for some $p\in\Sigma$, which implies by~\eqref{eqn:gaussdata:eqn3} that either $\psi,H\neq 0$ or $\psi\equiv0$ in a neighborhood $U$ of $p$. This last case follows by contradiction: if $\psi(p)=0$ and there is a sequence $\{p_n\}\to p$ with $\psi(p_n)=4H(p_n)^2\neq 0$, then $\theta(p_n)=\pi\pmod{2\pi}$ by~\eqref{eqn:gaussdata:eqn3} (notice that $\zeta=0$ because this situation can only happen if $G=\R^3$), and hence $\theta(p)=\pi\pmod{2\pi}$ in the limit, which is absurd. All in all, if $\theta(p)\not\in\{0,\pi\}\pmod{2\pi}$, the restrictions $\phi|_U$ and $\widetilde\phi|_U$ are either associate immersions in $\R^3$ or twin immersions in $\mathbb{S}^3$, and either way $\theta$ is constant in $U$ so the claim is true. 

The same argument also applies globally: if $\theta\not\in\{0,\pi\}\pmod{2\pi}$, then either $\psi$ has no zeros (and we get twin $H$-immersions in $\mathbb{S}^3$) or $\psi$ identically vanishes (and we get associate immersions in $\R^3$). If $\theta=0\pmod{2\pi}$, then $\phi$ and $\widetilde\phi$ coincide up to a left-translation by Theorem~\ref{thm:fundamental}. Finally, if $\theta=\pi\pmod{2\pi}$ then $\zeta\equiv 0$ again by~\eqref{eqn:gaussdata:eqn3} and we can recover the immersion from the antipodal map as in Example~\ref{ex:antipodal} (only in this case, which only occurs if $G=\mathbb{R}^3$, the ambient isometry that relates $\phi$ and $\widetilde\phi$ is orientation-reversing).
\end{proof}

Note that a similar argument also works if $\Sigma$ is spacelike into a Minkowski space $\mathbb{L}^3$ or the universal cover of the anti-de Sitter space $\mathbb{H}^3_1$, where there is also a notion of associate family and a Lawson-type correspondence given by Palmer~\cite{Palmer90}.

\subsection{Constant angles and totally geodesic surfaces}\label{sec:Hzeta0}

In our previous discussion, the case $\psi=0$ seems to be problematic since it means that either $X_1=X_2=X_3=0$ (if $H=\zeta=0$) or $X_1,X_2,X_3$ are all lightlike (if $4H^2=\zeta^2$ in the Lorentzian timelike case $\epsilon_1\epsilon_2=-1$). In this section, we will focus only on the first case $H=\zeta=0$, which hides a geometrically appealing family of examples.

\begin{proposition}\label{prop:Hzeta0}
Let $\Sigma$ be a Riemannian or Lorentzian surface isometrically immersed in a unimodular metric Lie group of diagonalizable type $G$ with not constant sectional curvature (see also Remark~\ref{rmk:Hzeta0}). The following are equivalent:
\begin{enumerate}[label=\emph{(\roman*)}]
	\item The immersion has zero mean curvature and satisfies $\zeta=0$.
	\item The immersion has constant angle functions.
	\item The immersion lies in a left coset of a $2$-dimensional Lie subgroup of $G$.
\end{enumerate}
Moreover, given $\nu_1,\nu_2,\nu_3\in\R$ such that $\sum_{\alpha=1}^3\epsilon_\alpha\nu_\alpha^2=\epsilon_3$ and $\sum_{\alpha=1}^3c_\alpha\nu_\alpha^2=0$, there is a complete surface (unique up to left-translation) with constant angles $\nu_1,\nu_2,\nu_3$.
\end{proposition}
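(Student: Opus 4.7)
The plan is to establish the three implications (iii)$\Rightarrow$(ii), (ii)$\Rightarrow$(i)\,\&\,(iii), and (i)$\Rightarrow$(ii) separately, together with the existence statement, leveraging Proposition~\ref{prop:compatibility}, Theorem~\ref{thm:fundamental}, and the algebraic identity~\eqref{eqn:gaussdata:eqn1}.

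The first implication is immediate: if $\phi(\Sigma)\subset gH$ for a $2$-dimensional Lie subgroup $H\subset G$, the tangent plane at every point is a left-translate of $\mathfrak h=\mathrm{Lie}(H)$, so $N$ is left-invariant along $\phi(\Sigma)$ and the angles $\nu_\alpha$ are constant. For (ii)$\Rightarrow$(i), inserting $\nabla\nu_\alpha=0$ into~\ref{eqn:comp-v} yields $ST_1=\epsilon_2\epsilon_3(\mu_3\nu_2 T_3-\mu_2\nu_3 T_2)$ and cyclic permutations; pairing with $T_\alpha$ via~\ref{eqn:comp-iii} gives $\langle ST_\alpha,T_\alpha\rangle=-\hat\epsilon_3\epsilon_\beta\epsilon_\gamma(\mu_\gamma-\mu_\beta)\nu_1\nu_2\nu_3$, whose cyclic sum telescopes to zero, producing $\trace S=0$, i.e., $H=0$; the self-adjointness clause of Theorem~\ref{thm:fundamental} then reduces to $\epsilon_1\epsilon_2\epsilon_3\zeta=0$, so $\zeta=0$. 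From (ii), and hence $\zeta=0$, I would obtain (iii) via the bracket identity
\[\langle[V,W],N\rangle=\det\begin{pmatrix}c_1\nu_1&c_2\nu_2&c_3\nu_3\\v_1&v_2&v_3\\w_1&w_2&w_3\end{pmatrix}\]
derived from~\eqref{eqn:lie-bracket-unimodular-frame}: it vanishes on $\mathfrak h\times\mathfrak h$, with $\mathfrak h=N^\perp$, precisely when $\zeta=0$, so $\mathfrak h$ is a $2$-dim Lie subalgebra of $\mathfrak g$; Lie's third theorem integrates it to a unique connected Lie subgroup $H\subset G$, and $\phi(\Sigma)$ is an integral leaf of the associated left-invariant distribution, hence contained in a left coset of $H$.

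The hardest step is (i)$\Rightarrow$(ii). With $H=0$ and $\zeta=0$, identity~\eqref{eqn:gaussdata:eqn1} forces $X_\alpha=0$ for all $\alpha$, i.e., $J\nabla\nu_\alpha=\epsilon_\beta\epsilon_\gamma(\nu_\beta\nabla\nu_\gamma-\nu_\gamma\nabla\nu_\beta)$ for cyclic $(\alpha,\beta,\gamma)$, and differentiating $\zeta\equiv 0$ on $\Sigma$ provides the extra tangential identity $\sum_\alpha c_\alpha\nu_\alpha\nabla\nu_\alpha=0$. Writing $\nabla\nu_\alpha=w_\alpha e_1+z_\alpha e_2$ in a positive orthonormal frame, the equations $X_2=X_3=0$ solve $(w_2,z_2,w_3,z_3)$ linearly in $(w_1,z_1)$ at points where $\langle T_1,T_1\rangle\neq 0$; substituting into the two scalar components of $\sum c_\alpha\nu_\alpha\nabla\nu_\alpha=0$ and absorbing $c_1\nu_1^2=-c_2\nu_2^2-c_3\nu_3^2$ produces a homogeneous $2\times 2$ system on $(w_1,z_1)$ whose determinant equals, up to a nonzero factor,
\[D_1=c_1^2\nu_1^2+\nu_2^2\nu_3^2(c_2-c_3)^2,\]
with analogous cyclic counterparts $D_2,D_3$. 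Wherever at least one $D_\alpha\neq 0$, the system forces $\nabla\nu_\alpha=0$, and continuity then propagates this conclusion to all of $\Sigma$ provided the locus $\{D_1=D_2=D_3=0\}$ has empty interior. The main obstacle I anticipate is the degenerate regime in which this locus is open (e.g., $\Nil_3$ with $\nu_3\equiv 0$, or permutations), where the linear algebra fails and one must close the argument directly, using that $T$ in~\eqref{eqn:T} collapses, that Codazzi~\ref{eqn:comp-ii} reduces to $\nabla_X SY=\nabla_Y SX+S[X,Y]$, and that $\Sigma$ becomes foliated by integral curves of a left-invariant vector field tangent to it, along which $H=0$ rigidifies the remaining angles.

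For the existence of the complete model surface, given constants $(\nu_\alpha)$ satisfying $\sum_\alpha\epsilon_\alpha\nu_\alpha^2=\epsilon_3$ and $\zeta=0$, the same bracket identity makes $\mathfrak h=\{\sum_\alpha a_\alpha E_\alpha:\sum_\alpha a_\alpha\nu_\alpha=0\}$ a $2$-dim Lie subalgebra of $\mathfrak g$; Lie's third theorem delivers a unique connected Lie subgroup $H\subset G$ integrating it, and any left coset $gH$ provides a complete surface with the prescribed constant angles by left-invariance of the metric, unique up to left-translation in $G$.
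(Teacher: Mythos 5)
Your implications (iii)$\Rightarrow$(ii), (ii)$\Rightarrow$(i) and (ii)$\Rightarrow$(iii), as well as the existence statement, are correct, and the routes are sound even where they differ from the paper: the paper gets $\zeta=0$ from the integrability of the distribution $N^\perp$ and $H=0$ from $\operatorname{div}(N)=0$, whereas you get $H=0$ from the telescoping trace $\sum_\alpha\epsilon_\alpha\langle ST_\alpha,T_\alpha\rangle$ and $\zeta=0$ from the self-adjointness clause of Theorem~\ref{thm:fundamental}; your bracket-determinant identity for the subalgebra criterion is exactly the mechanism behind the integrability computation in the paper.

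The genuine gap is in (i)$\Rightarrow$(ii). Your reduction to the $2\times2$ system with determinant $D_1=c_1^2\nu_1^2+(c_2-c_3)^2\nu_2^2\nu_3^2$ is fine (in the Riemannian case), but the degenerate locus $\{D_1=D_2=D_3=0\}$ is \emph{not} an exceptional set that can be dismissed: for $\mathrm{Nil}_3$ ($c_1=c_2=0\neq c_3$, which does not have constant curvature and is therefore within the scope of the proposition) the constraint $\zeta=0$ forces $\nu_3\equiv 0$ and then $D_1,D_2,D_3$ vanish identically on every such surface, so your linear algebra yields no information anywhere. You acknowledge this but only sketch a workaround ("foliated by integral curves \dots along which $H=0$ rigidifies the remaining angles") that is not carried out and whose mechanism is unclear; as written, the implication is unproved precisely in a nonvacuous case. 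In fact the fix is much simpler than what you propose: with $\nu_3\equiv 0$ the relations $X_1=X_2=0$ from~\eqref{eqn:Xi} collapse to $J\nabla\nu_1=J\nabla\nu_2=0$, so the remaining angles are constant with no further work. The paper avoids this case split by keeping all three gradients as unknowns of a single $3\times3$ homogeneous system (the two differentiated constraints plus the relation obtained from $\sum_\alpha c_\alpha\nu_\alpha J\nabla\nu_\alpha=0$), whose singularity forces a quadric condition on $(\nu_1^2,\nu_2^2,\nu_3^2)$; intersecting it with the line cut out by the two algebraic constraints shows the angles are constant unless $G$ has constant curvature. Finally, note that your $D_\alpha$ are sums of squares only in the Riemannian case; in the Lorentzian case the analogous determinants carry signs $\epsilon_i$ and can vanish on much larger sets, so the "wherever some $D_\alpha\neq0$" step would need to be redone with the signed determinant of the full system, as in~\eqref{prop:Hzeta0:eqn3}.
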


\begin{proof}
Assume that item (i) holds true. By taking gradients in $\sum_{\alpha=1}^3\epsilon_\alpha\nu_\alpha^2=\epsilon_3$ and $\sum_{\alpha=1}^3c_\alpha\nu_\alpha^2=0$, we get to $\sum_{\alpha=1}^3\epsilon_\alpha\nu_\alpha\nabla\nu_\alpha=0$ and $\sum_{\alpha=1}^3c_\alpha\nu_\alpha\nabla\nu_\alpha=0$, respectively. The latter yields $\sum_{\alpha=1}^3c_\alpha\nu_\alpha J\nabla\nu_\alpha=0$ by just applying $J$. Since we are assuming $H=\zeta=0$, it follows from~\eqref{eqn:gaussdata:eqn1} that $X_\alpha=0$, whence~\eqref{eqn:Xi} gives
	\begin{equation}\label{prop:Hzeta0:eqn1}
	\begin{aligned}
		J\nabla\nu_1&=\epsilon_2\epsilon_3(\nu_2\nabla\nu_3-\nu_3\nabla\nu_2),\\
		J\nabla\nu_2&=\epsilon_3\epsilon_1(\nu_3\nabla\nu_1-\nu_1\nabla\nu_3),\\
		J\nabla\nu_3&=\epsilon_1\epsilon_2(\nu_1\nabla\nu_2-\nu_2\nabla\nu_1).
	\end{aligned}
	\end{equation}
 Expanding $\sum_{\alpha=1}^3c_\alpha\nu_\alpha J\nabla\nu_\alpha=0$ by means of~\eqref{prop:Hzeta0:eqn1}, we reach the linear system
	\begin{equation}\label{prop:Hzeta0:eqn2}
	\left.\begin{array}{r}
		\epsilon_1\nu_1\nabla\nu_1+\epsilon_2\nu_2\nabla\nu_2+\epsilon_3\nu_3\nabla\nu_3=0\\
		c_1\nu_1\nabla\nu_1+c_2\nu_2\nabla\nu_2+c_3\nu_3\nabla\nu_3=0\\
		(\epsilon_2c_2-\epsilon_3c_3)\nu_2\nu_3\nabla\nu_1+(\epsilon_3c_3-\epsilon_1c_1)\nu_3\nu_1\nabla\nu_2+(\epsilon_1c_1-\epsilon_2c_2)\nu_1\nu_2\nabla\nu_3=0
	\end{array}\right\}.
	\end{equation}
	Assume not all angle functions are constant, in which case we will work in an open subset of $\Sigma$ in which some of the gradients $\nabla\nu_1,\nabla\nu_2,\nabla\nu_3$ never vanish. This implies that the coefficient matrix of~\eqref{prop:Hzeta0:eqn2} is singular, which amounts to
	\begin{align}
	0&=-\epsilon_1\epsilon_2\epsilon_3\det\left(\begin{matrix}
		\epsilon_1\nu_1&\epsilon_2\nu_2&\epsilon_3\nu_3\\
		c_1\nu_1&c_2\nu_2&c_3\nu_3\\
		(\epsilon_2c_2-\epsilon_3c_3)\nu_2\nu_3&(\epsilon_3c_3-\epsilon_1c_1)\nu_3\nu_1&(\epsilon_1c_1-\epsilon_2c_2)\nu_1\nu_2
	\end{matrix}\right)\notag\\
	&=\epsilon_1(\epsilon_2c_2-\epsilon_3c_3)^2\nu_2^2\nu_3^2+\epsilon_2(\epsilon_3c_3-\epsilon_1c_1)^2\nu_3^2\nu_1^2+\epsilon_3(\epsilon_1c_1-\epsilon_2c_2)^2\nu_1^2\nu_2^2.\label{prop:Hzeta0:eqn3}
	\end{align}
	We will distinguish cases according to how many distinct $\epsilon_1c_1,\epsilon_2c_2,\epsilon_3c_3$ we have. 
	\begin{enumerate}
		\item Assume first $\epsilon_1c_1,\epsilon_2c_2,\epsilon_3c_3$ are all distinct. We can think of~\eqref{prop:Hzeta0:eqn3} as a quadric $Q\subset\R^3$ in the unknowns $x=\nu_1^2,y=\nu_2^2,z=\nu_3^2$, and we can think of the intersection of $\sum_{\alpha=1}^3\epsilon_\alpha\nu_\alpha^2=\epsilon_3$ and $\sum_{\alpha=1}^3c_\alpha\nu_\alpha^2=0$ as a line $L\subset\R^3$. Since $Q$ is the union of lines through the origin and $L$ does not contain the origin, either $L\cap Q$ consists of finitely many points or there is an affine plane through the origin contained in $Q$, i.e., the quadratic polyonomial in~\eqref{prop:Hzeta0:eqn3} factors as a product of linear polynomials. This is impossible since all coefficients of $xy,yz,zx$ are nonzero and there are no square terms $x^2,y^2,z^2$. Thus, $L\cap Q$ is finite and all angle functions are constant.

		\item Assume now $\epsilon_1c_1=\epsilon_2c_2\neq\epsilon_3c_3$ up to relabeling indexes, so $\sum_{\alpha=1}^3c_\alpha\nu_\alpha^2=0$ reads $\epsilon_1\epsilon_3c_1-\epsilon_3(\epsilon_1c_1-\epsilon_3c_3)\nu_3^2$. Since $\epsilon_1c_1\neq\epsilon_3c_3$, we have that $\nu_3$ is constant. If $\epsilon_1c_1=\epsilon_2c_2=0$, then $\nu_3=0$ and~\eqref{prop:Hzeta0:eqn1} gives $\nabla\nu_1=\nabla\nu_2=0$, whence the angles are all constant. Otherwise, we have $\epsilon_1c_1=\epsilon_2c_2\neq 0$ so that $\nu_3\neq 0$ and~\eqref{prop:Hzeta0:eqn3} now reduces to $\epsilon_1\nu_1^2+\epsilon_2\nu_2^2=0$, and hence $\nu_3^2=1$. Therefore, the above equation $\epsilon\epsilon_1c_1-\epsilon_3(\epsilon_1c_1-\epsilon_3c_3)\nu_3^2$ yields $c_3=0$, and we get a space of constant curvature according to Proposition~\ref{prop:dim-iso-4-6}.

		\item If $\epsilon_1c_1=\epsilon_2c_2=\epsilon_3c_3$, then $0=\sum_{\alpha=1}^3c_\alpha\nu_\alpha^2=\epsilon_1c_1\sum_{\alpha=1}^3\epsilon_\alpha\nu_\alpha^2=\epsilon_1\epsilon_3 c_1$, that is, $c_1=c_2=c_3=0$ and we get a zero mean curvature immersion in $\R^3$ or $\mathbb{L}^3$.
	\end{enumerate}
	All in all, we have shown that the immersion has constant angle functions unless possibly if the space has constant sectional curvature. 

	Conversely, suppose that the immersion has constant angles $\nu_1,\nu_2,\nu_3\in\mathbb{R}$, so the normal reads $N=\sum_{\alpha=1}^3\epsilon_\alpha\nu_\alpha E_\alpha$. Assume first that $\nu_3\neq 0$, we can consider $Y_1=\nu_3E_1-\nu_1E_3$ and $Y_2=\nu_3 E_2-\nu_2E_3$, which form a tangent frame in $\Sigma$ and satisfy $\langle[Y_1,Y_2],N\rangle=\nu_3\zeta$. As $\nu_3\neq 0$, we deduce that the distribution $\Delta=\operatorname{span}\{Y_1,Y_2\}$ (orthogonal to $N$) is integrable if and only if $\zeta=0$. If, on the contrary, we have $\nu_3=0$, the discussion is similar by defining $Y_1=\nu_2E_1-\nu_1E_2$ and $Y_2=E_3$ instead. Either way, since $N$ is a unit normal for the foliation consisting of the integral surfaces of $\Delta$ and $\operatorname{div}(N)=0$ (note that~\eqref{eqn:nabla-unimodular} implies that $\operatorname{div}(E_\alpha)=0$ for all $\alpha\in\{1,2,3\}$), we conclude that $\Sigma$ is minimal. This argument also proves the last assertion in the statement. Note also that (ii) and (iii) are known to be equivalent in general (e.g., see~\cite[Lem.~3.9]{MeeksPerez12}).
\end{proof}

\begin{remark}\label{rmk:Hzeta0}
If $G$ has constant sectional curvature, the classification of surfaces with $H=\zeta=0$ also follows from the proof of Proposition~\ref{prop:Hzeta0}; namely, we have:
\begin{itemize}
	\item any surface with zero mean curvature in $\mathbb{R}^3$ or $\mathbb{L}^3$;
	\item the integral surfaces of the distribution $\operatorname{span}\{E_1,E_2\}$ in $\widetilde{\mathrm{E}}(2)$ with a Riemannian or Lorentzian flat metric ($c_1=c_2\neq c_3=0$, $\epsilon_1=\epsilon_2$); and
	\item any timelike surface in $\mathrm{Sol}_3$ with a Lorentzian flat metric ($c_1=-c_2\neq c_3=0$, $\epsilon_1=-\epsilon_2$) satisfying $\nu_1^2=\nu_2^2$. 
\end{itemize}
In the case of $4$-dimensional isometry group, the classification also follows easily. For instance, in the Riemannian case $\epsilon_1=\epsilon_2=\epsilon_3=1$, we encounter vertical planes in $\mathrm{Nil}_3$ ($c_1=c_2=0\neq c_3$) and minimal parabolic helicoids $P_{\kappa,\tau,0}$ in $\mathbb{E}(\kappa,\tau)$ with $\kappa<0$ and $\tau\neq 0$ (these surfaces are described in~\cite{DomMan}, see Example~\ref{ex:twin}). This result can be also easily deduced from the classification of \textsc{cmc} surfaces in $\mathbb{E(\kappa,\tau)}$ with constant angle $\nu_3$ obtained by Espinar--Rosenberg~\cite{ER} (see also~\cite{DomMan}). 

We also remark that, in the Riemannian case, a classification of 2-dimensional Lie subgroups of $G$ can be found in~\cite{MeeksPerez12}. In the case $\dim(\Iso(G))=3$, the connected groups of isometries of $G$ that act on $G$ with orbits of codimension one (namely, giving rise to extrinsically homogeneous surfaces) are nothing but the $2$-dimensional Lie subgroups of $G$. The classification of the corresponding cohomogeneity one actions up to orbit equivalence can be found in~\cite[Tab.~1]{DFT25}, from where the classification of homogeneous surfaces up to congruence follows.
\end{remark}

The family of surfaces with constant angles turns out to contain all totally geodesic surfaces in our unimodular metric Lie groups. This was proved by Tsukada~\cite{Tsukada} in the Riemannian case (see also~\cite[Prop.~3.4]{MS}). Here, we will provide a different proof of this result that also applies to the Lorentzian case.

\begin{theorem}\label{thm:tot-geodesic}
Let $\Sigma$ be a Riemannian or Lorentzian surface isometrically immersed in a unimodular metric Lie group $G$ of diagonalizable type with structure constants $c_1,c_2,c_3$ and signs $\epsilon_1,\epsilon_2,\epsilon_3$, and assume $G$ has not constant sectional curvature. Up to relabeling indexes, the immersion is totally geodesic if and only if $\epsilon_3c_3=\epsilon_1c_1+\epsilon_2c_2$, with $c_2>0$ and $c_1<0$, and $\Sigma$ is (an open subset of) an integral surface of one of the distributions $\{\sqrt{-c_1}E_1+\sqrt{c_2}E_2,E_3\}$ or $\{\sqrt{-c_1}E_1-\sqrt{c_2}E_2,E_3\}$.
\end{theorem}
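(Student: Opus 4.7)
The proof has two directions. For $(\Leftarrow)$, assume $\epsilon_3 c_3 = \epsilon_1 c_1 + \epsilon_2 c_2$ with $c_1<0<c_2$, so that $\mu_3 = 0$, $\mu_1 = \epsilon_2 c_2$ and $\mu_2 = \epsilon_1 c_1$. Setting $V_\pm = \sqrt{-c_1}\,E_1 \pm \sqrt{c_2}\,E_2$, a direct computation using~\eqref{eqn:lie-bracket-unimodular-frame} gives $[V_\pm, E_3] = \mp\sqrt{-c_1 c_2}\,V_\pm$, so $\mathrm{span}\{V_\pm, E_3\}$ is involutive by Frobenius. Substituting $\mu_1, \mu_2, \mu_3$ into~\eqref{eqn:nabla-unimodular} shows that $\overline\nabla_{V_\pm} V_\pm$ is a scalar multiple of $E_3$, $\overline\nabla_{V_\pm} E_3 = \mp\sqrt{-c_1 c_2}\,V_\pm$, and $\overline\nabla_{E_3} V_\pm = \overline\nabla_{E_3} E_3 = 0$ (using $\mu_3 = 0$). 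Hence $\overline\nabla$ preserves the distribution, so its integral surfaces are totally geodesic.

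For $(\Rightarrow)$, assume $S \equiv 0$. Codazzi's equation~\ref{eqn:comp-ii} reduces to $T \equiv 0$ on $\Sigma$, which by~\eqref{eqn:T} reads $\sum_\alpha \epsilon_\alpha \mu_\beta \mu_\gamma \nu_\alpha T_\alpha = 0$ (with $\{\alpha,\beta,\gamma\}=\{1,2,3\}$). Combined with $\sum_\alpha \epsilon_\alpha \nu_\alpha T_\alpha = 0$ from~\eqref{eqn:algebraic-relations}, we have two linear relations among $T_1, T_2, T_3 \in T_p\Sigma$. If their coefficient vectors in $\R^3$ are linearly independent, then $T_1, T_2, T_3$ span at most a $1$-dimensional subspace of $T_p\Sigma$, forcing the basis $\{E_1, E_2, E_3\}$ of $T_pG$ into a $2$-plane, a contradiction. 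So the two vectors are proportional. If all $\nu_\alpha \neq 0$, proportionality gives $\mu_1\mu_2 = \mu_1\mu_3 = \mu_2\mu_3$, which by the proof of Proposition~\ref{prop:dim-iso-4-6} forces constant sectional curvature, excluded by hypothesis. Up to relabeling, therefore, $\nu_3 = 0$, and proportionality now reduces to $\mu_3(\mu_1 - \mu_2) = 0$.

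Suppose first $\mu_3 \neq 0$ and $\mu_1 = \mu_2 =: \mu$. If $\mu = 0$, then $\mu_1=\mu_2=0$ while $\mu_3\neq 0$, which is again the constant curvature case of Proposition~\ref{prop:dim-iso-4-6}, excluded; so $\mu \neq 0$. Substituting $S = 0$ and $\nabla\nu_3 = 0$ (since $\nu_3 \equiv 0$) into~\ref{eqn:comp-v} yields $\mu(\nu_1 T_2 - \nu_2 T_1) = 0$, which combined with $\epsilon_1\nu_1 T_1 + \epsilon_2 \nu_2 T_2 = 0$ gives $T_1 = T_2 = 0$; but then $E_1$ and $E_2$ would both be parallel to $N$, contradicting their linear independence as orthonormal frame vectors. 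Hence $\mu_3 = 0$, so $\mu_1 = \epsilon_2 c_2$ and $\mu_2 = \epsilon_1 c_1$. Under this, the equation $\nabla\nu_3 = 0$ in~\ref{eqn:comp-v} now reads $\epsilon_2 c_1 \nu_1 T_2 - \epsilon_1 c_2 \nu_2 T_1 = 0$, which combined with the algebraic relation yields $c_1 \nu_1^2 + c_2 \nu_2^2 = 0$ (in particular $\zeta = 0$). Together with $\epsilon_1 \nu_1^2 + \epsilon_2 \nu_2^2 = \epsilon_3$, this forces $c_1 c_2 < 0$; after relabeling so that $c_1 < 0 < c_2$, the tangent direction $\nu_2 E_1 - \nu_1 E_2$ is (up to scaling) $V_+$ or $V_-$ according to the signs of $\nu_1, \nu_2$, as one checks by comparing $\nu_2^2/\nu_1^2 = -c_1/c_2$. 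The main obstacle is the case analysis, particularly ruling out the subcase $\mu_1 = \mu_2 \neq 0$, which cannot be eliminated using only Codazzi and the algebraic relation and requires invoking the compatibility equation~\ref{eqn:comp-v}.
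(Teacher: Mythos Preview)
Your approach is essentially the same as the paper's: both use Codazzi to force $T\equiv 0$, combine this with $\sum\epsilon_\alpha\nu_\alpha T_\alpha=0$ to constrain the angles, and then invoke the compatibility equation~\ref{eqn:comp-v} for $\nabla\nu_3$ to finish the case analysis. Your $(\Leftarrow)$ direction is in fact more explicit than the paper's, which only appeals to Proposition~\ref{prop:Hzeta0} for integrability and leaves the total geodesy implicit.

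There is, however, a genuine gap in your case analysis. After concluding that some $\nu_\alpha$ must vanish at every point and relabeling so that $\nu_3\equiv 0$ on an open set, you assert that ``proportionality now reduces to $\mu_3(\mu_1-\mu_2)=0$''. But the proportionality condition at a point with $\nu_3=0$ is actually $\mu_3(\mu_1-\mu_2)\,\nu_1\nu_2=0$, so your conclusion requires $\nu_1\nu_2\neq 0$ somewhere on that open set. You never rule out the subcase where \emph{two} angle functions vanish identically on an open subset---say $\nu_1=\nu_2\equiv 0$, forcing $\nu_3=\pm 1$. The paper treats this separately: here $T_1=E_1$ and $T_2=E_2$ are nonzero, and the equations $\nabla\nu_1=0$, $\nabla\nu_2=0$ in~\ref{eqn:comp-v} (with $S=0$) give $\mu_1 T_1=\mu_2 T_2=0$, hence $\mu_1=\mu_2=0$, which lands back in the constant-curvature case of Proposition~\ref{prop:dim-iso-4-6}. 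You need to insert this short argument before you can legitimately assume $\nu_1,\nu_2\neq 0$ and proceed to $\mu_3(\mu_1-\mu_2)=0$.
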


\begin{proof}
If the immersion is totally geodesic, Codazzi equation~\ref{eqn:comp-ii} implies that the vector field $T$ given by~\eqref{eqn:T} is identically zero, or equivalently 
\begin{equation}\label{thm:tot-geodesic:eqn1}
\epsilon_1\mu_2\mu_3\nu_1T_1+\epsilon_2\mu_1\mu_3\nu_2 T_2+\epsilon_3\mu_1\mu_2\nu_3T_3=0.
\end{equation}
The inner products of~\eqref{thm:tot-geodesic:eqn1} and $T_1,T_2,T_3$ can be computed by means of the algebraic relations~\ref{eqn:comp-iii} and yields the following equations:
\begin{equation}\label{thm:tot-geodesic:eqn2}
 	(\mu_2\mu_3-\varrho)\nu_1=(\mu_3\mu_1-\varrho)\nu_2=(\mu_1\mu_2-\varrho)\nu_3=0,
\end{equation}
where $\varrho=(\epsilon_1\mu_2\mu_3\nu_1^2+\epsilon_2\mu_3\mu_1\nu_2^2+\epsilon_3\mu_1\mu_2\nu_3^2)\epsilon_3$. If there are points where no angle function is zero, we get $\mu_2\mu_3=\mu_3\mu_1=\mu_1\mu_2=\varrho$, so the space has constant sectional curvature by Proposition~\ref{prop:dim-iso-4-6}, which is discarded in the statement. Moreover, if two of the angle functions vanish on (an open subset of) $\Sigma$, namely $\nu_1=\nu_2=0$ with no loss of generality, we get $\nu_3=\pm 1$, whence $\Sigma$ is an integral surface of the distribution $\operatorname{span}\{E_1,E_2\}$, which is integrable if and only if $c_3=0$. From the compatibility equations~\ref{eqn:comp-v}, we get $\mu_2\nu_3 T_2=\mu_3\nu_2 T_3$ and $\mu_1\nu_3 T_1=\mu_3\nu_1 T_3$, and this implies $\mu_1T_1=\mu_2T_2=0$. As $T_1$ and $T_2$ do not vanish, it must be $\mu_1=\mu_2=0$, i.e., $\epsilon_1c_1=\epsilon_2c_2$ and the sectional curvature is constant again by Proposition~\ref{prop:dim-iso-4-6}.

We are left with the case just one of the angle functions vanish in (an open subset of) $\Sigma$, namely $\nu_3=0$ and $\nu_1,\nu_2\neq 0$ with no loss of generality. Equation~\eqref{thm:tot-geodesic:eqn2} tells us that $\varrho=\mu_1\mu_3=\mu_3\mu_2$, whence $\mu_1=\mu_2$ or $\mu_3=0$.	Since $\nabla\nu_3=0$ and $S\equiv 0$, the compatibility equation~\ref{eqn:comp-v} guarantees that $\mu_2\nu_1 T_2=\mu_1\nu_2 T_1$, so that $\eta=\mu_2\nu_1E_2-\mu_1\nu_2E_1$ is normal to the immersion. In particular, $\eta$ must be collinear with $N=\epsilon_1\nu_1 E_1+\epsilon_2\nu_2 E_2$, which amounts to $\epsilon_1\mu_2\nu_1^2+\epsilon_2\mu_1\nu_2^2=0$. This last equation along with $\epsilon_1\nu_1^2+\epsilon_2\nu_2^2=\epsilon_3$ form a linear system in the unknowns $\{\nu_1^2,\nu_2^2\}$, which is compatible if and only if $\mu_1\neq\mu_2$, in which case has a unique solution. This rules out the case $\mu_1=\mu_2$, whence it must be $\mu_3=0$ (i.e., $\epsilon_3c_3=\epsilon_1c_1+\epsilon_2c_2$). The aforesaid unique solution gives constant angles
\begin{equation}\label{thm:tot-geodesic:eqn3}
\nu_1^2=\frac{\epsilon_1\mu_1}{\mu_1-\mu_2}=\frac{c_2}{\epsilon_1c_2-\epsilon_2c_1},\quad \nu_2^2=\frac{-\epsilon_2\mu_2}{\mu_1-\mu_2}=\frac{-c_1}{\epsilon_1c_2-\epsilon_2c_1},\quad \nu_3=0.
\end{equation}
We can swap the indexes $1$ and $2$ to additionally assume that $\epsilon_1c_2-\epsilon_2c_1>0$ (note that $\epsilon_1c_2=\epsilon_2c_1$ if and only if $\epsilon_1c_1=\epsilon_2c_2$ if and only if $\mu_1=\mu_2$). This implies $c_2>0$ and $c_1<0$ (if $c_1=0$ or $c_2=0$, then we get back to a space of constant sectional curvature), and hence the unit normal is given by
	\[N=\epsilon_1\nu_1E_1+\epsilon_2\nu_2E_2=\frac{\epsilon_1\sqrt{c_2}}{\sqrt{\epsilon_1c_2-\epsilon_2c_1}}E_1\pm\frac{\epsilon_2\sqrt{-c_1}}{\sqrt{\epsilon_1c_2-\epsilon_2c_1}}E_2.\]
In other words, $\Sigma$ is contained locally in an integral surface of $\{\sqrt{-c_1}E_1+\sqrt{c_2}E_2,E_3\}$ or $\{\sqrt{-c_1}E_1-\sqrt{c_2}E_2,E_3\}$, both of them being integrable by Proposition~\ref{prop:Hzeta0} since $\sum_{\alpha=1}^3c_\alpha\nu_\alpha^2=0$ trivially holds true in view of~\eqref{thm:tot-geodesic:eqn3}.

Although the argument so far is local, the existence of the totally geodesic surfaces implies that only one $\mu_i$ is zero, so for a given metric Lie group $G$, there are at most two distributions whose integral surfaces provide the totally geodesic examples. Since both distributions are transversal, we cannot glue smoothly two integral surfaces coming from different distributions.
\end{proof}

Notice that the foliations by totally geodesic surfaces spanned by both distributions in Theorem~\ref{thm:tot-geodesic} could be congruent to each other, see~\cite[Rmk.~4.1]{DFT25}.

\section{Bianchi-Cartan-Vr\u{a}nceanu spaces}\label{sec:bcv}

Proposition~\ref{prop:second-integration} shows how a solution of the equation $M^{-1}\df M=\Omega+L(M)$ characterizes the immersion up to left-translations. This raises the natural question of whether or not one can get such a map $M:\Sigma\to\SO^\epsilon_3(\R)$ with less information. By looking at~\eqref{eqn:M}, we can say that Theorem~\ref{thm:fundamental} prescribes the whole of $M$ and Theorem~\ref{thm:angles} prescribes only the last column, whereas Daniel~\cite{Daniel07,Daniel09} in $\E(\kappa,\tau)$-spaces prescribes the last row plus the shape operator.

Observe that, in order to compute the $\mathfrak{so}^\epsilon_3$-symmetric matrix of $1$-forms $\Theta=\Omega+L(M)$ a priori we need the intrinsic forms $\omega^i_j$, with $i,j\in\{1,2\}$, the shape operator $S$ (encoded in $\omega^1_3$ and $\omega^2_3$), and all entries of the matrix $M$. If $\Sigma$ is simply connected, then the equation $M^{-1}\df M=\Theta$ has a solution $M$ if and only if $\Theta$ has zero \emph{Darboux derivative}, i.e., $\df \Theta +\tfrac{1}{2}[\Theta,\Theta] = 0$ (this is a classical result in the theory of moving frames, see~\cite[Ch.~3]{Sharpe97}). In this section, we will prescribe $S$ as a fundamental datum as Daniel does (in contrast with our previous results in Section~\ref{sec:fundamental}), which is the same as prescribing $\Omega$, and additionally we would like to have $L(M)$ without knowing all the information contained in $M$. Next result shows that we are forced to assume $\dim(\Iso(G))\geq 4$.

\begin{proposition}\label{prop:M-LM}
The matrices $M$ and $L(M)$ determine each other algebraically if and only if $\epsilon_1c_1,\epsilon_2c_2,\epsilon_3c_3$ are all distinct, i.e., if and only if $\dim(\Iso(G))=3$.
\end{proposition}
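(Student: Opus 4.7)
The plan is to recognize that $L(M)$ is algebraically equivalent to the symmetric matrix $P(M):=M^t\mathcal{D}M$, where $\mathcal{D}=\operatorname{diag}(\epsilon_1\mu_1,\epsilon_2\mu_2,\epsilon_3\mu_3)$. Indeed, evaluating the three independent upper-triangular entries of formula~\eqref{eqn:L} on the basis vectors $e_1,e_2$, the coefficients of $\omega^1,\omega^2$ reproduce exactly $P_{jk}=\sum_\gamma \epsilon_\gamma \mu_\gamma M^\gamma_j M^\gamma_k$ for all $j\in\{1,2,3\}$ and $k\in\{1,2\}$. By symmetry of $P$ these furnish five of its six independent entries, and the remaining one is recovered algebraically from the trace identity
\[\epsilon_1 P_{11}+\epsilon_2 P_{22}+\epsilon_3 P_{33} \;=\; \trace(\mathcal{E}P)\;=\;\mu_1+\mu_2+\mu_3,\]
which holds because $\sum_j\epsilon_j(M^\gamma_j)^2=\epsilon_\gamma$ for $M\in\SO_3^\epsilon(\R)$. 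Consequently $L(M)$ and $P(M)$ carry the same algebraic information, and the problem reduces to deciding when the map $M\mapsto P(M)$ has finite (as opposed to positive-dimensional) fibers.

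To analyze these fibers, observe that $P(QM)=M^t(Q^t\mathcal{D}Q)M$, so $P(QM)=P(M)$ for every $M\in\SO_3^\epsilon(\R)$ forces $Q^t\mathcal{D}Q=\mathcal{D}$. Combined with $Q\in\SO_3^\epsilon(\R)$ (equivalently $Q^t\mathcal{E}Q=\mathcal{E}$), this is equivalent to $Q$ commuting with $\mathcal{E}^{-1}\mathcal{D}=\operatorname{diag}(\mu_1,\mu_2,\mu_3)$. Hence the fiber over $P(M)$ coincides with the left coset $\Stab(\mathcal{D})\cdot M$, where
\[\Stab(\mathcal{D})=\{Q\in\SO_3^\epsilon(\R):\,Q\text{ commutes with }\operatorname{diag}(\mu_1,\mu_2,\mu_3)\},\]
and the dichotomy in the statement becomes whether $\Stab(\mathcal{D})$ is zero-dimensional or positive-dimensional.

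The case analysis completes the proof. If $\mu_1,\mu_2,\mu_3$ are pairwise distinct, any matrix commuting with $\operatorname{diag}(\mu_1,\mu_2,\mu_3)$ is itself diagonal; the $\SO_3^\epsilon$-constraints then reduce to $q_i^2=1$ and $q_1q_2q_3=1$, giving the 4-element subgroup $\{\operatorname{diag}(1,1,1),\operatorname{diag}(1,-1,-1),\operatorname{diag}(-1,1,-1),\operatorname{diag}(-1,-1,1)\}$, which matches precisely the orientation-preserving part of $\Stab_p$ described in Remark~\ref{rmk:stabilizer} for $\dim(\Iso(G))=3$; thus $L(M)$ determines $M$ up to this finite ambiguity. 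If instead $\mu_i=\mu_j$ for some $i\neq j$, then $\Stab(\mathcal{D})$ contains the one-parameter subgroup of $\mathcal{E}$-isometries of the $(E_i,E_j)$-plane, namely ordinary rotations if $\epsilon_i=\epsilon_j$ or hyperbolic boosts if $\epsilon_i\neq\epsilon_j$, making the fiber positive-dimensional and $M$ algebraically unrecoverable from $L(M)$. Finally, the elementary identity $\mu_i-\mu_j=\epsilon_jc_j-\epsilon_ic_i$ translates $\mu_i$-distinctness into $\epsilon_ic_i$-distinctness, which by Proposition~\ref{prop:dim-iso-4-6} is exactly the condition $\dim(\Iso(G))=3$.

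The main obstacle to watch out for is the careful bookkeeping needed to recover $P_{33}$ from the off-diagonal data of $L(M)$, since a naive extraction only yields $P_{jk}$ with $k\in\{1,2\}$; the spectral trace identity for the $\mathcal{E}$-self-adjoint operator $\mathcal{E}P$ is what makes the equivalence $L(M)\leftrightarrow P(M)$ genuinely algebraic. Once this reformulation is established, everything else is routine linear algebra on $\SO_3^\epsilon(\R)$.
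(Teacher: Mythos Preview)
Your argument is correct and follows essentially the same strategy as the paper: both reduce $L(M)$ to the symmetric matrix $P(M)=M^t\mathcal{E}DM$ (your $\mathcal{D}$ is the paper's $\mathcal{E}D$) and then analyze when $M$ can be recovered from it. Your fiber/stabilizer formulation $\Stab(\mathcal{D})\cdot M$ and the paper's eigenvector formulation (rows of $M$ as eigenvectors of $Q\mathcal{E}$) are two sides of the same linear-algebra coin.

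One point worth highlighting: you are actually more careful than the paper on a small issue. The paper asserts that the six coefficients extracted from $L(M)$ give \emph{all} entries $P_{\delta\zeta}$ with $\delta\leq\zeta\in\{1,2,3\}$, but since $\eta^\gamma$ has no $\omega^3$-component one only obtains $P_{\delta\zeta}$ with $\zeta\in\{1,2\}$, hence five independent entries and not $P_{33}$. Your trace identity $\sum_j\epsilon_jP_{jj}=\sum_\gamma\mu_\gamma$ is precisely what closes this gap and makes the equivalence $L(M)\leftrightarrow P(M)$ rigorous. This does not affect the final conclusion, but it is a genuine improvement in precision.
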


\begin{proof}
The matrix $L(M)$ contains three different entries, namely $\sum_{\gamma=1}^3M^\gamma_\delta\eta^\gamma$, with $\delta\in\{1,2,3\}$, which result in six functions $\sum_{\gamma=1}^3\epsilon_\gamma\mu_\gamma M^\gamma_\delta M^\gamma_\zeta$, with $\delta,\zeta\in\{1,2,3\}$ and $\delta\leq \zeta$, when considering the components in $\omega^1$ and $\omega^2$. These make up the entries of $Q=M^t\mathcal{E}DM$, where $D$ and $\mathcal{E}$ are the diagonal matrices with entries $\{\mu_1,\mu_2,\mu_3\}$ and $\{\epsilon_1,\epsilon_2,\epsilon_3\}$ in the diagonal, respectively. In other words, $L(M)$ contains the same information as $Q$. Since $M\in\mathrm{SO}_3^\epsilon(\R)$, we have $M^t\mathcal{E}=\mathcal{E}M^{-1}=(M\mathcal{E})^{-1}$ and hence $Q\mathcal{E}=(M\mathcal{E})^{-1}D(M\mathcal{E})$ is diagonalizable. This means that $\mu_1,\mu_2,\mu_3$ must be the eigenvalues of $Q\mathcal{E}$ and the columns of $(M\mathcal{E})^{-1}=M^t\mathcal{E}$ (i.e., the rows of $M$ multiplied by $\epsilon_1,\epsilon_2,\epsilon_3$) are the corresponding eigenvectors of $Q\mathcal{E}$. 

If all $\epsilon_1c_1,\epsilon_2c_2,\epsilon_3c_3$ are distinct, then all the eigenspaces of $M$ are $1$-dimensional, so the rows of $M$ are determined up to a constant. Since $M\in\SO_3^\epsilon(M)$, the rows (and the columns) of $M$ form an orthonormal frame for the metric of signature $\epsilon_1,\epsilon_2,\epsilon_3$, whence $M$ is determined by $Q$ up to the signs of its rows. Indeed, it is determined up to changing the signs of two rows because it must be $\det(M)=1$, which agrees with the composition in $G$ with the an element of the stabilizer (in the case of $\mathrm{Sol}_3$ with a metric homothetic to the standard one, we can also swap two columns corresponding to non zero $c_i$, which changes all signs of the $c_i$, see Remark~\ref{rmk:stabilizer-data}). On the contrary, if two or more of the $\epsilon_ic_i$ coincide, we can change the corresponding rows of $M$ by other two orthonormal vectors in the same eigenspace and the new matrix $\widetilde M$ still satisfies $Q=\widetilde M^tBD\widetilde M$. 
\end{proof}

\subsection{Recovering the immersion from the shape operator}\label{subsec:daniel-fundamental}
We will assume in the rest of this section that $G$ belongs to one of the families $\E(\kappa,\tau)$, $\mathbb{L}(\kappa,\tau)$ or $\widehat{\mathbb{L}}(\kappa,\tau)$. The key observation is that in these ambient spaces a subset of the fundamental equations only involves one angle function and the corresponding tangent component. Since we will keep on assuming that $(\hat\epsilon_1,\hat\epsilon_2,\hat\epsilon_3)=(\epsilon_1,\epsilon_2,\epsilon_3)$ in order to apply the results in previous sections (just for the sake of simplicity), in exchange we have to deal with two possible scenarios:

\noindent\textbf{Case A:} $\epsilon_1c_1=\epsilon_2c_2\neq\epsilon_3c_3\neq 0$. It comprises all surfaces in $\mathbb{E}(\kappa,\tau)$, in which case we recover Daniel's fundamental theorem~\cite{Daniel07}, spacelike surfaces in $\mathbb{L}(\kappa,\tau)$, and timelike surfaces in $\widehat{\mathbb{L}}(\kappa,\tau)$. Since the vector field defined by~\eqref{eqn:T} reads $T=-c_3(\epsilon_1c_1-\epsilon_3c_3)\nu_3T_3$ and the last equations in~\ref{eqn:comp-iv} and~\ref{eqn:comp-v} can be rewritten by taking into account~\eqref{eqn:JEi} and the fact that $\langle X,T_1\rangle T_2-\langle X,T_2\rangle T_1=\epsilon_1\epsilon_2\epsilon_3\nu_3JX$, we end up with five conditions:
\begin{enumerate}
	\item[(\textsc{i}\textsubscript{A})] $K=\epsilon_3\det(S)+\epsilon_1\epsilon_2(\tfrac{1}{4}\epsilon_3c_3^2+(\epsilon_1c_1-\epsilon_3c_3)c_3\nu_3^2)$,
	\item[(\textsc{ii}\textsubscript{A})] $\nabla_XSY-\nabla_YSX-S[X,Y]=\epsilon_1\epsilon_2\epsilon_3(\epsilon_1c_1-\epsilon_3c_3)c_3\nu_3(\langle Y,T_3\rangle X-\langle X,T_3\rangle Y)$,
	\item[(\textsc{iii}\textsubscript{A})] $\langle T_3,T_3\rangle=\epsilon_3-\epsilon_3\nu_3^2$,
	\item[(\textsc{iv}\textsubscript{A})] $\nabla_XT_3=\epsilon_3\nu_3SX-\frac{1}{2}c_3\nu_3JX$,
	\item[(\textsc{v}\textsubscript{A})] $\nabla\nu_3=-ST_3-\frac{1}{2}\epsilon_3c_3JT_3$.
\end{enumerate}
Also, by plugging $\mu_1=\mu_2=\frac{\epsilon_3c_3}{2}$ and $\mu_3=\epsilon_1c_1-\frac{\epsilon_3c_3}{2}$ into~\eqref{eqn:L}, and since the columns of $M$ form a semi-Riemannian orthonormal basis, we find that
\begin{equation}\label{eqn:M:dim4:A}
L(M)=(\epsilon_1c_1-\epsilon_3c_3)\epsilon_1\epsilon_2\!\!\left(\begin{smallmatrix}
	0& \epsilon_2M^3_3&-\epsilon_3 M^3_2\\
	-\epsilon_1M^3_3&0&\epsilon_3 M^3_1\\
	\epsilon_1M^3_2&-\epsilon_2M^3_1&0
\end{smallmatrix}\right)\!\eta^3+\tfrac{\epsilon_3c_3}{2}\!\!\left(\begin{smallmatrix}
	0&0&-\epsilon_1\omega^2\\
	0&0&\epsilon_2\omega^1\\
	\epsilon_3\omega^2&-\epsilon_3\omega^1&0
\end{smallmatrix}\right)
\end{equation}
only depends on the last row of $M$ (as expected by Proposition~\ref{prop:M-LM}), which is in turn related to the fundamental data as $(M^3_1,M^3_2,M^3_3)=(\epsilon_3T^1_3,\epsilon_3T^2_3,\epsilon_3\nu_3)$, where we will denote $T^i_3=\langle T_3,e_i\rangle$ for $i\in\{1,2\}$. 

\noindent\textbf{Case B:} $\epsilon_1c_1=\epsilon_3c_3\neq\epsilon_2c_2\neq 0$, which consists of all surfaces in $\E(\kappa,\tau)$ once again, the timelike surfaces in $\mathbb{L}(\kappa,\tau)$, and spacelike surfaces in $\widehat{\mathbb{L}}(\kappa,\tau)$. In this case, we get $T=-(\epsilon_1c_1-\epsilon_2c_2)c_2T_2$ and $\langle X,T_3\rangle T_1-\langle X,T_1\rangle T_3=\epsilon_1\nu_2JX$, and this results in the equations:
\begin{enumerate}
	\item[(\textsc{i}\textsubscript{B})] $K=\epsilon_3\det(S)+\epsilon_1\epsilon_2(\tfrac{1}{4}\epsilon_3c_2^2+(\epsilon_1c_1-\epsilon_2c_2)c_2\nu_2^2)$,
	\item[(\textsc{ii}\textsubscript{B})] $\nabla_XSY-\nabla_YSX-S[X,Y]=\epsilon_1\epsilon_2\epsilon_3(\epsilon_1c_1-\epsilon_2c_2)c_2\nu_2(\langle Y,T_2\rangle X-\langle X,T_2\rangle Y)$,
	\item[(\textsc{iii}\textsubscript{B})] $\langle T_2,T_2\rangle=\epsilon_2-\epsilon_3\nu_2^2$,
	\item[(\textsc{iv}\textsubscript{B})] $\nabla_XT_2=\epsilon_3\nu_2SX-\frac{1}{2}\epsilon_2\epsilon_3c_2\nu_2JX$,
	\item[(\textsc{v}\textsubscript{B})] $\nabla\nu_2=-ST_2-\frac{1}{2}\epsilon_2c_2JT_2$.
\end{enumerate}
We can also simplify the matrix $L(M)$ in~\eqref{eqn:L} in this case as
\begin{equation}\label{eqn:M:dim4:B}
L(M)=(\epsilon_1c_1-\epsilon_2c_2)\epsilon_1\epsilon_3\!\!\left(\begin{smallmatrix}
	0& \epsilon_2M^2_3&-\epsilon_3 M^2_2\\
	-\epsilon_1M^2_3&0&\epsilon_3 M^2_1\\
	\epsilon_1M^2_2&-\epsilon_2M^2_1&0
\end{smallmatrix}\right)\!\eta^2+\tfrac{\epsilon_2c_2}{2}\!\!\left(\begin{smallmatrix}
	0&0&-\epsilon_1\omega^2\\
	0&0&\epsilon_2\omega^1\\
	\epsilon_3\omega^2&-\epsilon_3\omega^1&0
\end{smallmatrix}\right),
\end{equation}
which now depends on the second row of $M$ with $(M^2_1,M^2_2,M^2_3)=(\epsilon_2T^1_2,\epsilon_2T^2_2,\epsilon_2\nu_2)$.

Cases A and B can be considered at once by letting $\hat\epsilon_3=-\epsilon_3$ in the Lorentzian setting. This is not just swapping the subindexes $2$ and $3$ to go from (\textsc{i}\textsubscript{A})-(\textsc{v}\textsubscript{A}) to (\textsc{i}\textsubscript{B})-(\textsc{v}\textsubscript{B}) and viceversa, but it agrees with our original compatibility equations in Proposition~\ref{prop:compatibility}. Consequently, we can write a unified statement in all spaces with $4$-dimensional isometry group, but still the proof must be split in cases A and B (which are rather similar but none of them implies the other algebraically speaking) in order to apply Proposition~\ref{prop:second-integration}.

\begin{theorem}\label{thm:dim4}
	Let $G$ be a semi-Riemannian unimodular metric Lie group of diagonalizable type with structure constants $c_1,c_2,c_3$ and signs $\epsilon_1,\epsilon_2,\epsilon_3$ such that $\epsilon_1c_1=\epsilon_2c_2\neq\epsilon_3c_3\neq 0$. Let $\hat\epsilon_1,\hat\epsilon_2,\hat\epsilon_3$ be a permutation of $\epsilon_1,\epsilon_2,\epsilon_3$ and suppose that $\Sigma$ is a simply connected surface with signs $\hat\epsilon_1,\hat\epsilon_2$. Assume there is a smooth function $\nu_3\in\mathcal{C}^\infty(\Sigma)$, a smooth vector field $T_3\in\mathfrak{X}(\Sigma)$, and a smooth field of symmetric operators $S\in\mathfrak{T}_{1,1}(\Sigma)$, satisfying 
	\begin{enumerate}
	\item[$(\textsc{i}^*)$] $K=\hat\epsilon_3\det(S)+\epsilon_1\epsilon_2\epsilon_3\hat\epsilon_3(\tfrac{1}{4}\hat\epsilon_3c_3^2+(\epsilon_1c_1-\epsilon_3c_3)c_3\nu_3^2)$,
	\item[$(\textsc{ii}^*)$] $\nabla_XSY-\nabla_YSX-S[X,Y]=\epsilon_1\epsilon_2\epsilon_3c_3(\epsilon_1c_1-\epsilon_3c_3)\nu_3(\langle Y,T_3\rangle X-\langle X,T_3\rangle Y)$,
	\item[$(\textsc{iii}^*)$] $\langle T_3,T_3\rangle=\epsilon_3-\hat\epsilon_3\nu_3^2$,
	\item[$(\textsc{iv}^*)$] $\nabla_XT_3=\hat\epsilon_3\nu_3SX-\frac{1}{2}\epsilon_3\hat\epsilon_3c_3\nu_3JX$,
	\item[$(\textsc{v}^*)$] $\nabla\nu_3=-ST_3-\frac{1}{2}\epsilon_3c_3JT_3$.
\end{enumerate}
Then, there exists an isometric immersion $\phi:\Sigma\to G$ with third angle function $\nu_3$, third tangent projection $T_3$ and shape operator $S$, and it is unique up to orientation-preserving isometries that also preserve the vertical direction $E_3$.
\end{theorem}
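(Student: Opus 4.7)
The strategy follows the two-step integration scheme mentioned in the introduction: first use Frobenius' theorem to build a change-of-frame map $M:\Sigma\to\SO_3^\epsilon(\R)$ from the prescribed data, and then invoke Proposition~\ref{prop:second-integration} to lift $M$ to the desired immersion $\phi:\Sigma\to G$. The benefit over Theorem~\ref{thm:fundamental} comes from Proposition~\ref{prop:M-LM}: since the isometry group of $G$ has dimension $4$, the matrix $L(M)$ depends only on a single distinguished row of $M$, and this row is determined algebraically by the prescribed $(T_3,\nu_3)$ via the identification~\eqref{eqn:M}.

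Fix a positive orthonormal frame $\{e_1,e_2\}$ on $\Sigma$ with dual coframe $\omega^1,\omega^2$, and assemble the $\mathfrak{so}_3^\epsilon$-valued $1$-form $\Theta=\Omega+\widetilde L$. Here, $\Omega$ carries the intrinsic entries $\omega^i_j$ ($i,j\in\{1,2\}$) built from the Levi-Civita connection of $\Sigma$, together with the entries $\omega^3_j(e_k)=\epsilon_3\langle Se_k,e_j\rangle$ encoding the prescribed shape operator; the symmetry of $S$ ensures $\Omega\in\mathfrak{so}_3^\epsilon(\Omega^1(\Sigma))$. The piece $\widetilde L$ is read off~\eqref{eqn:M:dim4:A} in Case A, or~\eqref{eqn:M:dim4:B} after the analogous relabeling in Case B, by substituting $(M^3_1,M^3_2,M^3_3)$ with the tuple $\tilde v:=(\epsilon_3\langle T_3,e_1\rangle,\epsilon_3\langle T_3,e_2\rangle,\epsilon_3\nu_3)$ read off the data. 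Since $\Sigma$ is simply connected, the classical integration theorem for $\mathfrak g$-valued $1$-forms yields a smooth $M:\Sigma\to\SO_3^\epsilon(\R)$ with $M^{-1}\df M=\Theta$ and any prescribed initial value $M(p_0)$, provided the Maurer--Cartan equation $\df\Theta+\tfrac12[\Theta,\Theta]=0$ holds.

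Verifying this integrability condition is the main obstacle. Expanding the three independent entries of the $\mathfrak{so}_3^\epsilon$-valued $2$-form $\df\Theta+\tfrac12[\Theta,\Theta]$ by means of~\eqref{eqn:domega1}--\eqref{eqn:domega4}, the vanishing splits into three types of contributions: the $(1,2)$-entry is equivalent to Gauss equation (\textsc{i}$^*$); the $(1,3)$- and $(2,3)$-entries, after absorbing the cross terms coming from $[\Omega,\widetilde L]$, yield Codazzi equation (\textsc{ii}$^*$); and the remaining pieces arising from $\df\widetilde L$ are handled by the algebraic relation (\textsc{iii}$^*$) together with the evolution equations (\textsc{iv}$^*$) and (\textsc{v}$^*$) for $T_3$ and $\nu_3$, using~\eqref{eqn:eta-wedge} to rewrite $\eta^3\wedge\eta^i$ in terms of $\omega^1\wedge\omega^2$. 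The calculation is laborious but direct, and essentially refines that of~\cite[Thm.~4.3]{Daniel07} to the semi-Riemannian setting with flexible surface signature; Case B is handled symmetrically.

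Once $M$ is obtained, it remains to check that its distinguished row agrees with $\tilde v$ pointwise. Both $\tilde v$ and the actual last row of $M$ satisfy the same first-order linear PDE system $\df v=v\,\Theta$ on $\Sigma$: the latter by the defining property $M^{-1}\df M=\Theta$, and the former because (\textsc{iii}$^*$)--(\textsc{v}$^*$) for $T_3$ and $\nu_3$ translate precisely into that system after using~\eqref{eqn:JEi}. Choosing $M(p_0)$ so that its last row equals $\tilde v(p_0)$, the simple connectedness of $\Sigma$ and the already established Maurer--Cartan identity force the two solutions to coincide globally. Proposition~\ref{prop:second-integration} then produces the immersion $\phi:\Sigma\to G$ with $\df\phi=(B\circ\phi)M\omega$, whose fundamental data $(T_3,\nu_3,S)$ match the prescribed ones via~\eqref{eqn:M} and the construction of $\Theta$. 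The residual freedom is the initial point $q_0=\phi(p_0)\in G$, plus the rotations of $M(p_0)$ preserving the distinguished row (an $\SO(2)$ or $\SO(1,1)$, depending on the signature); by Remark~\ref{rmk:stabilizer}, these correspond precisely to left-translations together with rotations of $\Stab_{q_0}$ about $E_3$, which gives the claimed uniqueness up to orientation-preserving isometries of $G$ that preserve $E_3$.
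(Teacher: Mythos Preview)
Your proposal is correct and follows essentially the same approach as the paper's own proof: define the $\mathfrak{so}_3^\epsilon$-valued form $\Theta=\Omega+\widetilde L$ from the prescribed data, verify the Maurer--Cartan equation $\df\Theta+\tfrac12[\Theta,\Theta]=0$ using $(\textsc{i}^*)$--$(\textsc{v}^*)$, integrate to get $M$, match the distinguished row of $M$ with $\tilde v$ via the linear first-order system both satisfy, and then invoke Proposition~\ref{prop:second-integration} for existence and the stabilizer description for uniqueness. The paper carries out the same steps explicitly in Case~A (writing out the analogues of~\eqref{eqn:domega3}--\eqref{eqn:domega4}, computing $\df T_3^\flat=-c_3\nu_3\,\omega^1\wedge\omega^2$, and displaying the linear system for the last row), and then remarks that Case~B is analogous.
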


\begin{proof}
We will consider Case A first. Motivated by~\eqref{eqn:M:dim4:A}, define
\begin{equation}\label{thm:dim4:eqn0}
L=(\epsilon_1c_1-\epsilon_3c_3)\epsilon_1\epsilon_2\!\left(\!\begin{smallmatrix}
	0& \epsilon_2\nu_3&-\epsilon_3T^2_3\\
	-\epsilon_1\nu_3&0&\epsilon_3T^1_3\\
	\epsilon_1T^2_3&-\epsilon_2T^1_3&0
\end{smallmatrix}\!\right)\!T_3^\flat+\tfrac{\epsilon_3c_3}{2}\!\!\left(\begin{smallmatrix}
	0&0&-\epsilon_1\omega^2\\
	0&0&\epsilon_2\omega^1\\
	\epsilon_3\omega^2&-\epsilon_3\omega^1&0
\end{smallmatrix}\right),
\end{equation}
where $T_3=\epsilon_1T^1_3e_1+\epsilon_2T^2_3e_2$ is expressed in an orthonormal frame $\{e_1,e_2\}$ in $\Sigma$ with $\langle e_i,e_j\rangle=\epsilon_j\delta^i_j$. Here, $\{\omega^1,\omega^2\}$ is the dual frame and we have also considered the flat $1$-form $T_3^\flat=T_3^1\omega^1+T_3^2\omega^2$ for simplicity. Finally, let $\Theta=\Omega+L$, where the matrix of $1$-forms $\Omega$ is defined by~\eqref{eqn:curvature-forms-as-products} using the given $S$.

Since $e_1$ and $e_2$ are unitary,~\eqref{eqn:curvature-forms} implies that $\omega_1^1\equiv\omega_2^2\equiv0$, and Equations~\eqref{eqn:domega3} and~\eqref{eqn:domega4} actually have a few non-trivial terms. By Equations (\textsc{i}\textsubscript{A}) and (\textsc{ii}\textsubscript{A}) and the fact that $\omega^1_3\wedge\omega^3_2=-\epsilon_2 \epsilon_3 \det(S)\omega^1\wedge\omega^2$ we can write these non-trivial terms as
\begin{equation}\label{thm:dim4:eqn1}
	\begin{aligned}
		\df\omega^1_2 + \omega^1_3 \wedge \omega^3_2 &=\epsilon_1(\tfrac{1}{4}\epsilon_3c_3^2+(\epsilon_1c_1-\epsilon_3c_3)c_3\nu_3^2)\,\omega^1\!\wedge \omega^2,\\
		\df\omega^3_1+\omega^3_2\wedge\omega^2_1&=\epsilon_2c_3(\epsilon_1c_1-\epsilon_3c_3)\nu_3 T^2_3\, \omega^1\!\wedge \omega^2,\\
		\df\omega^3_2+\omega^3_1\wedge\omega^1_2&=-\epsilon_1c_3(\epsilon_1c_1-\epsilon_3c_3)\nu_3 T^1_3\, \omega^1\!\wedge \omega^2.
\end{aligned}\end{equation}
On the other hand, (\textsc{iv}\textsubscript{A}) and (\textsc{v}\textsubscript{A}) can be used along with~\eqref{eqn:curvature-forms-as-products} to compute $\df\nu_3(e_k)=\langle\nabla\nu_3,e_k\rangle$ and $\df T^i_3(e_k)=\langle\nabla_{e_k}T_3,e_i\rangle+\langle T_3,\nabla_{e_k}e_i\rangle$, obtaining
\begin{equation}\label{thm:dim4:eqn2}
	\begin{aligned}
	\df\nu_3&=T_3^1\omega^1_3+T^2_3\omega^2_3+\tfrac{\epsilon_2\epsilon_3}{2}c_3T^2_3\omega^1-\tfrac{\epsilon_1\epsilon_3}{2}c_3T^1_3\omega^2,\\
	\df T^1_3&=\nu_3\omega^3_1+T^2_3\omega^2_1+\tfrac{1}{2}c_3\nu_3\omega^2,\\
	\df T^2_3&=\nu_3\omega^3_2+T^1_3\omega^1_2-\tfrac{1}{2}c_3\nu_3\omega^1.
\end{aligned}
\end{equation}
Equation (\textsc{iv}\textsubscript{A}) and the symmetry of $S$ also yield
\begin{align*}
\df T_3^\flat(e_1,e_2)&=e_1(\langle T_3,e_2\rangle)-e_2(\langle T_3,e_1\rangle)-\langle T_3,[e_1,e_2]\rangle\\
&=\langle\nabla_{e_1}T_3,e_2\rangle-\langle\nabla_{e_2}T_3,e_1\rangle=\tfrac{1}{2}c_3\nu_3(\langle Je_2,e_1\rangle-\langle Je_1,e_2\rangle)=-c_3\nu_3,
\end{align*}
so that we get $\df T_3^\flat=-c_3\nu_3\,\omega^1\wedge\omega^2$. We can then employ this differential and Equations~\eqref{thm:dim4:eqn1},~\eqref{thm:dim4:eqn2} and~\eqref{eqn:domega1} to compute $\df\Theta$. Also taking into account~\eqref{eqn:domega2}, it can be shown that $\df\Theta+\frac{1}{2}[\Theta,\Theta]=0$ after a rather long (but simple) calculation. Here, the Lie bracket is computed in the Lie algebra $\mathfrak{so}^\epsilon_3(\R)$, which is a Lie subalgebra of $\mathfrak{gl}_3(\R)$, whence $\tfrac{1}{2}[\Theta,\Theta]^\alpha_\beta=(\Theta\wedge\Theta)^\alpha_\beta=\sum_{\gamma=1}^3\Theta^\alpha_\gamma\wedge\Theta^\gamma_\beta$ (i.e., the wedge product has to be computed matrixwise). By means of~\cite[Thm.~3.7.14]{Sharpe97}, since $\Sigma$ is assumed simply connected, we can ensure that there is smooth map $M:\Sigma\to\mathrm{SO}_3^\epsilon(\R)$ such that $M^{-1}\df M=\Theta=\Omega+L$ and such an $M$ is unique up to left multiplication by a constant matrix $A\in\mathrm{SO}_3^\epsilon(\R)$.

Since $M$ takes values in $\mathrm{SO}_3^\epsilon(\R)$, its last row $(M^3_1,M^3_2,M^3_3)$ has norm $\epsilon_3$, and so does $(T^1_3,T^2_3,\nu_3)$ because of (\textsc{iii}\textsubscript{A}). Therefore, we can choose $A$ such that these two vectors coincide at some $p_0\in\Sigma$. We claim that then they coincide in all $\Sigma$. For, we observe that expanding out the last row of $\df M=M\Omega+ML$, we find out that $(f_1,f_2,f_3)=(M^3_1,M^3_2,M^3_3)$ is a solution to the following first-order system of linear differential equations:
\begin{equation}\label{thm:dim4:eqn3}
	\begin{aligned}
	\df f_1&=f_2\omega^2_1+f_3\omega^3_1+\epsilon_2(\epsilon_1c_1-\epsilon_3c_3)(T^2_3f_3-\nu_3f_2)T_3^\flat+\tfrac{1}{2}c_3f_3\omega^2,\\
	\df f_2&=f_3\omega^3_2+f_1\omega^1_2+\epsilon_1(\epsilon_1c_1-\epsilon_3c_3)(\nu_3f_1-T^1_3f_3)T_3^\flat-\tfrac{1}{2}c_3f_3\omega^1,\\
	\df f_3&=f_1\omega^1_3+f_2\omega^2_3+\epsilon_1\epsilon_2\epsilon_3(\epsilon_1c_1-\epsilon_3c_3)(T^2_3f_1-T^1_3f_2)T_3^\flat\\
	&\qquad+\tfrac{\epsilon_3}{2}c_3(\epsilon_2f_2\omega^1-\epsilon_1f_1\omega^2).
\end{aligned}
\end{equation}
As $(f_1,f_2,f_3)=(T^1_3,T^2_3,\nu_3)$ is also a solution of~\eqref{thm:dim4:eqn3} by~\eqref{thm:dim4:eqn2} and both solutions coincide at $p_0$, the claim is proved (as a matter of fact, we are using the same result in~\cite{Sharpe97} but in the Lie group $\R^3$ rather than $\mathrm{SO}^\epsilon_3(\R)$). A simple comparison between~\eqref{eqn:M:dim4:A} and~\eqref{thm:dim4:eqn0} implies that $L=L(M)$ so the immersion exists by Proposition~\ref{prop:second-integration}. Uniqueness follows from the uniqueness in Proposition~\ref{prop:second-integration} (which allows us to fix the image of $p_0$) plus the degrees of freedom we have when choosing $A$ (which amounts to choosing an element of the stabilizer of $p_0$).

Assume now that we have Case B after relabeling indexes, and define
\[
L=(\epsilon_1c_1-\epsilon_2c_2)\epsilon_1\epsilon_3\!\!\left(\begin{smallmatrix}
	0& \epsilon_2\nu_2&-\epsilon_3 T^2_2\\
	-\epsilon_1\nu_2&0&\epsilon_3 T^1_2\\
	\epsilon_1T^2_2&-\epsilon_2T^1_2&0
\end{smallmatrix}\right)\!T_2^\flat+\tfrac{\epsilon_2c_2}{2}\!\left(\begin{smallmatrix}
	0&0&-\epsilon_1\omega^2\\
	0&0&\epsilon_2\omega^1\\
	\epsilon_3\omega^2&-\epsilon_3\omega^1&0
\end{smallmatrix}\right),
\]
where $T_2=\epsilon_1T^1_2e_1+\epsilon_2T^2_2e_2$. Gauss and Codazzi equations (\textsc{i}\textsubscript{B}) and (\textsc{ii}\textsubscript{B}) give
	\begin{align*}
		\df\omega^1_2 + \omega^1_3 \wedge \omega^3_2 &=\epsilon_1(\tfrac{1}{4}\epsilon_3c_2^2+(\epsilon_1c_1-\epsilon_2c_2)c_2\nu_2^2)\,\omega^1\!\wedge \omega^2,\\
		\df\omega^3_1+\omega^3_2\wedge\omega^2_1&=\epsilon_2(\epsilon_1c_1-\epsilon_2c_2)c_2\nu_2 T^2_2\, \omega^1\!\wedge \omega^2,\\
		\df\omega^3_2+\omega^3_1\wedge\omega^1_2&=-\epsilon_1(\epsilon_1c_1-\epsilon_2c_2)c_2\nu_2 T^1_2\, \omega^1\!\wedge \omega^2,
\end{align*}
whereas (\textsc{iv}\textsubscript{B}) and (\textsc{v}\textsubscript{B}) can be rewritten as
\begin{align*}
	\df\nu_2&=T_2^1\omega^1_3+T^2_2\omega^2_3+\tfrac{1}{2}c_2T^2_2\omega^1-\tfrac{1}{2}\epsilon_1\epsilon_2c_2T^1_2\omega^2,\\
	\df T^1_2&=\nu_2\omega^3_1+T^2_2\omega^2_1+\tfrac{1}{2}\epsilon_2\epsilon_3c_2\nu_2\omega^2,\\
	\df T^2_2&=\nu_2\omega^3_2+T^1_2\omega^1_2-\tfrac{1}{2}\epsilon_2\epsilon_3c_2\nu_2\omega^1.
\end{align*}
Note also that $\df T_2^\flat=-\epsilon_2\epsilon_3c_2\nu_2\omega^1\wedge\omega^2$. The rest of the argument is completely analogous to Case A, and will be omitted.
\end{proof}

\begin{remark}\label{rmk:product-compatibility}
As $\tau\to 0$, the unimodular Lie group structure we have been considering in $\E(\kappa,\tau)$, $\mathbb{L}(\kappa,\tau)$ and $\widehat{\mathbb{L}}(\kappa,\tau)$ disappears. However, these families converge to the product manifolds $\mathbb{M}^2(\kappa)\times\R$, $\mathbb{M}^2(\kappa)\times\R_1$ and $\mathbb{M}^2_1(\kappa)\times\R$ by just taking limits of the metrics given by~\eqref{eqn:Ekt-Lkt-metric} and~\eqref{eqn:darkLkt-metric}. The fundamental equations (\textsc{i}$^*$)-(\textsc{v}$^*$) in Theorem~\ref{thm:dim4} also have nice limits as $\tau\to 0$ because they ultimately depend on $c_1c_3=\pm\kappa$ and $c_3^2=\tau^2$. Interestingly, the limit version of Theorem~\ref{thm:dim4} still holds true by the work of Daniel in $\mathbb{M}^n(\kappa)\times\R$~\cite[Thm.~3.3]{Daniel09}, which was later adapted to the Lorentzian case $\mathbb{M}^n(\kappa)\times\R_1$ by Roth~\cite[Thm.~1]{Roth11}. Finally, the case of $\mathbb{M}^2_1(\kappa)\times\R$ is a particular case of the warped products considered by Lawn--Ortega~\cite[Thm.~1]{LO15}.
\end{remark}

\subsection{Lorentzian versions of the Daniel correspondence} 
An isometric deformation of an immersion is equivalent to a deformation of its fundamental data which preserves the fundamental equations. Daniel's generalization of the Lawson correspondence relies on the fact that the traceless operator is rotated, which gives a nice control of Gauss and Codazzi equations, as we show next.

Consider the rotation of angle $\theta\in\R$ in the tangent bundle of $\Sigma$ we have already introduced in Theorem~\ref{thm:angles} given by 
\[\Rot_\theta=\begin{cases}
	\cos(\theta)\,\mathrm{id}+\sin(\theta)J&\text{if $\Sigma$ is Riemannian},\\
	\cosh(\theta)\,\mathrm{id}+\sinh(\theta)J&\text{if $\Sigma$ is Lorentzian}.
\end{cases}\]
Note that $\Rot_\theta$ is a field of isometric operators (recall that $J$ is not an isometry in the Lorentzian case), and satisfies $\det(\Rot_\theta)=1$, since, in our usual positive orthonormal frame $\{e_1,e_2\}$ with signs $\hat\epsilon_1,\hat\epsilon_2$, we can express
\[\Rot_\theta\equiv\begin{pmatrix}
	\cos\theta&\sin\theta\\
	-\sin\theta&\cos\theta
\end{pmatrix}\qquad\text{or}\qquad
\Rot_\theta\equiv\begin{pmatrix}
	\cosh\theta&-\hat\epsilon_1\sinh\theta\\
	\hat\epsilon_2\sinh\theta&\cosh\theta
\end{pmatrix}, \]
depending on whether $\hat\epsilon_1\hat\epsilon_2=1$ or $\hat\epsilon_1\hat\epsilon_2=-1$, respectively. 

\begin{lemma}\label{lemma:traceless}
Let $S$ and $\widetilde S$ be fields of symmetric operators in a Riemannian or Lorentzian surface $\Sigma$ and assume that $H=\epsilon\trace(S)$ and $\widetilde H=\epsilon\trace(\widetilde S)$ are constants for some $\epsilon\in\{-1,1\}$. If $\widetilde S-\epsilon\widetilde{H}\mathrm{id}={\Rot_\theta}\circ{(S-\epsilon H\mathrm{id})}$ for some $\theta\in\mathbb{R}$, then the following assertions hold true:
	\begin{enumerate}[label=\emph{(\alph*)}]
		\item $\det(\widetilde S)-\widetilde H^2=\det(S)-H^2$,
		\item $\nabla_X\widetilde SY-\nabla_Y\widetilde SX-\widetilde S[X,Y]=\Rot_\theta(\nabla_X SY-\nabla_Y SX- S[X,Y])$,
		\item $\widetilde S-\widetilde \tau J={\Rot_\theta}\circ{(S-\tau J)}$, where $\tau,\widetilde{\tau}\in\R$ are constants such that
		\begin{equation}\label{lemma:traceless:eqn1}
		(\epsilon\widetilde H,\widetilde{\tau})=\begin{cases}
		(\epsilon H\cos\theta+\tau\sin\theta,-\epsilon H\sin\theta+\tau\cos\theta)&\text{if $\Sigma$ is Riem.},\\
		(\epsilon H\cosh\theta-\tau\sinh\theta,-\epsilon H\sinh\theta+\tau\cosh\theta)&\text{if $\Sigma$ is Lorentz}.\end{cases}\end{equation}
	\end{enumerate}
\end{lemma}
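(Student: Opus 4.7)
The plan is to decompose $S = \epsilon H\,\id + S_0$ and $\widetilde S = \epsilon\widetilde H\,\id + \widetilde S_0$ into their trace and traceless parts, so the hypothesis rewrites as $\widetilde S_0 = \Rot_\theta\circ S_0$. The backbone of all three items is the fact that $\nabla J = 0$, and hence $\nabla\Rot_\theta = 0$, since $\Rot_\theta$ is a fixed scalar combination of $\id$ and $J$. I would verify this in an orthonormal frame using $Je_1 = \hat\epsilon_2 e_2$, $Je_2 = -\hat\epsilon_1 e_1$ together with the skew-symmetry $\hat\epsilon_i\omega^i_j + \hat\epsilon_j\omega^j_i = 0$ of the connection forms.

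For item (a), the $2\times 2$ algebraic identity $\det(a\,\id + B) = a^2 + a\trace(B) + \det(B)$ applied with $B = S_0$ and $B = \widetilde S_0$ gives $\det(S) - H^2 = \det(S_0)$ and $\det(\widetilde S) - \widetilde H^2 = \det(\widetilde S_0) = \det(\Rot_\theta)\det(S_0)$. One then checks $\det(\Rot_\theta) = \cos^2\theta + \sin^2\theta = 1$ in the Riemannian case and $\det(\Rot_\theta) = \cosh^2\theta - \sinh^2\theta = 1$ in the Lorentzian case, which closes (a).

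For item (b), a one-line manipulation using $\nabla_X Y - \nabla_Y X = [X,Y]$ shows that $\nabla_X SY - \nabla_Y SX - S[X,Y]$ equals $(\nabla_X S)Y - (\nabla_Y S)X$, so it vanishes whenever $S$ is a constant multiple of $\id$. Replacing $S$ by $S_0$ and $\widetilde S$ by $\widetilde S_0$ in the Codazzi-type expression is therefore free, and the parallelism of $\Rot_\theta$ then yields $(\nabla_X\widetilde S_0)Y - (\nabla_Y\widetilde S_0)X = \Rot_\theta\bigl((\nabla_X S_0)Y - (\nabla_Y S_0)X\bigr)$, which is precisely (b).

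For item (c), I would rewrite the hypothesis as $\widetilde S - \Rot_\theta\circ S = \epsilon\widetilde H\,\id - \epsilon H\,\Rot_\theta$ and the target identity as $\widetilde S - \Rot_\theta\circ S = \widetilde\tau\,J - \tau\,\Rot_\theta\circ J$, then expand everything in the basis $\{\id, J\}$ using $\Rot_\theta = \cos\theta\,\id + \sin\theta\,J$ with $J^2 = -\id$ (Riemannian) or $\Rot_\theta = \cosh\theta\,\id + \sinh\theta\,J$ with $J^2 = \id$ (Lorentzian). Matching coefficients of $\id$ and $J$ produces exactly the two scalar relations of \eqref{lemma:traceless:eqn1}; the first determines $\tau$ uniquely whenever $\sin\theta\neq 0$ or $\sinh\theta\neq 0$ (and $\theta = 0$ is trivial, with $\tau,\widetilde\tau$ arbitrary and $\widetilde H = H$), after which $\widetilde\tau$ is read off from the second. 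The only mild obstacle is bookkeeping so that the two signatures collapse into the single formula stated, but no genuinely non-trivial computation is involved.
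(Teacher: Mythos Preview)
Your proposal is correct and follows essentially the same route as the paper's proof: both arguments hinge on $\nabla J=0$ (hence $\nabla\Rot_\theta=0$), compute $\det(S-\epsilon H\,\id)=\det(S)-H^2$ for item~(a), reduce (b) to the parallelism of $\Rot_\theta$, and obtain (c) by expanding in the basis $\{\id,J\}$ and matching coefficients. Your presentation is slightly more streamlined in (a) via the identity $\det(a\,\id+B)=a^2+a\,\trace(B)+\det(B)$, whereas the paper writes out the matrix entries explicitly, but there is no substantive difference.
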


\begin{proof}
If $\Sigma$ is Riemannian, this result is essentially proved in the work of Daniel~\cite{Daniel07,Daniel09}, so we will briefly discuss the Lorentzian case. By taking determinants in $\widetilde S-\epsilon\widetilde{H}\mathrm{id}={\Rot_\theta}\circ{(S-\epsilon H\mathrm{id})}$, we get $\det(\widetilde S-\epsilon\widetilde{H}\mathrm{id})=\det(S-\epsilon H\mathrm{id})$. Since $S$ is symmetric, it takes the matrix form $S\equiv\left(\begin{smallmatrix}
	a&\hat\epsilon_1b\\\hat\epsilon_2b&c\end{smallmatrix}\right)$, whence $H=\frac{1}{2}\epsilon(a+c)$ and
\begin{align*}
\det(S-\epsilon H\,
\mathrm{id})&=\det\left(\begin{smallmatrix}
	a-\epsilon H&\hat\epsilon_1b\\\hat\epsilon_2b&c-\epsilon H\end{smallmatrix}\right)\\
	&=(ac-\hat\epsilon_1\hat\epsilon_2b^2)-\epsilon H(a+c)+H^2=\det(S)-H^2.
\end{align*}
Similarly, we find that $\det(\widetilde S-\epsilon\widetilde H\mathrm{id})=\det(\widetilde S)-\widetilde H^2$ and we have item (a). 

Item (b) is a direct consequence of the constancy of $H$ and $\widetilde H$, along with the fact that $\nabla_Z$ and $J$ commute for any vector field $Z$. This last claim in turn follows readily by considering the skew-symmetric $2$-form $\alpha(X,Y)=\nabla_XJY-J\nabla_XY$ and checking that $\alpha(e_1,e_2)=0$. Finally, as for item (c), by subtracting $\widetilde S-\epsilon\widetilde{H}\mathrm{id}={\Rot_\theta}\circ{(S-\epsilon H\mathrm{id})}$ and $\widetilde S-\widetilde aJ={\Rot_\theta}\circ{(S-aJ)}$, this last condition will hold if and only if $\epsilon H\,\mathrm{id}-aJ={\Rot_\theta}\circ{(\epsilon H\,\mathrm{id}-aJ)}$. Expanding $\Rot_\theta$ as a linear combination of $\mathrm{id}$ and $J$, the coefficients give~\eqref{lemma:traceless:eqn1} componentwise, so we are done.
\end{proof}

The fundamental equations in Theorem~\ref{thm:dim4} can be particularized for the case of $\mathbb{L}(\kappa,\tau)$ by setting $c_1=c_2=\frac{-\kappa}{2\tau}$ and $c_3=-2\tau\neq 0$ with signs $\epsilon_1=\epsilon_2=1$ and $\epsilon_3=-1$ as explained in Remark~\ref{rmk:extension-previous-metrics}. This gives
	\begin{enumerate}
	\item[$(\textsc{i}^*_{\mathbb{L}})$] $K=\hat\epsilon_3\det(S)-\tau^2-\hat\epsilon_3(\kappa+4\tau^2)\nu_3^2$,
	\item[$(\textsc{ii}^*_{\mathbb{L}})$] $\nabla_XSY-\nabla_YSX-S[X,Y]=-(\kappa+4\tau^2)\nu_3(\langle Y,T_3\rangle X-\langle X,T_3\rangle Y)$,
	\item[$(\textsc{iii}^*_{\mathbb{L}})$] $\langle T_3,T_3\rangle=-1-\hat\epsilon_3\nu_3^2$,
	\item[$(\textsc{iv}^*_{\mathbb{L}})$] $\nabla_XT_3=\hat\epsilon_3\nu_3(SX-\tau JX)$,
	\item[$(\textsc{v}^*_{\mathbb{L}})$] $\nabla\nu_3=-ST_3-\tau JT_3$,
\end{enumerate}
Likewise, in the case of $\widehat{\mathbb{L}}(\kappa,\tau)$-spaces, we can set $c_1=\frac{-\kappa}{2\tau}$, $c_2=\frac{\kappa}{2\tau}$ and $c_3=2\tau\neq 0$ with signs $\epsilon_1=\epsilon_3=1$ and $\epsilon_2=-1$ in order to get alike equations
	\begin{enumerate}
	\item[$(\textsc{i}^*_{\widehat{\mathbb{L}}})$] $K=\hat\epsilon_3\det(S)-\tau^2+\hat\epsilon_3(\kappa+4\tau^2)\nu_3^2$,
	\item[$(\textsc{ii}^*_{\widehat{\mathbb{L}}})$] $\nabla_XSY-\nabla_YSX-S[X,Y]=(\kappa+4\tau^2)\nu_3(\langle Y,T_3\rangle X-\langle X,T_3\rangle Y)$,
	\item[$(\textsc{iii}^*_{\widehat{\mathbb{L}}})$] $\langle T_3,T_3\rangle=1-\hat\epsilon_3\nu_3^2$,
	\item[$(\textsc{iv}^*_{\widehat{\mathbb{L}}})$] $\nabla_XT_3=\hat\epsilon_3\nu_3(SX-\tau JX)$,
	\item[$(\textsc{v}^*_{\widehat{\mathbb{L}}})$] $\nabla\nu_3=-ST_3-\tau JT_3$,
\end{enumerate}
Recall that all the above equations still apply in the limit product case $\tau=0$, see Remark~\ref{rmk:product-compatibility}. Next result shows a transformation of the fundamental data of a \textsc{cmc} immersion that preserves the above equations, which gives the desired isometric deformation. The proof relies on Lemma~\ref{lemma:traceless} and the fact that
\begin{equation}\label{eqn:Rot-matrix}
\Rot_\theta(\langle X,Z\rangle Y-\langle Y,Z\rangle X)=\langle X,\Rot_\theta(Z)\rangle Y-\langle Y,\Rot_\theta(Z)\rangle X\end{equation}
for all vector fields $X,Y,Z\in\mathfrak{X}(\Sigma)$. Since the argument is similar to the case of $\mathbb{E}(\kappa,\tau)$ in~\cite{Daniel07}, we will not give the details. Notice that our correspondence applies to both the spacelike case ($\hat\epsilon_3=-1$) and the timelike case ($\hat\epsilon_3=1$).

\begin{corollary}\label{coro:fundamental-L}
Let $\phi:\Sigma\to \mathbb{L}(\kappa,\tau)$ (resp.\ $\widehat{\mathbb{L}}(\kappa,\tau)$) be an isometric inmersion of a Riemannian or Lorentzian surface $\Sigma$ with \textsc{cmc} $H\in\R$ and sign $\hat\epsilon_3$. Let $(S,J,T_3,\nu_3)$ be its fundamental data. Given $\theta\in\R$, we define $\widetilde H,\widetilde\tau\in\R$ such that~\eqref{lemma:traceless:eqn1} holds for $\epsilon=\hat{\epsilon}_3$, and take $\widetilde\kappa\in\mathbb{R}$ such that $\widetilde\kappa+4\widetilde\tau^2=\kappa+4\tau^2$. Then
\[(\widetilde S_3,\widetilde J,\widetilde T_3,\widetilde\nu_3)=(\hat\epsilon_3 H\,\mathrm{id}+{\Rot_\theta}\circ{(S-\hat\epsilon_3 H\,\mathrm{id})},J,\Rot_\theta(T_3),\nu_3)\]
are the fundamental data of another isometric immersion $\widetilde\phi:\Sigma\to\widehat{\mathbb{L}}(\widetilde\kappa,\widetilde\tau)$ (resp.\ $\widehat{\mathbb{L}}(\widetilde\kappa,\widetilde\tau)$) with \textsc{cmc} $\widetilde H$ and the same sign $\hat\epsilon_3$.
\end{corollary}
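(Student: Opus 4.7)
The plan is to verify that the proposed fundamental data $(\widetilde S,J,\Rot_\theta(T_3),\nu_3)$ satisfy all five fundamental equations of Theorem~\ref{thm:dim4} for the target ambient space (either $\mathbb{L}(\widetilde\kappa,\widetilde\tau)$ or $\widehat{\mathbb{L}}(\widetilde\kappa,\widetilde\tau)$), and then invoke that theorem to obtain $\widetilde\phi$ with the same sign $\hat\epsilon_3$. Three tools will be used repeatedly: $\Rot_\theta$ is an isometry of the tangent bundle, it commutes with $J$ and hence with $\nabla$ (since $\theta$ is constant and $\nabla$ commutes with $J$), and Lemma~\ref{lemma:traceless} (applied with $\epsilon=\hat\epsilon_3$) controls the algebraic behavior of $\widetilde S$.

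Equation $(\textsc{iii}^*)$ is immediate from the isometric character of $\Rot_\theta$. For Gauss equation $(\textsc{i}^*)$, I would combine Lemma~\ref{lemma:traceless}(a), which gives $\det(\widetilde S)-\det(S)=\widetilde H^2-H^2$, with the identity obtained by squaring the relations in~\eqref{lemma:traceless:eqn1}: namely $\widetilde H^2+\widetilde\tau^2=H^2+\tau^2$ if $\Sigma$ is spacelike ($\hat\epsilon_3=-1$) and $\widetilde H^2-\widetilde\tau^2=H^2-\tau^2$ if $\Sigma$ is timelike ($\hat\epsilon_3=+1$). Either case yields $\hat\epsilon_3(\widetilde H^2-H^2)=\widetilde\tau^2-\tau^2$, which together with the assumption $\widetilde\kappa+4\widetilde\tau^2=\kappa+4\tau^2$ makes all curvature terms match. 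Codazzi equation $(\textsc{ii}^*)$ follows directly from Lemma~\ref{lemma:traceless}(b) combined with the identity~\eqref{eqn:Rot-matrix}. Equation $(\textsc{iv}^*)$ is an easy consequence of the commutation relation $\nabla_X\widetilde T_3=\Rot_\theta\nabla_X T_3$ and Lemma~\ref{lemma:traceless}(c), which gives $\Rot_\theta\circ(S-\tau J)=\widetilde S-\widetilde\tau J$.

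The hard part will be equation $(\textsc{v}^*)$, since $S+\tau J$ does not transform under $\Rot_\theta$ in the direct manner of $S-\tau J$. The key idea is to use the dual identity $(S+\tau J)^*=S-\tau J$ with respect to the semi-Riemannian metric, which holds because $S$ is self-adjoint and $J$ is skew-adjoint. Testing against an arbitrary $X\in\mathfrak{X}(\Sigma)$, the chain
\begin{align*}
\langle(\widetilde S+\widetilde\tau J)\widetilde T_3,X\rangle
&=\langle\widetilde T_3,(\widetilde S-\widetilde\tau J)X\rangle
=\langle\Rot_\theta T_3,\Rot_\theta(S-\tau J)X\rangle\\
&=\langle T_3,(S-\tau J)X\rangle
=\langle(S+\tau J)T_3,X\rangle
\end{align*}
uses Lemma~\ref{lemma:traceless}(c) in the second equality and the isometry property of $\Rot_\theta$ in the third. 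Consequently $(\widetilde S+\widetilde\tau J)\widetilde T_3=(S+\tau J)T_3$, and $(\textsc{v}^*)$ reduces to $\nabla\widetilde\nu_3=\nabla\nu_3=-(S+\tau J)T_3=-(\widetilde S+\widetilde\tau J)\widetilde T_3$. With all five compatibility equations verified for $(\widetilde\kappa,\widetilde\tau)$, an application of Theorem~\ref{thm:dim4} (in Case A or Case B according to whether $\Sigma$ is spacelike or timelike in the target) produces the desired immersion $\widetilde\phi$, uniquely up to orientation- and $E_3$-preserving isometries of the target space.
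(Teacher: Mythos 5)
Your proof is correct and takes essentially the same route the paper intends: it verifies the five compatibility equations of Theorem~\ref{thm:dim4} for the data $(\widetilde S,J,\Rot_\theta(T_3),\nu_3)$ using Lemma~\ref{lemma:traceless} and the identity~\eqref{eqn:Rot-matrix}, and then invokes that theorem — the paper simply omits these details by referring to Daniel's argument in $\mathbb{E}(\kappa,\tau)$. Two small remarks: you implicitly (and correctly) read $\widetilde S$ as $\hat\epsilon_3\widetilde H\,\mathrm{id}+{\Rot_\theta}\circ{(S-\hat\epsilon_3 H\,\mathrm{id})}$, which is what the statement must mean for $\trace(\widetilde S)=2\hat\epsilon_3\widetilde H$ and for Lemma~\ref{lemma:traceless} to apply, and your adjoint argument for $(\textsc{v}^*)$ tacitly uses that $\widetilde S$ is self-adjoint (which follows from $JS_0=-S_0J$ for the traceless part $S_0$, so $\Rot_\theta S_0$ is symmetric) — worth a sentence, but not a gap.
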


\section{On the generalized Lawson correspondence}\label{sec:lawson}

This last section will be devoted to investigate if there is a possible Lawson-type correspondence between two unimodular metric Lie groups other than the Daniel correspondence. We will restrict to the Riemannian case and give a negative answer, so it is plausible that the same non-existence result holds in the Lorentzian setting. 

Fix constants $H,\widetilde H\in\mathbb{R}$ and Riemannian unimodular metric Lie groups $G$ and $\widetilde G$. By a \emph{Lawson-type correspondence} we mean a bijection (up to ambient isometries) between the family of isometric $H$-immersions $\phi:\Sigma\to G$ and the family of isometric $\widetilde H$-immersions $\widetilde\phi:\Sigma\to\widetilde G$ satisfying the following properties:
\begin{enumerate}
	\item There is a \emph{phase angle} $\theta\in\R$ (not depending on $\phi$) such that $\widetilde S-\widetilde H\,\mathrm{id}={\Rot_\theta}\circ{(S-H\,\mathrm{id})}$, where $S$ and $\widetilde S$ denote the shape operators.
	\item Both immersions induce the same orientation in $\Sigma$ (i.e., $\widetilde J=J$) and the left-invariant Gauss maps $g=(\nu_1,\nu_2,\nu_3)$ and $\widetilde g=(\widetilde \nu_1,\widetilde \nu_2,\widetilde \nu_3)$ induce the same orientation in $\mathbb{S}^2$ (i.e., at points $p\in\Sigma$ where both $\df g_p$ and $\df\widetilde g_p$ are linear isomorphisms, $\{\df g_p(e_1),\df g_p(e_2),g(p)\}$ and $\{\df \widetilde g_p(e_1),\df \widetilde g_p(e_2),\widetilde g(p)\}$ define the same orientation as bases of $\R^3$ for any basis $\{e_1,e_2\}$ of $T_p\Sigma$).
\end{enumerate}

\begin{remark}\label{rmk:gauss-orientation}
This last requirement in item (2) for the orientations of the Gauss maps is necessary because it encodes the ambient orientation given by the frames $\{E_1,E_2,E_3\}$ and $\{\widetilde E_1,\widetilde E_2,\widetilde E_3\}$, which are crucial for the definition of $g$ and $\widetilde g$, respectively. We have already met a similar situation in Corollary~\ref{coro:uniqueness-angles}, but here we are not assuming that $g$ and $\widetilde g$ differ by an isometry of $\mathbb{S}^2$, not even pointwise speaking. Notice that, in Daniel correspondence, $\widetilde g$ and $g$ pointwise differ by an ambient rotation because the angle function $\nu_3$ is preserved.

It is also interesting to point out the not so well known fact that the Daniel correspondence generalizes the classical Lawson correspondence just partially. When we restrict to $\E(\kappa,\tau)$-spaces with $\kappa-4\tau^2=0$, the Daniel correspondence has one degree of freedom (the phase angle determines the target space); on the contrary, the Lawson correspondence has two degrees of freedom. For instance, transforming minimal surfaces in $\mathbb{S}^3$ into minimal surfaces in $\mathbb{S}^3$ by a rotation of the shape operator is considered within Lawson's correspondence (see also~\cite[\S5]{CCC}), but not explained by Daniel's. It is precisely in those cases in the intersection of Lawson and Daniel in which one can assume that $g$ and $\widetilde g$ are globally congruent.
\end{remark} 

We will say that a Lawson-type correspondence is \emph{trivial} if it is just the identity (i.e., it amounts to some ambient isometry), or consists in a relabeling of the indexes $c_1,c_2,c_3$ or a global change of their signs. Note that all these cases will appear at some moment in the discussion below.

\begin{theorem}\label{thm:lawson}
	Any non-trivial Lawson-type correspondence between unimodular metric Lie groups is an instance of the Daniel correspondence in $\mathbb{E}(\kappa,\tau)$-spaces or an instance of the Lawson correspondence in space forms.
\end{theorem}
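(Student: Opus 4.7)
The plan is to derive pointwise identities from the correspondence and then run a case analysis governed by the eigenvalue pattern of the curvature coefficients. Fix a \textsc{cmc} $H$ immersion $\phi:\Sigma\to G$ with fundamental data $(S,J,T_\alpha,\nu_\alpha)$, and let $\widetilde\phi:\Sigma\to\widetilde G$ be its correspondent with data $(\widetilde S,J,\widetilde T_\alpha,\widetilde\nu_\alpha)$; write $a_\alpha,\widetilde a_\alpha$ for the coefficients~\eqref{eqn:ai} of $G,\widetilde G$. Applying Lemma~\ref{lemma:traceless}(a) together with Gauss equation~\ref{eqn:comp-i} on both sides (same intrinsic $K$) yields
\[H^2+\sum_\alpha a_\alpha\nu_\alpha^2\;=\;\widetilde H^2+\sum_\alpha\widetilde a_\alpha\widetilde\nu_\alpha^2,\]
while Lemma~\ref{lemma:traceless}(b) combined with Codazzi equation~\ref{eqn:comp-ii} and~\eqref{eqn:Rot-matrix} forces the field~\eqref{eqn:T} to satisfy
\[\widetilde T=\Rot_\theta(T),\qquad T=\textstyle\sum_\alpha a_\alpha\nu_\alpha T_\alpha,\quad \widetilde T=\sum_\alpha\widetilde a_\alpha\widetilde\nu_\alpha\widetilde T_\alpha.\]
Using $\sum_\alpha\nu_\alpha T_\alpha=0$ from~\eqref{eqn:algebraic-relations} we have the flexibility $T=\sum_\alpha(a_\alpha-\lambda)\nu_\alpha T_\alpha$ for any $\lambda\in\R$, which is what makes the degenerate (Daniel/Lawson) situations tractable.

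Next, I would differentiate $\widetilde T=\Rot_\theta(T)$ and substitute the transport equations~\ref{eqn:comp-iv} and~\ref{eqn:comp-v} for both immersions together with $\widetilde S-\widetilde H\id=\Rot_\theta(S-H\id)$, obtaining further pointwise identities. Since the correspondence must operate over the full moduli of \textsc{cmc} $H$-immersions in $G$, and since by Theorem~\ref{thm:fundamental} this moduli is parametrised locally by tangent-frame data with considerable freedom at any point, the identities collapse to algebraic relations among the $a_\alpha,\widetilde a_\alpha$ and the rotation angle $\theta$.

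Three cases arise from the multiplicity pattern of the $a_\alpha$, in line with Proposition~\ref{prop:dim-iso-4-6}. If $a_1=a_2=a_3$, then $G$ is a space form and $T\equiv 0$; hence $\widetilde T\equiv 0$, and since $\sum_\alpha\widetilde\nu_\alpha\widetilde T_\alpha=0$ already, generic variation of $\widetilde\nu_\alpha$ forces $\widetilde a_1=\widetilde a_2=\widetilde a_3$, placing $\widetilde G$ among the space forms and recovering Lawson. If $a_1=a_2\neq a_3$ (after relabeling), then $T=(a_3-a_1)\nu_3 T_3$ and $\widetilde T=\Rot_\theta(T)$ must likewise collapse to a single summand $(\widetilde a_{\alpha'}-\widetilde a_1)\widetilde\nu_{\alpha'}\widetilde T_{\alpha'}$; Proposition~\ref{prop:dim-iso-4-6} then forces $\widetilde G$ into the $\E(\widetilde\kappa,\widetilde\tau)$ family, and the Gauss identity paired with Remark~\ref{rmk:extension-previous-metrics} matches the classical Daniel condition $\kappa+4\tau^2=\widetilde\kappa+4\widetilde\tau^2$. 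The last case, with all three $a_\alpha$ distinct (so $\dim\Iso(G)=3$), has generically non-degenerate $T$, and the matching must respect the privileged frame $\{E_1,E_2,E_3\}$; by Remark~\ref{rmk:stabilizer-data} this only permits the trivial relabelings and sign changes of the $c_\alpha$ excluded from the statement.

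The main obstacle is the third case. One has to verify that the polynomial identities coming from the differentiated equations~\ref{eqn:comp-iv} and~\ref{eqn:comp-v} leave no non-trivial freedom when the $a_\alpha$ are pairwise distinct, cross-comparing groups from Table~\ref{fig:unimodular-mlg-riemannian} and tracking the small discrete symmetries described in Remark~\ref{rmk:stabilizer}. This is a systematic but laborious case split; once completed, it confirms that the only non-trivial Lawson-type correspondences among Riemannian unimodular metric Lie groups are the classical Lawson correspondence in space forms and Daniel's correspondence in $\E(\kappa,\tau)$-spaces.
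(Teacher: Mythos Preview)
The proposal has a genuine structural gap in the case analysis. You organize cases by the multiplicity pattern of the curvature coefficients $a_\alpha$ and identify ``two equal, one distinct'' with the $\E(\kappa,\tau)$ family via Proposition~\ref{prop:dim-iso-4-6}. But this identification is wrong: one computes $a_1-a_2=2(\mu_2-\mu_1)\mu_3$, so $a_1=a_2$ holds not only when $\mu_1=\mu_2$ (i.e.\ $c_1=c_2$, the \textsc{bcv} case) but also when $\mu_3=0$ (i.e.\ $c_3=c_1+c_2$), and the latter \textsc{sum}-type spaces generically have $\dim\Iso(G)=3$. So your middle case does \emph{not} land automatically in $\E(\kappa,\tau)$-spaces, and the claim that Proposition~\ref{prop:dim-iso-4-6} ``forces $\widetilde G$ into the $\E(\widetilde\kappa,\widetilde\tau)$ family'' fails. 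The paper handles this by a further split into \textsc{bcv--bcv}, \textsc{sum--sum}, and \textsc{sum--bcv} subcases, each requiring a separate argument (in particular the divergence identities of Lemma~\ref{lemma:T-divergence}) to rule out non-trivial correspondences. This is not a mere bookkeeping issue: the \textsc{sum}-type spaces satisfy $T=2c_1c_2\nu_3T_3$, so the ``$T$ collapses to one summand'' heuristic you invoke does apply to them, yet they must still be excluded from any genuine correspondence.

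A second, smaller gap: in the case where all three $a_\alpha$ are distinct you never derive the third constraint $\sum a_\alpha^2\nu_\alpha^2=\sum b_\alpha^2\widetilde\nu_\alpha^2$ (coming from $\|\widetilde T\|^2=\|T\|^2$). The paper uses all three equations to form a Vandermonde system, from which one deduces that $\{a_1,a_2,a_3\}=\{b_1,b_2,b_3\}$ and $\widetilde\nu_\alpha^2=\nu_\alpha^2$; only then does the orientation hypothesis (2), together with an argument via~\eqref{eqn:gaussdata:eqn3} and a divergence computation, force $\sin\theta=0$ and ultimately triviality. Your sketch (``the matching must respect the privileged frame'') does not supply this, and differentiating $\widetilde T=\Rot_\theta(T)$ as you propose would require controlling $\widetilde T_\alpha$ in terms of $T_\alpha$, which is not yet available at that stage. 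Finally, two minor corrections: the Gauss identity should read $H^2-\sum a_\alpha\nu_\alpha^2=\widetilde H^2-\sum\widetilde a_\alpha\widetilde\nu_\alpha^2$ (sign), and the Riemannian Daniel invariant is $\kappa-4\tau^2$, not $\kappa+4\tau^2$.
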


The rest of the paper treats the proof of Theorem~\ref{thm:lawson}. We will begin by developing the implications of the rotation of the traceless operators of two corresponding immersions $\phi:\Sigma\to G$ and $\widetilde\phi:\Sigma\to\widetilde G$, some of which have been already shown in Lemma~\ref{lemma:traceless}. In what follows, all symbols with tilde will represent elements computed with respect to $\widetilde\phi$ or $\widetilde G$, in contrast with the same symbols without tilde that indicate they belong to $\phi$ or $G$, respectively. Since the correspondence must apply to all $H$-surfaces in $G$, we will further assume that the left-invariant Gauss map $g=(\nu_1,\nu_2,\nu_3):\Sigma\to\mathbb{S}^2$ has maximal rank $2$ (i.e., it is a local diffeomorphism). In other words, we can discard cases that lead necessarily to rank of $g$ being less than or equal to $1$, for this means the correspondence does not apply to all $H$-surfaces. By the same reason, we will also use the fact that $(\nu_1,\nu_2,\nu_3)$ and $(\widetilde\nu_1,\widetilde\nu_2,\widetilde\nu_3)$ can be potentially any point of $\mathbb S^2$. 

Firstly, Gauss equation~\ref{eqn:comp-i} gives $K=\det(S)-\sum_{i=1}^3 a_i \nu_i^2$, which must be equal to $\det(\widetilde{S})-\sum_{i=1}^3 \widetilde{a}_i \widetilde{\nu}_i^2$ since $K$ is the intrinsic curvature of $\Sigma$. By item (a) of Lemma~\ref{lemma:traceless}, we get
\begin{equation}\label{eqn:vandermonde:eqn1}
H^2 - \sum_{i=1}^3 a_i \nu_i^2 = \widetilde{H}^2- \sum_i \widetilde{a}_i \widetilde{\nu}_i^2,
\end{equation}
where the constants $a_i$ are defined in~\eqref{eqn:ai}. If we denote $\chi=\widetilde{H}^2 - H^2$ and use the fact that $\sum_{i=1}^3\widetilde\nu_i^2=1$, then Equation~\eqref{eqn:vandermonde:eqn1} can be rewritten as
\begin{equation}\label{eqn:vandermonde:eqn2}
 	\sum_{i=1}^3 a_i\nu_i^2 = \sum_{i=1}^3 (\widetilde{a}_i-\chi)\widetilde{\nu}_i^2.
\end{equation}

Secondly, by item (b) of Lemma~\ref{lemma:traceless} (here we are actually using that $H$ and $\widetilde{H}$ are constants) and~\eqref{eqn:Rot-matrix}, we can write Codazzi equation~\ref{eqn:comp-ii} as
\begin{equation}\label{eqn:vandermonde:eqn3}\langle X,\Rot_\theta(T)\rangle Y-\langle Y,\Rot_\theta(T)\rangle X=\langle X,\widetilde T\rangle Y-\langle Y,\widetilde T\rangle X,
\end{equation}
for all $X,Y\in\mathfrak{X}(\Sigma)$, where $T=\sum_{i=1}^3a_i\nu_iT_i$ is the vector field given by~\eqref{eqn:T}. By plugging $X=e_1$ and $Y=e_2$ into~\eqref{eqn:vandermonde:eqn3}, where $\{e_1,e_2\}$ is an orthonormal frame of $\Sigma$, we get $\widetilde T=\Rot_\theta(T)$. In particular, the weaker condition $\|\widetilde{T}\|^2 = \|T\|^2$ yields $\sum_{i,j=1}^3 a_ia_j\nu_i\nu_j\langle T_i,T_j\rangle=\sum_{i,j=1}^3 \widetilde a_i\widetilde a_j\widetilde \nu_i\widetilde \nu_j\langle \widetilde T_i,\widetilde T_j\rangle$, which can be simplified by means of the algebraic relations~\ref{eqn:comp-iii} to
\begin{equation}\label{eqn:vandermonde:eqn4}
\sum_{i=1}^3 a_i^2\nu_i^2- \left(\sum_{i=1}^3 a_i\nu_i^2\right)^2 = \sum_{i=1}^3 \widetilde{a}_i^2\widetilde{\nu}_i^2 - \left(\sum_{i=1}^3 \widetilde{a}_i \widetilde{\nu}_i^2 \right)^2.
\end{equation}
Using~\eqref{eqn:vandermonde:eqn1}, after some manipulations~\eqref{eqn:vandermonde:eqn4} can be expressed as
\begin{equation}\label{eqn:vandermonde:eqn5}
\sum_{i=1}^3 a_i^2\nu_i^2 = \sum_{i=1}^3 (\widetilde{a}_i-\chi)^2\widetilde{\nu}_i^2.
\end{equation}

Thirdly and lastly, we have $\sum_{i=1}^3\nu_i^2=\sum_{i=1}^3\widetilde\nu_i^2=1$. This, combined with~\eqref{eqn:vandermonde:eqn2} and~\eqref{eqn:vandermonde:eqn5}, implies that the corresponding angle functions must satisfy the following system of equations, where we denote $b_i=\widetilde a_i-\chi$ for simplicity:
\begin{equation}\label{eqn:vandermonde:eqn6}
	1=\sum_{i=1}^3 \nu_i^2 = \sum_{i=1}^3 \widetilde{\nu}_i^2, \quad \sum_{i=1}^3 a_i\nu_i^2 = \sum_{i=1}^3 b_i\widetilde{\nu}_i^2, \quad \sum_{i=1}^3 a_i^2\nu_i^2 = \sum_{i=1}^3 b_i^2\widetilde{\nu}_i^2.
\end{equation}
These are linear relations in the squared angles $x_i = \nu_i^2$ and $\widetilde{x}_i = \widetilde{\nu}_i^2$ such that, when expressed in matrix form, display the following Vandermonde style:
\begin{equation}\label{eqn:vandermonde:eqn7}
\begin{pmatrix}
	1 & 1 & 1\\
	a_1 & a_2 & a_3\\
	a_1^2 & a_2^2 & a_3^2
\end{pmatrix}
\begin{pmatrix}
	x_1\\
	x_2\\
	x_3
\end{pmatrix}=\begin{pmatrix}
	1 & 1 & 1\\
	b_1 & b_2 & b_3\\
	b_1^2 & b_2^2 & b_3^2
\end{pmatrix}\begin{pmatrix}
\widetilde{x}_1\\
\widetilde{x}_2\\
\widetilde{x}_3
\end{pmatrix}.
\end{equation}
Let $A$ and $B$ be the $3\times 3$ matrices in the left-hand side and the right-hand side of~\eqref{eqn:vandermonde:eqn7}. If $A$ is regular, then so is $B$, for otherwise we deduce that $(x_1,x_2,x_3)^t=A^{-1}B(\widetilde x_1,\widetilde x_2,\widetilde x_3)^t$ lies in a proper linear subspace of $\R^3$, which violates our hypothesis that $g$ has maximal rank. (This argument actually shows that $A$ and $B$ must have the same rank.) Accordingly, we shall distinguish two cases depending on whether or not $A$ and $B$ are invertible.

\subsection{The non-degenerate case.} If both matrices in~\eqref{eqn:vandermonde:eqn7} are regular, the $x_i$ determine the $\widetilde x_i$ and viceversa. More precisely, if we consider the affine plane
\[\Delta = \{(x,y,z)\in \mathbb{R}^3: x + y + z = 1\},\]
the identity~\eqref{eqn:vandermonde:eqn7} can be expressed as an linear isomorphism $F:\R^3\rightarrow\R^3$ that preserves $\Delta$ by sending $(x_1, x_2, x_3)$ to $(\widetilde{x}_1, \widetilde{x}_2, \widetilde{x}_3)$. Furthermore, $F$ must induce a bijection from the set of points of $\Delta$ in first octant onto itself. This is because the $x_i$ and the $\widetilde x_i$ are potentially arbitrary non-negative values that add up to $1$. This means that the matrices that represent $F$ and $F^{-1}$ (i.e., $A^{-1}B$ and $B^{-1}A$) must have non-negative entries. Since these matrices are inverse, it easily follows that $F$ is just a permutation of axes, that is, $A$ and $B$ can be obtained from each other by a permutation of their columns, or equivalently the $a_i$ are a permutation of the $b_i$. There is no loss of generality if we rearrange the structure constants in $\widetilde G$ in order to assume that $a_i=b_i$ for all $i\in\{1,2,3\}$, whence $F$ is the identity map and consequently $\widetilde\nu_i^2=\nu_i^2$ for all $i\in\{1,2,3\}$.

Since we can change the signs of any two of the $\nu_i$ by composition of $\phi$ with an isometry of $G$ that preserves the ambient orientation and $T$ (see Remark~\ref{rmk:stabilizer-data}), we can further assume that either $\widetilde\nu_i=\nu_i$ or $\widetilde\nu_i=-\nu_i$ for all $i\in\{1,2,3\}$. However, the latter is not admissible since it contradicts our hypothesis (2) of a Lawson-type correspondence (the left-invariant Gauss maps are related by the antipodal map, which reverses the orientation of $\mathbb{S}^2$).

On the one hand, it follows from the algebraic relations~\ref{eqn:comp-iii} that $\langle\widetilde T_i,\widetilde T_j\rangle=\langle T_i,T_j\rangle$ for all $i,j\in\{1,2,3\}$, and this means that there is an field of isometric operators $R$ such that $R(T_i)=\widetilde T_i$. On the other hand, the vector fields defined by~\eqref{eqn:Xi} only depend on the angle functions and $J$, so they satisfy $\widetilde X_i=X_i$. This also means that the functions defined by~\eqref{eqn:gaussdata:eqn2} satisfy $\widetilde\psi=\psi$. Since we are assuming that the Gauss map $g$ has rank $2$, Proposition~\ref{prop:Hzeta0} tells us that $\psi$ and $\widetilde\psi$ are nowhere zero, whence~\eqref{eqn:gaussdata:eqn3} implies that $R$ is actually a rotation at each tangent plane of $\Sigma$. (This argument is rather similar to that of the proof item (c) of Theorem~\ref{thm:angles}, but now we have to deal with two possibly different ambient spaces.) Since $\sum_{i=1}^3\widetilde \nu_i\widetilde T_i=0$ and $\widetilde\nu_i=\nu_i$, we get
\begin{equation}\label{eqn:lawson:eqn1}
\Rot_\theta(T)=\widetilde T=\sum_{i=1}^3\widetilde a_i\widetilde\nu_i\widetilde T_i=\sum_{i=1}^3(a_i+\chi)\widetilde\nu_i\widetilde T_i=\sum_{i=1}^3a_i\nu_iR(T_i)= R(T),
\end{equation}
whence we get $R=\Rot_\theta$ at points with $T\neq 0$ (which can be assumed by Lemma~\ref{lem:T0} below). Using~\eqref{eqn:gaussdata:eqn1} and the fact that $\sum_{i=1}^3\nu_iX_i=0$, we get 
	\begin{align}
		\sum_{i=1}^3a_i\nu_i X_i&=\sum_{i=1}^3a_i\nu_i\widetilde X_i=\sum_{i=1}^3(\widetilde a_i-\chi)\widetilde \nu_i\widetilde X_i=\sum_{i=1}^3\widetilde a_i\widetilde \nu_i\widetilde X_i\notag\\
		&=-2\widetilde HJ\widetilde T+\widetilde\zeta\widetilde T=\Rot_\theta(-2\widetilde HJ T+\widetilde\zeta T)\notag\\
		&=\cos\theta(-2\widetilde HJ T+\widetilde\zeta T)+\sin\theta(2\widetilde HT+\widetilde\zeta JT).\label{eqn:lawson:eqn2}
	\end{align}
	We can also compute in a different way $\sum_{i=1}^3a_i\nu_i X_i=-2 HJ T+\zeta T$ so a comparison with~\eqref{eqn:lawson:eqn2}  yields the system of equations
	\begin{equation}\label{eqn:lawson:eqn3}
	\left.\begin{array}{r}
		\zeta=\widetilde\zeta\cos\theta+2\widetilde H\sin\theta\\
		2H=2\widetilde H\cos\theta-\widetilde\zeta\sin\theta
	\end{array}\right\}.
	\end{equation}
	We deduce that $\sin\theta=0$, for otherwise the second equation in~\eqref{eqn:lawson:eqn3} implies that $\widetilde\zeta$ must be constant and this would impose a restriction on $\nu_1,\nu_2,\nu_3$ by~\eqref{eqn:vandermonde:eqn7} which contradicts our assumption that the Gauss map $g$ has maximal rank. By looking at~\eqref{eqn:lawson:eqn3} again, the condition $\sin\theta=0$ gives $\widetilde H=\pm H$ (so that $\chi=0$), whence $\widetilde a_i=a_i$ for all $i\in\{1,2,3\}$. This yields $\widetilde\mu_i\widetilde\mu_j=\mu_i\mu_j$ for all $i\neq j$. Since we are assuming that no two of the $a_i$ are equal, it follows that $\widetilde\mu_i=\pm\widetilde\mu_i$ for all $i\in\{1,2,3\}$, or equivalently $\widetilde c_i=\pm c_i$ for all $i\in\{1,2,3\}$ (the same choice of sign for all $i$). Therefore, we can assume that $\widetilde G=G$ since changing the signs of all $c_i$ gives an isometric metric Lie group. Now we have $\widetilde\zeta=\zeta$, so the first equation in~\eqref{eqn:lawson:eqn3} gives $\theta=0$, and~\eqref{eqn:lawson:eqn1} implies that $\Rot_\theta=\id$ at all points. In particular, $\widetilde T_i=T_i$ for all $i\in\{1,2,3\}$. By Theorem~\ref{thm:fundamental}, we get that $\phi$ and $\widetilde\phi$ (after all the aforesaid normalizations) differ by a left-translation. so it is a trivial correspondence.

\begin{lemma}\label{lem:T0}
	If $G$ has not constant curvature, then the rank of the left-invariant Gauss map of an isometric immersion $\phi:\Sigma\to G$ such that $T\equiv 0$ is at most $1$.
\end{lemma}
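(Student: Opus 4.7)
The plan is to turn the pointwise condition $T\equiv 0$ into algebraic constraints on the angle functions $\nu_1,\nu_2,\nu_3$, and then argue that if $g$ had rank $2$ at some point these constraints would force $G$ to have constant sectional curvature, contradicting the hypothesis. Throughout, we work in the Riemannian setting $\epsilon_1=\epsilon_2=\epsilon_3=1$, which is the context of Section~\ref{sec:lawson}.

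First I would use the second formula for $T$ in~\eqref{eqn:T}. Assuming $T\equiv 0$ gives
\[
	\mu_2\mu_3\,\nu_1 T_1 + \mu_3\mu_1\,\nu_2 T_2 + \mu_1\mu_2\,\nu_3 T_3 \equiv 0.
\]
Taking inner product with $T_i$ for $i\in\{1,2,3\}$ and simplifying with the algebraic relation $\langle T_i,T_j\rangle=\delta_i^j-\nu_i\nu_j$ from~\ref{eqn:comp-iii}, each resulting equation factors as
\[
	\nu_1(\mu_2\mu_3-\varrho)=0,\qquad \nu_2(\mu_3\mu_1-\varrho)=0,\qquad \nu_3(\mu_1\mu_2-\varrho)=0,
\]
where $\varrho=\mu_2\mu_3\,\nu_1^2+\mu_3\mu_1\,\nu_2^2+\mu_1\mu_2\,\nu_3^2$. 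These are the same relations that appear in the proof of Theorem~\ref{thm:tot-geodesic}.

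Next, I would argue by contradiction. Suppose $g=(\nu_1,\nu_2,\nu_3)$ has rank $2$ at some point $p\in\Sigma$. By lower semicontinuity of the rank, $g$ is a local diffeomorphism on some open neighbourhood $U$ of $p$, so $g(U)$ is open in $\mathbb{S}^2$. In particular, for each $i$, the preimage $g^{-1}(\{x_i=0\}\cap\mathbb{S}^2)\cap U$ is a one-dimensional submanifold, so the open set
\[
	V=U\setminus\bigcup_{i=1}^3 g^{-1}(\{x_i=0\})
\]
is dense in $U$ and none of the $\nu_i$ vanish on $V$. The three boxed equalities above then force $\mu_2\mu_3=\mu_3\mu_1=\mu_1\mu_2=\varrho$ pointwise on $V$. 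The first two equalities, however, are relations between the fixed constants $\mu_1,\mu_2,\mu_3$, so they hold globally.

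Finally, by Proposition~\ref{prop:dim-iso-4-6} (see Equation~\eqref{eqn:csc2} in its proof), the identity $\mu_1\mu_2=\mu_2\mu_3=\mu_3\mu_1$ is precisely the condition characterising constant sectional curvature of $G$. This contradicts the standing assumption that $G$ does not have constant curvature, so $\mathrm{rank}(g)\le 1$ at every point of $\Sigma$. The argument is essentially algebraic and I do not foresee any real obstacle; the one step deserving care is the openness/density of $V$, which is precisely what lets us upgrade the pointwise factorisations into identities between the structural constants.
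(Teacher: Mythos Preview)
Your proposal is correct and follows essentially the same approach as the paper's own proof: both derive the factorised relations $\nu_i(\mu_j\mu_k-\varrho)=0$ by mimicking the first part of Theorem~\ref{thm:tot-geodesic}, and then observe that if the Gauss map had maximal rank one could find points where no $\nu_i$ vanishes, forcing $\mu_1\mu_2=\mu_2\mu_3=\mu_3\mu_1$ and hence constant curvature by Proposition~\ref{prop:dim-iso-4-6}. Your version is slightly more explicit about the openness/density argument for locating such points, but the strategy is identical.
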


\begin{proof}
We can mimick the first part of proof of Theorem~\ref{thm:tot-geodesic} to reach~\eqref{thm:tot-geodesic:eqn2}. If no angle function vanishes, then $\mu_1\mu_2=\mu_2\mu_3=\mu_1\mu_3$ and $G$ has constant curvature; otherwise, the Gauss map clearly does not have maximal rank.
\end{proof}

\subsection{The degenerate case}

We will now assume that the matrices in~\eqref{eqn:vandermonde:eqn7} are not invertible. First, notice that their common rank equals $1$ if and only if $a_1=a_2=a_3$ and $b_1=b_2=b_3$, so that $G$ and $G'$ have constant curvature by Proposition~\ref{prop:dim-iso-4-6} and the rotation of the traceless operator characterizes the classical Lawson correspondence (the only relevant fundamental datum is $S$ whilst Gauss and Codazzi are in turn the only fundamental equations). Accordingly, we can further assume that the rank equals $2$, so that exactly two of the $a_i$ and two of the $b_i$ coincide. We can relabel indexes to assume $a_1=a_2\neq a_3$ and $b_1=b_2\neq b_3$. Let us make two observations about these conditions.

On the one hand, the left-hand side of~\eqref{eqn:vandermonde:eqn7} ranges over the segment with endpoints $(1,a_1,a_1^2)^t$ and $(1,a_3,a_3^2)^t$, whereas the right-hand side ranges over the segment with endpoints $(1,b_1,b_1^2)^t$ and $(1,b_3,b_3^2)^t$ as both $(x_1,x_2,x_3)$ and $(\widetilde x_1,\widetilde x_2,\widetilde x_3)$ run over the triples of positive numbers with sum $1$. This implies that $\{b_1,b_3\}$ must be a permutation of $\{a_1,a_3\}$. The second equation of~\eqref{eqn:vandermonde:eqn6} can be expressed as
\begin{equation}\label{eqn:segment-degenerate}
a_1+(a_3-a_1)\nu_3^2=b_1+(b_3-b_1)\widetilde\nu_3^2.
\end{equation}
We have two options in view of~\eqref{eqn:segment-degenerate}: either $\widetilde\nu_3^2=\nu_3^2$ if $(a_1,a_3)=(b_1,b_3)$ or $\widetilde\nu_3^2=1-\nu_3^2$ if $(a_1,a_3)=(b_3,b_1)$.

On the other hand, $a_1=a_2$ is equivalent to $(\mu_1-\mu_2)\mu_3=0$. If $\mu_1=\mu_2$ we get $\E(\kappa,\tau)$-spaces with $c_1=c_2=\frac{\kappa}{2\tau}$ and $c_3=2\tau\neq 0$ (we have to exclude $c_3=0$ since it leads to $a_1=a_2=a_3$), whereas $\mu_3=0$ leads to metric Lie groups with $c_3=c_1+c_2$. We will say $G$ is of \textsc{bcv}-type or \textsc{sum}-type, respectively. Both types of metric Lie groups have in common the interesting property that the vector field $T$ defined by~\eqref{eqn:T} only depends on $T_3$. More specifically, we have 
\begin{equation}\label{eqn:T-degenerate}
T =\begin{cases} c_3(c_3-c_1)\nu_3T_3&\text{if $G$ is of \textsc{bcv}-type},\\
2 c_1 c_2 \nu_3 T_3& \text{if $G$ is of \textsc{sum}-type}.
\end{cases}
\end{equation}
We will analyze each of the three possible scenarios depending on which types of metric Lie groups we would like to match by a potential Lawson-type correspondence. For each case, we also have to analyze the two options $(a_1,a_3)=(b_1,b_3)$ and $(a_1,a_3)=(b_3,b_1)$ we have discussed previously. Next lemma will simplify some of the forthcoming calculations (actually, it holds for any Riemannian unimodular metric Lie group, not only for those of \textsc{bcv}-type or \textsc{sum}-type).

\begin{lemma}\label{lemma:T-divergence}~
\begin{enumerate}[label=\emph{(\alph*)}]
	\item $\operatorname{div}(T_3)=2H\nu_3+(c_2-c_1)\nu_1\nu_2$,
	\item $\operatorname{div}(JT_3)=c_3\nu_3$.
\end{enumerate}\end{lemma}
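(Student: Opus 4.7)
The plan is to compute both divergences directly from the compatibility equations of Proposition~\ref{prop:compatibility}, specialized to the Riemannian case $\epsilon_1=\epsilon_2=\epsilon_3=\hat\epsilon_1=\hat\epsilon_2=\hat\epsilon_3=1$, working in a positive orthonormal frame $\{e_1,e_2\}$ of $\Sigma$.

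For item (a), I would specialize the second equation of~\ref{eqn:comp-iv} to obtain
\[\nabla_X T_3 = \nu_3\,SX + \mu_2\langle X,T_2\rangle T_1 - \mu_1\langle X,T_1\rangle T_2,\]
take $X=e_i$, pair with $e_i$ and sum. The outcome is
\[\operatorname{div}(T_3)=\nu_3\trace(S)+(\mu_2-\mu_1)\langle T_1,T_2\rangle.\]
Using $\trace(S)=2H$, the algebraic relation~\ref{eqn:comp-iii} giving $\langle T_1,T_2\rangle=-\nu_1\nu_2$, and the identity $\mu_2-\mu_1=c_1-c_2$, this simplifies immediately to the claimed $\operatorname{div}(T_3)=2H\nu_3+(c_2-c_1)\nu_1\nu_2$.

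For item (b), I would exploit the fact (recalled in the proof of Lemma~\ref{lemma:traceless}) that $J$ is parallel on $\Sigma$, so that $\operatorname{div}(JT_3)=-\sum_{i=1}^2\langle\nabla_{e_i}T_3,Je_i\rangle$. Inserting the formula for $\nabla_X T_3$ produces three sums. The $S$-term $\sum_i\langle Se_i,Je_i\rangle$ vanishes by the symmetry of $S$ combined with the skew-symmetry of $J$ in the Riemannian setting. The remaining two sums collapse via the elementary identity $\sum_i\langle X,e_i\rangle Je_i=JX$ into $\mu_2\langle T_1,JT_2\rangle-\mu_1\langle T_2,JT_1\rangle$, which can be evaluated using~\eqref{eqn:JEi} and~\ref{eqn:comp-iii}: a short computation with $JT_2=\nu_1T_3-\nu_3T_1$ and $JT_1=\nu_3T_2-\nu_2T_3$ yields $\langle T_1,JT_2\rangle=-\nu_3$ and $\langle T_2,JT_1\rangle=\nu_3$. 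Substituting and invoking $\mu_1+\mu_2=c_3$ gives $\operatorname{div}(JT_3)=(\mu_1+\mu_2)\nu_3=c_3\nu_3$.

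There is no substantive obstacle in this argument—the lemma is essentially a direct consequence of~\ref{eqn:comp-iv} and elementary linear algebra on the tangent plane. The only care required is in sign bookkeeping and in verifying that $\nabla J=0$ so that the rotation commutes with divergence; the latter is already justified earlier in the paper.
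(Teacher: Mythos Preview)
Your argument is correct and complete, but it differs from the paper's proof in a meaningful way. The paper works extrinsically: it picks an orthonormal frame $\{e_1,e_2\}$ of \emph{principal directions}, writes $T_3=E_3-\nu_3 N$ and $JT_3=N\times E_3$, and computes both divergences by expanding $\overline\nabla_{e_i}E_3$ and $\overline\nabla_{e_i}(N\times E_3)$ directly from the ambient Levi--Civita connection~\eqref{eqn:nabla-unimodular}. In part (b), the vanishing of the first sum in~\eqref{lemma:T-divergence:eqn2} uses explicitly that $\overline\nabla_{e_i}N$ is proportional to $e_i$, which is where the principal-direction choice enters. Your route is purely intrinsic: you feed the compatibility equation~\ref{eqn:comp-iv} for $\nabla_X T_3$ (which already packages the ambient information) into the divergence and trace out. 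This avoids any special choice of frame and makes the computation shorter; the trade-off is that it relies on Proposition~\ref{prop:compatibility} having been established, whereas the paper's proof is self-contained from the ambient connection.

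One minor sign bookkeeping issue in part (b): when you collapse the two remaining sums, the expression should read $-\mu_2\langle T_1,JT_2\rangle+\mu_1\langle T_2,JT_1\rangle$ rather than $\mu_2\langle T_1,JT_2\rangle-\mu_1\langle T_2,JT_1\rangle$ (the overall minus sign from $\operatorname{div}(JT_3)=-\sum_i\langle\nabla_{e_i}T_3,Je_i\rangle$ survives). Since $\langle T_2,JT_1\rangle=-\langle T_1,JT_2\rangle$, both expressions are $\pm(\mu_1+\mu_2)\langle T_1,JT_2\rangle$, and with $\langle T_1,JT_2\rangle=-\nu_3$ your stated conclusion $(\mu_1+\mu_2)\nu_3$ is the correct one; just be careful that the intermediate line matches it.
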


\begin{proof}
Let $\{e_1,e_2\}$ be a orthonormal frame consisting of principal directions in $\Sigma$ such that $\{e_1,e_2,N\}$ is positively oriented. On the one hand, we have
\begin{equation}\label{lemma:T-divergence:eqn1}
	\operatorname{div}(T)=\sum_{i=1}^2\langle\nabla_{e_i}T_3,e_i\rangle=\sum_{i=1}^2\langle\overline\nabla_{e_i}E_3,e_i\rangle-\nu_3\sum_{i=1}^2\langle\nabla_{e_i}N,e_i\rangle.
\end{equation}
The last sum clearly equals $-2H$, whereas the other term in the right-hand side of~\eqref{lemma:T-divergence:eqn1} swiftly gives $(c_2-c_1)\nu_1\nu_2$ when expanding $\overline\nabla_{e_i}E_3=\sum_{k=1}^2\langle e_i,E_k\rangle\overline\nabla_{E_k}E_3$ and using the ambient connection in~\eqref{eqn:nabla-unimodular}. On the other hand, we can compute
\begin{equation}\label{lemma:T-divergence:eqn2}
\operatorname{div}(JT)=\sum_{i=1}^2\langle\overline\nabla_{e_i}(N\times T_3),e_i\rangle=\sum_{i=1}^2\langle\overline\nabla_{e_i}N\times E_3,e_i\rangle+\sum_{i=1}^2\langle N\times\overline\nabla_{e_i} E_3,e_i\rangle.
\end{equation}
The first sum in the right-hand side of~\eqref{lemma:T-divergence:eqn2} vanishes since $\overline\nabla_{e_i}N$ is proportional to $e_i$, whereas the second sum equals $c_3\nu_3$ by expanding each $\overline\nabla_{e_i}E_3$ as above.
\end{proof}

\subsubsection{Case \textsc{bcv-bcv}} Let $c_1=c_2=\frac{\kappa}{2\tau}$ and $\widetilde c_1=\widetilde c_2=\frac{\widetilde\kappa}{2\widetilde\tau}$ with $c_3=2\tau\neq 0$ and $\widetilde c_3=2\widetilde\tau\neq 0$, i.e., we have a correspondence between groups of \textsc{bcv}-type. This implies that $a_1=a_2=-\tau^2$ and $a_3=-\kappa+3\tau^2$ (and likewise $b_1=b_2=-\widetilde\tau^2-\chi$ and $b_3=-\widetilde\kappa+3\widetilde\tau^2-\chi$ with $\chi=\widetilde H^2-H^2$).

Assume first that $(a_1,a_3)=(b_3,b_1)$. We claim this does not lead to any correspondence. Since $a_3-a_1=b_1-b_3$, we infer that $\widetilde\kappa-4\widetilde\tau^2=-(\kappa-4\tau^2)\neq 0$ ($G$ and $\widetilde G$ do not have constant curvature). Consequently, we can rewrite $\widetilde T=\Rot_\theta(T)$ as $\widetilde\nu_3\widetilde T_3=-\nu_3\Rot_\theta(T_3)$. This subcase gives $\widetilde\nu_3^2=1-\nu_3^2$, whence $\widetilde\nu_3\nabla\widetilde\nu_3=-\nu_3\nabla\nu_3$. Expanding out $\widetilde\nu_3\nabla\widetilde\nu_3$ by the fundamental equation~\ref{eqn:comp-v}, we reach 
\begin{align}
	\langle\widetilde\nu_3\nabla\widetilde\nu_3,X\rangle
	&=-\widetilde\nu_3\langle\widetilde S\widetilde T_3+\widetilde\tau J\widetilde T_3,X\rangle
	=-\widetilde\nu_3\langle\widetilde S X-\widetilde\tau JX, \widetilde T_3\rangle\notag\\
	&=\nu_3\langle\Rot_\theta(SX-HX)+\widetilde HX-\widetilde\tau JX, \Rot_\theta(T_3)\rangle\notag\\
	&=\nu_3\langle SX-HX,T_3\rangle+\nu_3\langle\widetilde HX-\widetilde\tau JX,\Rot_\theta(T_3)\rangle,\notag\\
	&=\nu_3\langle ST_3-HT_3,X\rangle+\nu_3\langle\widetilde H\Rot_\theta(T_3)+\widetilde\tau J\Rot_\theta(T_3),X\rangle,\label{eqn:first-case:eqn1}
\end{align}
for all tangent vector fields $X$. However, we can also compute $\langle\widetilde\nu_3\nabla\widetilde\nu_3,X\rangle=\langle-\nu_3\nabla\nu_3,X\rangle=\nu_3\langle ST_3+\tau JT_3,X\rangle$. When comparing this and~\eqref{eqn:first-case:eqn1}, we can get rid of the term $ST_3$ and reach the following non trivial condition:
\begin{equation}\label{eqn:first-case:eqn2}
\nu_3\Rot_\theta(\widetilde HT_3+\widetilde\tau JT_3)=\nu_3(HT_3+\tau JT_3).
\end{equation}
Working around some point where $\nu_3\neq 0$ and $\nu_3\neq 1$, we can take squared norms in~\eqref{eqn:first-case:eqn2} to get $\widetilde H^2+\widetilde\tau^2=H^2+\tau^2$. An easy algebraic manipulation of the conditions $a_1=b_3$ and $a_3=b_1$ now yields $\widetilde\kappa-4\widetilde\tau^2=\kappa-4\tau^2=0$, which is a contradiction and the claim is proved.

Therefore, we have the other subcase $(a_1,a_3)=(b_1,b_3)$ with $\widetilde\nu_3^2=\nu_3^2$. This means that $\widetilde H^2+\widetilde\tau^2=H^2+\tau^2$ and $\widetilde\kappa-4\widetilde\tau^2=\kappa-4\tau^2$. We can work in an open subset where $\nu_3\neq 0$ and apply an isometry to $\widetilde\phi$ to further assume that $\widetilde\nu_3=\nu_3$ (see Remark~\ref{rmk:stabilizer-data}), so that $\widetilde T=\Rot_\theta(T)$ reads $\widetilde T_3=\Rot_\theta(T_3)$. Taking divergences in both $\widetilde T_3=\Rot_\theta(T_3)$ and $J\widetilde T_3=\Rot_\theta(JT_3)$ by means of Lemma~\ref{lemma:T-divergence}, we find the following two equations:
\begin{align*}
2\widetilde H\nu_3&=\operatorname{div}(\widetilde T_3)=\cos\theta\operatorname{div}(T_3)+\sin\theta\operatorname{div}(JT_3)=2H\nu_3\cos\theta+2\tau\nu_3\sin\theta,\\
2\widetilde\tau\nu_3&=\operatorname{div}(J\widetilde T_3)=\cos\theta\operatorname{div}(JT_3)-\sin\theta\operatorname{div}(T_3)=2\tau\nu_3\cos\theta-2H\nu_3\sin\theta.
\end{align*}
This agrees with~\eqref{lemma:traceless:eqn1} and implies that $\widetilde H+i\widetilde\tau=e^{-i\theta}(H+i\tau)$. All in all, we recover the Daniel correspondence in this case.

\subsubsection{Case \textsc{sum-sum}} Assume that $c_3=c_1+c_2$ and $\widetilde c_3=\widetilde c_1+\widetilde c_2$, i.e., we have a correspondence between two metric Lie groups of \textsc{sum}-type. We have $a_1=a_2=-c_1c_2$ and $a_3=c_1c_2$ (and likewise $b_1=b_2=-\widetilde c_1\widetilde c_2-\chi$ and $b_3=\widetilde c_1\widetilde c_2-\chi$). 

Assume first that $(a_1,a_3)=(b_1,b_3)$. This swiftly gives $\widetilde H^2=H^2$ and $\widetilde c_1\widetilde c_2=c_1c_2$. We can change $\widetilde\phi$ by an ambient isometry to further assume $\widetilde\nu_3=\nu_3$ as in the above case \textsc{bcv-bcv}, which does not alter $\widetilde T$. Since $c_1c_2\neq 0$ (otherwise, we have $\widetilde c_1\widetilde c_2=0$ and we can indeed reduce to the case \textsc{bcv-bcv}), the condition $\widetilde T=\Rot_\theta(T)$ amounts to $\widetilde T_3=\Rot_\theta(T_3)$ at points with $\nu_3\neq 0$ by~\eqref{eqn:T-degenerate}. By expanding the identity $\langle\nabla\widetilde\nu_3,X\rangle=\langle\nabla\nu_3,X\rangle$ in the spirit of~\eqref{eqn:first-case:eqn1} and using~\ref{eqn:comp-v}, we obtain
\begin{equation}\label{eqn:second-case:eqn1}
\widetilde c_2\widetilde\nu_2\widetilde T_1-\widetilde c_1\widetilde\nu_1\widetilde T_2-\widetilde H\widetilde T_3= c_2\nu_2 T_1- c_1\nu_1 T_2- H T_3.
\end{equation}
The divergence trick (similar to the case \textsc{bcv-bcv}) gives by Lemma~\ref{lemma:T-divergence}
\begin{equation}\label{eqn:second-case:eqn2}
	\begin{aligned}
2\widetilde H\nu_3+(\widetilde c_2-\widetilde c_1)\widetilde\nu_1\widetilde\nu_2&=(2H\nu_3+(c_2-c_1)\nu_1\nu_2)\cos\theta+(\mu_1+\mu_2)\nu_3\sin\theta,\\
\widetilde c_3\nu_3&=c_3\nu_3\cos\theta-(2H\nu_3+(c_2-c_1)\nu_1\nu_2)\sin\theta.
\end{aligned}
\end{equation}
The second equation in~\eqref{eqn:second-case:eqn2} implies that the rank of the Gauss map is at most $1$ unless $\sin\theta=0$. Therefore, we can assume that $\theta=0$ or $\theta=\pi$. 
\begin{itemize}
	\item If $\theta=0$, Equation~\eqref{eqn:second-case:eqn2} yields $\widetilde c_3=c_3$ and hence $\{\widetilde c_1,\widetilde c_2\}$ is a permutation of $\{c_1,c_2\}$ (both pairs have the same sum and product), so we can further assume that $\widetilde c_1=c_1$ and $\widetilde c_2=c_2$ by swapping indexes. Multiplying~\eqref{eqn:second-case:eqn1} by $\widetilde T_3=T_3$ by means of the algebraic relations~\ref{eqn:comp-iii}, we get 
\begin{equation}\label{eqn:second-case:eqn3}
(c_2-c_1)\widetilde\nu_1\widetilde\nu_2\nu_3-\widetilde H(1-\nu_3^2)=(c_2-c_1)\nu_1\nu_2\nu_3- H(1-\nu_3^2).
\end{equation}
By subtracting from~\eqref{eqn:second-case:eqn3} the first equation in~\eqref{eqn:second-case:eqn2} multiplied by $\nu_3$, we reach $\widetilde H=H$, so that $\widetilde\nu_1\widetilde\nu_2=\nu_1\nu_2$. Since we also have $\widetilde\nu_1^2+\widetilde\nu_2^2=\nu_1^2+\nu_2^2$, we can change the signs of both $\nu_1$ and $\nu_2$ (by Remark~\ref{rmk:stabilizer-data}, this does not alter $\nu_3$ or $T_3$) if necessary to assume that $\{\widetilde\nu_1,\widetilde\nu_2\}$ is a permutation of $\{\nu_1,\nu_2\}$. By the algebraic relations, $\{\widetilde T_1,\widetilde T_2\}$ must be the same permutation of $\{T_1,T_2\}$, but then~\eqref{eqn:second-case:eqn1} implies that $\widetilde T_1=T_1$ and $\widetilde T_2=T_2$ unless $c_1=-c_2$, i.e., $c_3=0$. 

All in all, if $c_3\neq 0$, we get $\widetilde G=G$ and $\widetilde T_i=T_i$, so we conclude that the Lawson-type correspondence is just the identity by Theorem~\ref{thm:fundamental}. If $c_3=0$, then $\widetilde G=G=\mathrm{Sol}_3$ equipped with the standard metric (up to an homothety), and we also have $\widetilde T_i=T_i$ after applying an isometry (this is the only space in which we have a stabilizer of eight elements, see Remark~\ref{rmk:stabilizer-data}). Either way, we get a trivial Lawson-type correspondence as expected.

\item If $\theta=\pi$, we get $\widetilde c_3=-c_3$ and hence $\{\widetilde c_1,\widetilde c_2\}$ is a permutation of $\{-c_1,-c_2\}$. Therefore, we can reason as in the case $\theta=0$ with $\widetilde T_3=-T_3$ or even reduce to the case $\theta=0$ by changing the signs of $\widetilde c_1,\widetilde c_2,\widetilde c_3$.
\end{itemize}

Assume now that $(a_1,a_3)=(b_3,b_1)$, which is equivalent to having both $\widetilde H^2=H^2$ and $\widetilde c_1\widetilde c_2=-c_1c_2\neq 0$. This case also has $\widetilde\nu_ 3^2=1-\nu_3^2$, whence $\widetilde\nu_3\nabla\widetilde\nu_3=-\nu_3\nabla\nu_3$ and $\widetilde\nu_3\widetilde T_3=-\nu_3\Rot_\theta(T_3)$ by~\eqref{eqn:T-degenerate}. The same idea that we used in~\eqref{eqn:first-case:eqn1} comes in handy to expand $\langle\widetilde\nu_3\nabla\widetilde\nu_3,X\rangle=-\langle\nu_3\nabla\nu_3,X\rangle$, resulting the identity
\begin{equation}\label{eqn:second-case:eqn4}
(\widetilde c_2\widetilde\nu_2\widetilde T_1-\widetilde c_1\widetilde\nu_1\widetilde T_2+\widetilde H\widetilde T_3)\widetilde\nu_3=-(c_2\nu_2 T_1- c_1\nu_1 T_2+H T_3)\nu_3.
\end{equation}
Using the algebraic relations~\ref{eqn:comp-iii}, we can express
\begin{equation}\label{eqn:second-case:eqn5}
c_2\nu_2T_1-c_1\nu_1T_2=\frac{(c_1-c_2)\nu_1\nu_2\nu_3}{1-\nu_3^2}T_3+\frac{c_1\nu_1^2+c_2\nu_2^2}{1-\nu_3^2}JT_3
\end{equation}
at points where $\nu_3^2\neq 1$, and the same formula~\eqref{eqn:second-case:eqn5} works with tildes if $\widetilde\nu_3^2\neq 1$, or equivalently $\nu_3\neq 0$. We can now employ~\eqref{eqn:second-case:eqn5} to write~\eqref{eqn:second-case:eqn4} as combination of $\widetilde T_3$ and $J\widetilde T_3$ by expressing $\nu_3 T_3=-\widetilde\nu_3\Rot_{-\theta}(\widetilde T_3)$ and $1-\widetilde\nu_3^2=\nu_3^2$. When matching coefficients on both sides, we can isolate two polynomials on $\widetilde\nu_1,\widetilde\nu_2,\widetilde\nu_3$:
\begin{equation}\label{eqn:second-case:eqn6}
	\begin{aligned}
	\widetilde\nu_1\widetilde\nu_2\widetilde\nu_3&=\tfrac{ H(1-\nu_3^2)+( c_1- c_2) \nu_1 \nu_2 \nu_3}{(\widetilde c_1-\widetilde c_2)(1- \nu_3^2)}\nu_3^2\cos\theta+\tfrac{( c_1\nu_1^2+ c_2\nu_2^2)\nu_3^2}{(\widetilde c_1-\widetilde c_2)(1-\nu_3^2)}\sin\theta-\tfrac{\widetilde H\nu_3^2}{\widetilde c_1-\widetilde c_2},\\
	\widetilde c_1\widetilde\nu_1^2+\widetilde c_2\widetilde\nu_2^2&=-\tfrac{ H(1-\nu_3^2)+( c_1- c_2) \nu_1 \nu_2 \nu_3}{1-\widetilde \nu_3^2}\nu_3^2\sin\theta+\tfrac{( c_1\nu_1^2+ c_2\nu_2^2)\nu_3^2}{1-\nu_3^2}\cos\theta.
\end{aligned}
\end{equation}
The second equation in~\eqref{eqn:second-case:eqn6} along with $\widetilde\nu_1^2+\widetilde\nu_2^2=\nu_3^2$ allows us to get $\widetilde\nu_1^2,\widetilde\nu_2^2,\widetilde\nu_3^2$ in terms of $\nu_1,\nu_2,\nu_3$ (note that $\widetilde c_1\neq\widetilde c_2$ since we can assume we are not in an $\E(\kappa,\tau)$-space). These expressions can be plugged into the square of the first equation in~\eqref{eqn:second-case:eqn6} to reach a degree $6$ polynomial equation $p(\nu_1,\nu_2,\nu_3)=0$ with constant coefficients depending on $c_1,c_2,H$, $\widetilde c_1,\widetilde c_2,\widetilde H$ and $\theta$. The expression of $p$ is too unwieldy to write it here, but one can check that
\begin{equation}\label{eqn:second-case:eqn7}
p(\cos t,\sin t,0)=\alpha-(c_1\cos^2t+c_2\sin^2t)(c_1\cos^2t+c_2\sin^2t+\beta),
\end{equation}
for some constants $\alpha$ and $\beta$. Since $c_1\neq c_2$, the expression~\eqref{eqn:second-case:eqn7} is never a constant function of $t$. Elementary algebraic geometry shows that $p(\nu_1,\nu_2,\nu_3)=0$ implies that the Gauss map $g$ has not maximal rank, and consequently the subcase $(a_1,a_3)=(b_3,b_1)$ does not lead to any correspondence.

\subsubsection{Case \textsc{sum-bcv}} Assume that $c_3=c_1+c_2$ and $\widetilde c_1=\widetilde c_2=\frac{\widetilde\kappa}{2\widetilde\tau}$ with $\widetilde c_3=2\widetilde\tau\neq 0$, i.e., we have a correspondence that mixes spaces of \textsc{sum}-type and \textsc{bcv}-type.

The subcase $(a_1,a_3)=(b_1,b_3)$ gives $c_1c_2+H^2=\widetilde\tau^2+\widetilde H^2$ and $-c_1 c_2+ H^2=\widetilde\kappa-3\widetilde\tau^2+\widetilde H^2$ with $\widetilde\nu_3=\nu_3$ (after a isometry as in previous cases). In particular, we get $\widetilde\kappa-4\widetilde\tau^2=-2c_1c_2\neq 0$, whence $\widetilde T=\Rot_\theta(T)$ reads $\widetilde T_3=\Rot_\theta(T_3)$ at points where $\nu_3\neq 0$. Taking the divergence by means of Lemma~\ref{lemma:T-divergence}, we get
\begin{equation}\label{eqn:third-case:eqn1}
	2 H\nu_3+(c_2-c_1)\nu_1\nu_2=2\widetilde H \nu_3\cos\theta+2\widetilde\tau \nu_3\sin\theta.
\end{equation}
Since we can suppose that $c_2\neq c_1$, we conclude from~\eqref{eqn:third-case:eqn1} that the Gauss map $g$ has rank at most $1$, so there is no correspondence in this subcase.

Finally, assume $(a_1,a_3)=(b_3,b_1)$, so that $-c_1c_2+H^2=\widetilde\tau^2+\widetilde H^2$ and $c_1 c_2+ H^2=\widetilde\kappa-3\widetilde\tau^2+\widetilde H^2$ with $\widetilde\nu_ 3^2=1-\nu_3^2$. Since this implies that $\widetilde\kappa-4\widetilde\tau^2=2c_1c_2\neq 0$, the equality $\widetilde T=\Rot_\theta(T)$ is written as $\widetilde\nu_3\widetilde T_3=-\nu_3\Rot_\theta(T)$ and we also have $\widetilde\nu_3\nabla\widetilde\nu_3=-\nu_3\nabla\nu_3$. Proceeding as in the above cases, we can expand $\langle\widetilde\nu_3\nabla\widetilde\nu_3,X\rangle+\langle\nu_3\nabla\nu_3,X\rangle=0$ by means of~\ref{eqn:comp-v} to obtain
\begin{equation}\label{eqn:third-case:eqn2}
(HT_3+c_2\nu_2 T_1-c_1\nu_1T_2)\nu_3=-(\widetilde H\widetilde T_3-\widetilde\tau J\widetilde T_3)\widetilde\nu_3.
\end{equation}
As in the case \textsc{sum-sum}, we can use~\eqref{eqn:second-case:eqn5} and the fact that $\widetilde\nu_3\widetilde T_3=-\nu_3\Rot_{-\theta}(T_3)$ to rewrite~\eqref{eqn:third-case:eqn2} as a combination of $T_3$ and $JT_3$, which gives the following coefficient identifications, which are valid at points where $\nu_3\neq 1$:
\begin{equation}
	\begin{aligned}
		\tfrac{H(1-\nu_3^2)+(c_1-c_2)\nu_1\nu_2\nu_3}{1-\nu_3^2}&=\widetilde H\cos\theta+\widetilde\tau\sin\theta,\\
		\tfrac{c_1\nu_1^2+c_2\nu_2^2}{1-\nu_3^2}&=\widetilde H\sin\theta-\widetilde\tau\cos\theta.
	\end{aligned}
\end{equation}
Any of these two equations implies once again that the Gauss map $g$ has not maximal rank, so we conclude that this subcase does not yield any Lawson-type correspondence either. This finishes the proof of Theorem~\ref{thm:lawson}.

\end{document}